\definecolor{myMaroon}{HTML}{720E0E}
\definecolor{myBlue}{HTML}{1A5276}
\definecolor{myDarkerBlue}{HTML}{154360}
\newcommandx{\cmt}[2][1=]{\todo[linecolor=red,backgroundcolor=red!25,bordercolor=red,#1]{#2}}
\numberwithin{equation}{section}
\let\eps=\varepsilon
\let\theta=\vartheta
\let\rho=\varrho
\let\sigma=\varsigma
\let\phi=\varphi
\newcommand{\sm}{\setminus}
\newcommand{\frob}{\mathsf{F}}
\newcommand{\R}{\mathbb{R}}
\newcommand{\bx}{\boldsymbol{x}}
\newcommand{\ba}{\boldsymbol{a}}
\newcommand{\bu}{\boldsymbol{u}}
\newcommand{\bv}{\boldsymbol{v}}
\newcommand{\bz}{\boldsymbol{z}}
\newcommand{\by}{\boldsymbol{y}}
\newcommand{\br}{\boldsymbol{r}}
\newcommand{\bm}{\boldsymbol{m}}
\newcommand{\bh}{\boldsymbol{h}}
\newcommand{\bmu}{\boldsymbol{\mu}}
\newcommand{\bpsi}{\boldsymbol{\psi}}
\newcommand{\bxi}{\boldsymbol{\xi}}
\newcommand{\bphi}{\boldsymbol{\phi}}
\newcommand{\supp}{\mathrm{supp}}
\DeclareMathOperator*{\argmin}{argmin}
\renewcommand{\Pr}{\mathbb{P}}
\newcommand{\Ex}{\mathbb{E}}
\newcommand{\inner}[2]{\left\langle #1, #2 \right\rangle}
\newcommand{\ind}{\mathbbm{1}}
\def\moverlay{\mathpalette\mov@rlay}
\def\mov@rlay#1#2{\leavevmode\vtop{%
   \baselineskip\z@skip \lineskiplimit-\maxdimen
   \ialign{\hfil$\m@th#1##$\hfil\cr#2\crcr}}}
\newcommand{\charfusion}[3][\mathord]{
    #1{\ifx#1\mathop\vphantom{#2}\fi
        \mathpalette\mov@rlay{#2\cr#3}
      }
    \ifx#1\mathop\expandafter\displaylimits\fi}
\newcommand{\scaps}[1]{{\scshape #1}}
\newcommand{\bscaps}[1]{\textsc{\textbf{#1}}}
\def\QED{$\blacksquare$}
\def\inQED{$\square$}
\renewenvironment{proof}
{\vspace{1ex}\noindent{\sl Proof.}\hspace{0.5em}}{\hfill \QED \vspace{1ex}}
\newenvironment{proofof}[1]
{\vspace{1ex}\noindent\scaps{Proof of #1.}\hspace{0.5em}}{\hfill \QED \vspace{1ex}}
\newcounter{thmcnt}
\newenvironment{theorem}
{
\refstepcounter{thmcnt} 
\vspace{-1ex}
\ \\
\noindent
\begin{it}
\noindent
\bscaps{Theorem~\thethmcnt.}\hspace{-0.5ex}
}
{\end{it} \vspace{1ex}}
\newenvironment{lemma}
{
\refstepcounter{thmcnt} 
\vspace{-1ex}
\ \\
\noindent
\begin{it}
\noindent
\bscaps{Lemma~\thethmcnt.}\hspace{-0.5ex}
}
{\end{it} \vspace{1ex}}
\newenvironment{observation}
{
\refstepcounter{thmcnt} 
\vspace{-1ex}
\ \\
\noindent
\begin{it}
\noindent
\bscaps{Observation~\thethmcnt.}\hspace{-0.5ex}
}
{\end{it} \vspace{1ex}}
\newenvironment{corollary}
{
\refstepcounter{thmcnt} 
\vspace{-1ex}
\ \\
\noindent
\begin{it}
\noindent
\bscaps{Corollary~\thethmcnt.}\hspace{-0.5ex}
}
{\end{it} \vspace{1ex}}
\newenvironment{proposition}
{
\refstepcounter{thmcnt} 
\vspace{-1ex}
\ \\
\noindent
\begin{it}
\noindent
\bscaps{Proposition~\thethmcnt.}\hspace{-0.5ex}
}
{\end{it} \vspace{1ex}}
\newenvironment{remark}
{
\refstepcounter{thmcnt} 
\vspace{-1ex}
\ \\
\noindent
\bscaps{Remark~\thethmcnt.}\hspace{-0.5ex}
}
{\vspace{1ex}}
\def\section{\@ifstar\unnumberedsection\numberedsection}
\def\numberedsection{\@ifnextchar[%]
  \numberedsectionwithtwoarguments\numberedsectionwithoneargument}
\def\unnumberedsection{\@ifnextchar[%]
  \unnumberedsectionwithtwoarguments\unnumberedsectionwithoneargument}
\def\numberedsectionwithoneargument#1{\numberedsectionwithtwoarguments[#1]{#1}}
\def\unnumberedsectionwithoneargument#1{\unnumberedsectionwithtwoarguments[#1]{#1}}
\def\numberedsectionwithtwoarguments[#1]#2{%
  \ifhmode\par\fi
  \removelastskip
  \vskip 1.7ex\goodbreak
  \refstepcounter{section}%
  \begingroup
  \noindent\leavevmode\Large\bfseries\scshape\normalsize
  \begin{center} \thesection.\ #2\end{center} 
  \endgroup
  %\vskip 2ex\nobreak
  \addcontentsline{toc}{section}{%
    \protect\numberline{\bf \thesection.}%
    \hspace{2.5ex} #1}%
  }
\def\unnumberedsectionwithtwoarguments[#1]#2{%
  \ifhmode\par\fi
  \removelastskip
  \vskip 1.7ex\goodbreak
%  \refstepcounter{section}%
  \begingroup
  \noindent\leavevmode\Large\bfseries\scshape\centering 
  %\S 
  %\thesection\ 
  \begin{center} #2 \end{center} \par
  \endgroup
  \vskip 2ex\nobreak
  \addcontentsline{toc}{section}{%
%    \protect\numberline{\thesection}%
    \hspace{1ex} #1}%
  }
\def\subsection{\@ifstar\unnumberedsubsection\numberedsubsection}
\def\numberedsubsection{\@ifnextchar[%]
  \numberedsubsectionwithtwoarguments\numberedsubsectionwithoneargument}
\def\unnumberedsubsection{\@ifnextchar[%]
  \unnumberedsubsectionwithtwoarguments\unnumberedsubsectionwithoneargument}
\def\numberedsubsectionwithoneargument#1{\numberedsubsectionwithtwoarguments[#1]{#1}}
\def\unnumberedsubsectionwithoneargument#1{\unnumberedsubsectionwithtwoarguments[#1]{#1}}
\def\numberedsubsectionwithtwoarguments[#1]#2{%
  \ifhmode\par\fi
  \removelastskip
  \vskip 1.7ex\goodbreak
  \refstepcounter{subsection}%
  \noindent
  \leavevmode
  \begingroup
  \bfseries\normalsize
  \noindent \thesubsection\ \bscaps{#2.}\ 
  \endgroup
  \addcontentsline{toc}{subsection}{%
    \hspace{2ex}\protect\numberline{\bf \thesubsection.}%
    \hspace{1ex} #1}%
  }
\def\unnumberedsubsectionwithtwoarguments[#1]#2{%
  \ifhmode\par\fi
  \removelastskip
  \vskip 3ex\goodbreak
%  \refstepcounter{section}%
  \noindent
  \leavevmode
  \begingroup
  \bfseries\normalsize
%  \thesection\ 
  \begin{center}\bscaps{#2.} \end{center}
  \endgroup
  \addcontentsline{toc}{subsection}{%
%    \protect\numberline{\thesection}%
    \hspace{1ex} #1}%
  }
\def\subsubsection{\@ifstar\unnumberedsubsubsection\numberedsubsubsection}
\def\numberedsubsubsection{\@ifnextchar[%]
  \numberedsubsubsectionwithtwoarguments\numberedsubsubsectionwithoneargument}
\def\unnumberedsubsubsection{\@ifnextchar[%]
  \unnumberedsubsubsectionwithtwoarguments\unnumberedsubsubsectionwithoneargument}
\def\numberedsubsubsectionwithoneargument#1{\numberedsubsubsectionwithtwoarguments[#1]{#1}}
\def\unnumberedsubsubsectionwithoneargument#1{\unnumberedsubsubsectionwithtwoarguments[#1]{#1}}
\def\numberedsubsubsectionwithtwoarguments[#1]#2{%
  \ifhmode\par\fi
  \removelastskip
  \vskip 3ex\goodbreak
  \refstepcounter{subsubsection}%
  \noindent
  \leavevmode
  \begingroup
  \bfseries
  \thesubsubsection\ \bscaps{#2.}\  %\quad
  \endgroup
  \addcontentsline{toc}{subsubsection}{%
    \hspace{2ex} \protect\numberline{\bf \thesubsubsection.}%
    #1}%
  }
\def\unnumberedsubsubsectionwithtwoarguments[#1]#2{%
  \ifhmode\par\fi
  \removelastskip
  \vskip 3ex\goodbreak
%  \refstepcounter{section}%
  \noindent
  \leavevmode
  \begingroup
  \bfseries
%  \thesection\ 
  \bscaps{#2.}\  %\quad
  \endgroup
  \addcontentsline{toc}{subsubsection}{%
%    \protect\numberline{\thesection}%
     #1}%
  }
\begin{document}

\title{\bscaps{Smoothed analysis in compressed sensing}}
\author{Elad Aigner-Horev}
\author{Dan Hefetz}
\author{Michael Trushkin}
\maketitle

\begin{abstract}
Arbitrary matrices $M \in \R^{m \times n}$, randomly perturbed in an additive manner using a random matrix $R \in \R^{m \times n}$, are shown to asymptotically almost surely satisfy the so-called {\sl robust null space property}. Whilst insisting on an asymptotically optimal order of magnitude for $m$ required to attain {\sl unique reconstruction} via $\ell_1$-minimisation algorithms, our results track the level of arbitrariness allowed for the fixed seed matrix $M$ as well as the degree of distributional irregularity allowed for the entries of the perturbing matrix $R$. Starting with sub-gaussian entries for $R$, our results culminate with these allowed to have substantially heavier tails than sub-exponential ones. Throughout this trajectory, two measures control the arbitrariness allowed for $M$; the first is $\|M\|_\infty$ and the second is a localised notion of the Frobenius norm of $M$ (which depends on the sparsity of the signal being reconstructed). A key tool driving our proofs is {\sl Mendelson's small-ball method} ({\em Learning without concentration}, J. ACM, Vol. $62$, $2015$).  
\end{abstract} 

\bigskip
\begin{center}
\noindent
{\footnotesize {\bf Keywords:} Smoothed Analysis, Compressed Sensing, Random Matrices, Null Space Properties.} 
\end{center}

\section{{\lsstyle Introduction}}

\medskip
\noindent
\scaps{{\lsstyle Synopsis}.} A canonical problem in the field of {\sl Compressed Sensing} is that of designing efficient algorithms for the reconstruction of an undisclosed vector $\bx \in \R^n$ through a measurements vector $M\bx$, or a noisy version thereof, where $M \in \R^{m \times n}$. Here $m$ is the number of measurements taken; its minimisation is of great significance. A robust theory now exists (see, e.g.,~\cite{FR13} as well as our account below for details) pinpointing various matrical properties whose satisfaction by $M$ allows for the exact and/or approximate reconstruction of $\bx$. In that, some two strands of matrical properties have become dominant. The first is the so-called {\sl Restricted Isometry Properties} (RIPs, hereafter) and the second is that of {\sl Null Space Properties} (NSPs, henceforth). These matrical properties are found conducive for the aforementioned task of reconstructing undisclosed vectors.  
The verification of whether a given matrix $M$ satisfies said properties is computationally intractable. In contrast, these properties are, nevertheless, ubiquitous as a plethora of probabilistic constructions demonstrates; in that, various models of random matrices $M$ are known to asymptotically almost surely (a.a.s., hereafter) satisfy said properties and consequently are compatible with various types of reconstructions of undisclosed vectors.

We study the NSPs of matrices of the form $M+R$, where $M \in \R^{m \times n}$ is an arbitrary deterministic matrix and $R \in \R^{m \times n}$ is a random matrix. In that, we pursue both the traits that the (deterministic) so-called {\em seed} matrix $M$ should satisfy as well as trace the extent of generality (or structural irregularity) that the distributions of the entries of the perturbing matrix $R$ can be allowed to have whilst maintaining that $M+R$ a.a.s. satisfies the {\sl robust null space property} (see the definition below); all this whilst keeping $m$ - the number of measurements - asymptotically best possible.

Roughly put, yet made precise below in Section~\ref{sec:results}, the following are some core messages arising from our results.   

\begin{enumerate}
    \item [{\bf 1.}] If the entries of $R$ are independent and sub-gaussian and further still sub-exponential, then $M+R$ a.a.s. satisfies the robust null space property with an optimal $m$ provided $\|M\|_\infty$ is independent of $n$ and $s$ (the sparsity level of the signal to be reconstructed), as long as $s \leq n^{1-\eps}$ holds for some arbitrarily small yet fixed $\eps > 0$. For details, see Corollary~\ref{cor:classical-perturb}.  
    
    \item [{\bf 2.}] In fact, the above requirement imposed on $M$ can be replaced by allowing $\|M\|_{\frob,s} = O\left(\sqrt{s \log(en/s)}\right)$ to hold, where $\|M\|_{\frob,s}$ is a certain {\sl localised} notion of the Frobenius norm of $M$ defined in~\eqref{eq:local-frob}. 
    
    \item [{\bf 3.}] All of the above remains true in the significantly more general setting in which the entries of $R$ are allowed to have tails substantially heavier than sub-exponential; for details, see Theorem~\ref{thm::main-perturb}.     
\end{enumerate}

% We study the NSPs of matrices of the form $M+R$, where $M \in \R^{m \times n}$ is an arbitrary deterministic matrix and $R \in \R^{m \times n}$ is a random matrix. We prove that $M+R$ a.a.s. satisfies the {\sl robust null space property} (see the definition below), whenever the entries of $R$ are sub-gaussian, sub-exponential, or extremely heavy-tailed in the sense that only the first $\log n$ moments of each entry are bounded (as opposed to the previous two cases in which all moments are bounded). All this while having $m$ - the number of measurements - being asymptotically best possible.    

Our results then enhance the aforementioned ubiquity claim made for NSPs by demonstrating that rather arbitrary matrices can be ``mended" through additive random perturbations as to a.a.s. yield a matrix satisfying a null space property of interest. In that, we apply the so-called framework of  {\sl Smoothed Analysis} to cornerstone notions in Compressed Sensing.

\bigskip
\noindent
\scaps{{\lsstyle Smoothed Analysis}.} The study of the properties of deterministic structures, with potentially undesirable features, randomly perturbed with a corresponding stochastic {\sl noise}, is referred to as {\em Smoothed Analysis}. This field originated with the seminal work of Spielman and Teng~\cite{ST04} in the realm of Computer Science; a rather accurate account as to the impact of Smoothed Analysis on Computer Science is provided in the book~\cite{R21}. Since the aforementioned result of Spielman and Teng, Smoothed Analysis has flourished across Combinatorics~\cite{AHDHS22,AHDHS22b,AHH21,AHHK23,AHHK25,AHHL23,AHHS24,AHP19,ADRRS21,ADR23,BHKM19,BFKM04,BFM03,BMPP20,CHT23,DKCM24,DT20,DT20b,DRRS20,KLS24,KST} as well as in Random Matrix Theory~\cite{AHHT25,J21,JSS22,TV10}. 

In this discipline, the resulting typical perturbed structure is viewed as {\sl smoother} compared to the original deterministic structure, forming the {\em seed} for the perturbation, on account of the former typically possessing certain desirable properties absent in the seed. Two facets are traditionally  observed in Smoothed Analysis results. The first being the level of generality enjoyed by the seed structure; the second focuses on the nature of the perturbing noise. Several objectives are of interest in this venue. One may seek to minimise the amount of randomness used for the perturbation. Additionally, there is interest in letting the noise be as unrestricted as possible, thus exploring the most general type of noise distributions for which smoothed analysis is possible. 

\bigskip
\noindent
\scaps{{\lsstyle Compressed Sensing}.}
Given integers $m \leq n$ and a {\sl measurements} matrix $M \in \R^{m \times n}$, a concealed vector $\bx \in \R^n$, as well as a {\sl measurements} vector $\by = M\bx + \bmu \in \R^m$ with $\|\bmu\|_p \leq \eps$ for some $\eps >0$ and $p \geq 1$, the so-called {\em Reconstruction Problem} is a central problem in the field of {\sl Compressed Sensing} calling for the reconstruction of the unseen vector $\bx$ from its revealed measurements $\by$. An influential methodology used for solving the Reconstruction Problem is the so-called {\sl basis pursuit}\footnote{Basis pursuit is a form of convex relaxation of an $\ell_0$-minimisation problem which can be recast as a linear program rendering the relaxation computationally tractable compared to its intractable origin~\cite{N95}.} technique\footnote{Also called basis pursuit de-noising program.} first appearing in~\cite{CDS01}. The latter delivers an approximation $\widehat{\bx} \in \R^n$ of $\bx$ satisfying 
\begin{equation}\label{eq:basis-p}
\widehat{\bx} \in \argmin \|\bz\|_1 \; \text{subject to}\; \|\by - M \bz \|_p \leq \eps.
\end{equation}
If $\eps$, the parameter controlling the {\sl noise} of the samples $\by$, satisfies $\eps = 0$, then the convex optimisation program~\eqref{eq:basis-p} is said to be executed in the {\em de-noised} setting. 

Such minimisation problems are effectively solved (see, e.g.,~\cite{FR13}). The proximity of the solution $\widehat{\bx}$ to $\bx$ determines the quality of the reconstruction; of special interest is the case where the latter two coincide allowing for {\sl exact reconstruction} to take place. Properties of measurements matrices $M$ allowing for high quality and perhaps exact reconstructions are then of main interest in this venue. In particular, the minimisation of $m$, the number of measurements of $\bx$ to be taken through $M$, is of great significance. For the classical choices of $p =1,2$ in~\eqref{eq:basis-p}, exact reconstruction in the de-nonised setting requires 
\begin{equation}\label{eq:optimal-m}
m = \Omega(s \log (n/s))
\end{equation}
by {\sl any} reconstruction algorithm (see~\cite{BIPW10,CDD09} and references therein), whenever $\bx$ is $s$-{\em sparse} by which we mean that $\|\bx\|_0 := |\supp(\bx)| := |\{i \in [n]: \bx_i \neq 0\}| \leq s$. Exact reconstruction algorithms realising~\eqref{eq:optimal-m} carry the message that reconstructing an (adequately) sparse vector essentially requires a number of measurements thereof that exceeds the size of the support of the reconstructed vector by a mere logarithmic factor. 

Two prominent lines of research handling~\eqref{eq:basis-p} have emerged in the field of Compressed Sensing. One originates with the works of Cand\'es and Tao~\cite{CT05,CT06} introducing the notion of {\em restricted isometry properties}. The second entails the notion of {\em null space properties}; its origins are more dispersed, so to speak; consult~\cite[Notes of Chapter~4]{FR13} for an accurate account. Roughly put, RIPs and NSPs are matrix properties, defined below, that if satisfied by the aforementioned measurements matrix $M$, guarantee ``high quality" solutions for~\eqref{eq:basis-p}. Some of these properties enable exact reconstruction and in fact characterise matrices that are able to supply this level of reconstruction; see, e.g.,~\cite[Chapter~4]{FR13} for further details. 

\bigskip
\noindent
\scaps{{\lsstyle RIP}.} A matrix $M \in \R^{m\times n}$ is said to satisfy RIP of order $s \in [n]$ with parameter $\delta$ provided that
\begin{equation}\label{eq:RIP}
(1 -\delta) \|\bx\|_2 \leq \|M\bx\|_2 \leq (1 +\delta) \|\bx\|_2
\end{equation}
holds, whenever $\bx \in \Sigma_s : = \{\bu \in \R^n: \|\bu\|_0 \leq s\}$; in that, $M$ serves as an {\sl approximate isometry} over $\Sigma_s$. Given $s \in [n]$, define $\delta_s(M)$ to be the least $0 < \delta \in \R$ for which $M$ satisfies RIP with parameter $\delta$ for the members of $\Sigma_s$. In fact, RIP is a part of a larger family of properties in which a matrix $M$ must satisfy 
$$
c\|\bx\|_q \leq \|M\bx\|_p \leq C \|\bx\|_q
$$
for some constants $c,C > 0$, whenever $\bx$ is $s$-sparse. Following~\cite{SGR15}, we refer to this form of conditions using the term $\mathrm{RIP}_{p,q}$.

If a matrix $M$ satisfies RIP with $\delta_s(M) \leq 1/3$, then the technique of basis pursuit~\eqref{eq:basis-p}, utilising $M$ as its measurements matrix, can perform exact reconstruction of any $s$-sparse vector in the de-noised setting; see, e.g.~\cite[Theorem~6.9]{FR13}. 

In~\cite{CT05,CT06} it is shown that (scaled) Gaussian matrices\footnote{We refer to random matrices with independent, though not necessarily identical, Gaussian entries as {\em Gaussian matrices}.} with i.i.d. entries a.a.s. satisfy RIP whilst realising~\eqref{eq:optimal-m}; sub-gaussian\footnote{We follow the definition of~\cite[Chapter~2]{Vershynin}.} matrices\footnote{We refer to random matrices with independent, though not necessarily identical, sub-gaussian entries as {\em sub-gaussian matrices}.} with i.i.d entries were handled in~\cite{BDDW08,MPTJ08} (see~\cite[Chapter~9]{FR13} as well). We would be remiss if we were not to mention that rudiments of the arguments needed for establishing RIP for Gaussian and Bernoulli matrices appeared as early as~\cite{GG84,K97}. 
In fact, to the best of our knowledge, the only known constructions of matrices capable of driving basis pursuit~\eqref{eq:basis-p} into exact reconstruction via RIP in the de-noised setting whilst realising~\eqref{eq:optimal-m}, are probabilistic; canonical such matrices are (scaled) sub-gaussian matrices; for additional models consult~\cite[Chapter~12]{FR13}. This is not surprising in view of the computational intractability accompanying the verification of RIP~\cite{BDMS13, TP14} even approximately~\cite{W18}.

\bigskip
\noindent
\scaps{{\lsstyle NSPs}.} A matrix $M \in \R^{m \times n}$ is said to satisfy the {\em null space property relative to a set $S \subseteq [n]$} provided 
$
\|\bv_S\|_1 < \|\bv_{\bar S}\|_1
$
holds for every $\bv \in \mathrm{ker} (M) \sm\{\boldsymbol{0}\}$,  where $\bar S := [n] \sm S$ and where by $\bv_S$ we mean a vector in $\R^n$ whose support coincides with $S$ and satisfies $(\bv_S)_i = \bv_i$, whenever $i \in S$. If $M$ satisfies the null space property relative to every subset $S \subseteq [n]$ of size $|S| \leq s \in [n]$, then $M$ is said to satisfy the NSP of order $s \in [n]$, denoted $\mathrm{NSP}(s)$. By~\cite[Theorem~4.4]{FR13} a matrix $M \in \R^{m \times n}$ allows for exact reconstruction in the de-noised setting of~\eqref{eq:basis-p} if and only if is satisfies NSP (of the appropriate order).  

A discussion pertaining to the connections between RIPs and NSPs appears below; we choose to first proceed with the definitions of two strengthened variants of the NSP that were introduced in~\cite{F14}. The first is referred to as {\em stable} NSP (SNSP, hereafter) and the second is called {\em robust} NSP (RNSP, henceforth) with the latter being the strongest and implying SNSP.   

A matrix $M \in \R^{m \times n}$ is said to satisfy SNSP  with parameter $\rho \in (0,1)$ relative to a subset $S \subseteq [n]$ provided
$
\|\bv_S\|_1 \leq \rho \|\bv_{\bar S}\|_1
$
holds for every $\bv \in \mathrm{ker} (M)$. The matrix $M$ is said to satisfy the SNSP of order $s \in [n]$ and parameter $\rho$, denoted $\mathrm{SNSP}(s,\rho)$, provided the latter has said property relative to every subset $S \subseteq [n]$ of size $|S| \leq s$. Roughly put, the notion of stability was introduced in order to cope with the reconstruction of vectors that are close to sparse ones; see~\cite[Section~4.2]{FR13} for further details.

In order to accommodate the reconstruction of an undisclosed vector $\bx \in \R^n$ through a measurements vector $\by \in \R^m$ satisfying, say, $\|\by - M\bx\|_2 \leq \eta$, for some $\eta := \eta(m,n) \geq 0$, an additional strengthening of the aforementioned null space properties is introduced through the notion of {\sl robustness}. 
A matrix $M \in \R^{m \times n}$ is said to satisfy the $\ell_q$-RNSP with respect to the norm $\|\cdot\|$ along with parameters $\rho \in (0,1)$ and $\tau >0$ and relative to a set $S \subseteq [n]$ provided that 
\begin{equation}\label{eq:rnsp}
\|\bv_S\|_q \leq \frac{\rho}{s^{1-1/q}} \|\bv_{\bar S}\|_1 + \tau \|M\bv\|
\end{equation}
holds for every $\bv \in \R^n$. If said property is upheld by $M$ 
relative to every subset $S \subseteq [n]$ of size $|S| \leq s$, then $M$ is said to satisfy the $\ell_q$-RNSP of order $s$ (with parameters $\rho$ and $\tau$). 
If $\|\cdot\|$ is the $\ell_p$-norm, then we write $(\ell_q,\ell_p)-\mathrm{RNSP}(s,\rho,\tau)$ to indicate the corresponding property; if $p = q$, then $\ell_q-\mathrm{RNSP}(s,\rho,\tau)$ is written instead. The parameters $\rho$ and $\tau$ are suppressed writing $\ell_q-\mathrm{RNSP}(s)$, whenever the focus on $s$ takes precedence. For further details pertaining to RNSP, consult~\cite[Section~4.3]{FR13}; in particular, this property has several variants omitted here. 
As in the case of RIP, computational intractability accompanies the NSPs~\cite{TP14} as well. 

\bigskip
\noindent
\scaps{{\lsstyle NSPs of random matrices}.} For the sake of brevity, in this venue we choose to focus on $\ell_2$-RNSP results solely; this on account of $\ell_q$-RNSP results following in the wake of the former along similar lines. Standard Gaussian matrices are random matrices with each entry forming an independent copy of $N(0,1)$. By~\cite[Theorem~9.29]{FR13} (see~\cite[Theorem~3.3]{SGR15} as well), such a matrix $A \in \R^{m \times n}$ satisfies $\mathrm{SNSP}(s,\rho)$ with probability at least $1-\eps$ whenever $s < n$, $\eps \in (0,1)$, and 
$$
\frac{m^2}{m+1} \geq 2 s \log(en/s) \left(1+\rho^{-1}+0.92 + \sqrt{\frac{\log(\eps^{-1})}{s \log \left(\frac{en}{s} \right)}} \right)^2.
$$

Models more conducive for the computational setting are captured through discrete random matrices; a prominent example of such a model is the family of the $p$-{\em Bernoulli} matrices, that is, matrices whose entries are i.i.d. copies of $\mathrm{Ber}(p)$ for some $p \in [0,1]$. For such a matrix $A \in \{0,1\}^{m \times n}$, a result by K\"ung and Jung, namely \cite[Theorem~9]{KJ16b}, asserts that $A$ satisfies $\ell_2-\mathrm{RNSP}(s,\rho,\tau)$ with failure probability at most $\exp\left(-\frac{(p(1-p))^2}{72}m \right)$, whenever $\rho \in (0,1)$ and
$$
m \geq C \rho^{-2} \alpha(p) s\left(\log \left(\frac{en}{s}\right) +\beta(p) \right),
$$
where $C >0$ is a sufficiently large absolute constant and where for every $p \in (0,1) \setminus \{1/2\}$,
$$
\alpha(p) := \frac{2p-1}{(p(1-p))^3\log\left(\frac{p}{1-p} \right)}, \; \beta(p) := \frac{2p^2\log\left(\frac{p}{1-p} \right)}{2p-1},\; \text{and}\; \tau = \frac{C'}{(p(1-p))^{3/2} \sqrt{m}}
$$
with $C' >0$ being some absolute constant (for $p = 1/2$, a similar result holds). 

%Evidently, this result of~\cite{KJ16b} cannot handle $p = o(1)$ without overshooting~\eqref{eq:optimal-m}. 

% Handling this setting is raised as an open problem~\cite[Remark~10]{KJ16b} with the authors of~\cite{KJ16b} designating the latter for future work; to the best of our knowledge, this endeavour was never picked up in any follow-up work by the authors of~\cite{KJ16b} nor by anybody else.      

The RNSP of random matrices whose entries are {\sl heavy-tailed}, so to speak, has been studied in~\cite{SGR15,LM17}. The latter consider random matrices whose entries are independent copies of a random variable $X$ satisfying 
\begin{equation}\label{eq:beyond-exp}
\|X\|_p:= \left(\Ex \{|X|^p \} \right)^{1/p} \leq \lambda p^\alpha
\end{equation}
for some $\lambda >0$ and $\alpha \geq 1/2$, whenever $2 \leq p \leq \log n$. Sub-gaussian and sub-exponential random variables satisfy~\eqref{eq:beyond-exp} for all of their moments; consequently, the condition~\eqref{eq:beyond-exp} ventures well-beyond these classical distributions allowing for the entries of the random matrix to possess rather heavy tails.   

Concretely, the aforementioned result from~\cite{SGR15} (see Corollary~5.3 there) asserts that random matrices $A \in \R^{m \times n}$ whose entries are zero-mean and satisfying~\eqref{eq:beyond-exp}, and enjoy the additional small-ball property
\begin{equation}\label{eq:small-ball-beyond}
\forall i \in [m] \;\;\; \Pr \left\{|\inner{\ba_i} {\bx}| \geq \zeta \right\} \geq \beta,
\end{equation}
where $\ba_1, \ldots, \ba_m$ are the rows of $A$, $\bx \in \mathbb{S}^{n-1}$, $0 < \zeta \in \R$, and $\beta \in [0,1]$, satisfy $\ell_2-\mathrm{RNSP}(s)$\footnote{To extract details on $\rho$ and $\tau$ in this result, consult~\cite[Theorem~5.1]{SGR15} as well as~\cite[Theorem~A]{LM17}.} with probability at least $1 -\eta$ provided 
\begin{equation}\label{eq:m-beyond-exp}
m \geq C \max\left\{\frac{\lambda^2 \exp(2\alpha-2)}{(\zeta \beta)^2}s\log(en/s), \frac{\log\left(\eta^{-1} \right)}{\beta^2}, (\log n)^{2\alpha -1}\right\},
\end{equation}
where $\lambda$ and $\alpha$ are as in~\eqref{eq:beyond-exp}.

The results of~\cite{SGR15,LM17} essentially assert that random matrices whose entries are mean-zero i.i.d.\footnote{In~\cite{LM17}, this requirement is mitigated further where merely the rows of $A$ are required to have identical distributions. The stricter requirement of i.i.d. entries appearing in~\cite{SGR15} improves certain parameters.} copies of a rather heavy-tailed random variable, which are also endowed with the property that each of their rows exhibits a {\sl small-ball} probability bound over the $n$-dimensional ball, as seen in~\eqref{eq:small-ball-beyond}, are a.a.s. $\ell_2$-RNSP. 

% In~\cite[Theorem~A]{LM17}, dependencies between the entries of each single row of the random matrix are allowed as long as all rows admit the same distribution; alas, the failure probability in~\cite[Theorem~A]{LM17} has order of magnitude $n^{-\Theta(1)}$. \elad{To the best of our knowledge, it is unknown whether failure rates that decay exponentially with the dimensions of the matrix are possible in the sub-exponential setting.} 

\bigskip
\noindent
\scaps{{\lsstyle Gap between RIPs and NSPs}.} A matrix $M$ satisfying RIP  with parameter $\delta$ for all members of $\Sigma_{2s}$ satisfies $\ell_2-\mathrm{RNSP}(s,\rho(\delta),\tau(\delta))$~\cite[Theorem~6.13]{FR13}\footnote{RIP does not imply $\ell_q$-RNSP when $q >2$; roughly put, the kernel of a matrix satisfying RIP may contain non-sparse ``spiky" vectors that break the $\ell_q$-RNSP property - a matter that cannot occur in the $\ell_2$ geometry.}. Nevertheless, a gap between the NSPs and the RIPs exists; an accurate discussion of said gap can be found in~\cite{CCW16,SGR15} delivering the message that NSPs are weaker than RIP and consequently form less demanding properties. At a more fundamental level, and as explained in~\cite[Page~147]{FR13}, RIP is highly sensitive to various scalings of the measurements matrix $M$. That is, given a matrix $M$ satisfying~\eqref{eq:RIP} with $\delta_s(M) < 3/5$, it can be shown that $2 M$ satisfies~\eqref{eq:RIP} for members of $\Sigma_s$ provided $\delta_{s}(2M) \geq 3 - 4 \delta_s(M) > \delta_s(M)$. This in particular means that $\delta_s(DM) > \delta_s(M)$ may occur for diagonal matrices $D \in \R^{m \times m}$. Reshuffling the measurements of $M$ has no effect on its RIP parameters; indeed, $\delta_s(PM) = \delta_s(M)$ holds, whenever $P \in \R^{m \times m}$ is a permutation matrix. NSPs, on the other hand, exhibit a more stable behaviour in the face of rescaling~\cite[Remark~4.6]{FR13}. Indeed, the {\em kernel} identity $\mathrm{ker} (NM) = \ker (M)$, holding whenever $N \in \R^{m \times m}$ is non-singular, implies that the NSP is preserved under such matrical products on the left provided $M$ satisfies the NSP; the same cannot be said for matrix multiplications to the right of $M$~\cite[Exercise~4.2]{FR13}. 

One type of gap between RIP and the NSPs that we choose to accentuate arises through the study of RIP for sub-exponential matrices\footnote{We refer to random matrices with independent, though not necessarily identical, sub-exponential entries as {\em sub-exponential matrices}.}. The latter satisfy RIP a.a.s. provided that the number of samples (i.e., $m$) taken has order of magnitude $\Omega\left(s \log^2(n/s)\right)$, with $s$ as per~\eqref{eq:optimal-m}; this bound cannot be improved~\cite{ALPTJ11}. A departure from the optimal bound~\eqref{eq:optimal-m} is then observed for RIP in this setting. NSPs, however, are met a.a.s. by sub-exponential matrices~\cite{SGR15,LM17} whilst realising~\eqref{eq:optimal-m} as seen in~\eqref{eq:m-beyond-exp}. 

%However, as already noted above, these are not accompanied by failure rates that decay exponentially with the dimension of the matrix.  

\bigskip
\noindent
\scaps{{\lsstyle Smoothed analysis in Compressed Sensing}.} To the best of our knowledge, the first to study the NSPs of deterministic matrices $M$, randomly perturbed in an additive fashion using a sub-gaussian noise matrix $R$, were Shadmi, Jung, and Caire~\cite{SJC19b} (see~\cite{SJC19} as well); their work generalises certain aspects seen in~\cite{KJ16b,KJ16} (with all of these results building upon ideas seen in~\cite{SGR15,LM17}). Roughly put, in~\cite{SJC19b} it is shown that a deterministic matrix $M$ with {\sl all} of its entries set to the {\sl same} real constant can be made to satisfy $\ell_q$-RNSP a.a.s. following an additive random perturbation $M+R$, with $R$ being a random matrix having independent sub-isotropic identically distributed rows, and satisfying rather strict sub-gaussian assumptions (see~\cite[Corollary~1]{SJC19,SJC19b} for details).   

Smoothed Analysis of RIP was pursued by Kasiviswanathan and Rudelson~\cite{KR19} along a {\sl multiplicative} random perturbation model. It is shown in~\cite{KR19} (see Theorem~3.1 there) that given $\eps \in (0,1)$, a deterministic matrix $M \in \R^{m \times n}$, and a sub-gaussian matrix $R \in \R^{n \times d}$ with independent centred entries $R_{ij}$ satisfying $\Ex\{R^2_{ij}\}=1$ and $\max_{i,j}\|R_{ij}\|_{\psi_2} \leq K$ (see Appendix~\ref{app:dist} for a definition of the sub-gaussian norm $\|\cdot\|_{\psi_2}$), the {\sl product} matrix $MR$ satisfies $\|MR\bu\|_2 = (1\pm \eps) \|M\|_{\mathsf{F}}\|\bu\|_2$, whenever $\bu \in \Sigma_s$, with failure probability at most $\exp\left(- c\eps^2\cdot\mathrm{sr}(M)/K^4 \right)$, for some absolute constant $c >0$, provided that the {\em stable rank} of $M$, denoted $\mathrm{sr}(M)$, satisfies 
$$
\mathrm{sr}(M) := \frac{\|M\|_{\mathsf{F}}^2}{\|M\|_2^2} =  \Omega \left(\frac{K^4}{\eps^2} s \log\left(\frac{d}{s}\right) \right).
$$
Requiring that $\|MR\bu\|_2$ be tightly concentrated around $\|M\|_{\mathsf{F}}\|\bu\|_2$ is unavoidable, for indeed $\Ex \left\{\|MR\bu\|_2 \right\} =  \|M\|_{\mathsf{F}}\|\bu\|_2$ (see~\cite{KR19} for details); this compels the normalisation $M/\|M\|_{\mathsf{F}}$ if one is to reach the approximate isometry required by RIP~\eqref{eq:RIP}.

The two perturbation models introduced thus far, namely the {\sl additive} one $M+R$ and the {\sl multiplicative} one $MR$, are of interest in this venue. Pursuing RIP in the additive model, however, is futile regardless of the random perturbation chosen. To see this, let $\eps >0$ and $M \in \R^{m \times n}$ be fixed, and consider for concreteness a sub-gaussian random matrix $R \in \R^{m \times n}$ whose entries are centred and have variance one (though, any type of random matrix will do). Then, for any $\bx \in \R^n$, we may write 
\begin{equation}\label{eq:additive-RIP}
\left\|\left(M+\frac{\eps}{\sqrt{n}}R\right)\bx\right\|_2^2 = \|M\bx\|_2^2 + \frac{2\eps}{\sqrt{n}} \sum_i \inner{\bm_i}{\bx}\inner{\br_i}{\bx} + \frac{\eps^2}{n}\|R\bx\|_2^2,
\end{equation}
where $\bm_i$ and $\br_i$ denote the $i$th rows of $M$ and $R$, respectively. In order to compel~\eqref{eq:additive-RIP} to be close to $\|\bx\|_2^2$, as to ensure an approximate isometry, the distortion of $R$ is controlled through the normalisation by $n^{-1/2}$; to see this, consult, e.g.,~\cite[Theorem~4.6.1]{Vershynin} providing tight two-sided estimates for the extreme singular values of $R$. Alas, the deterministic term $\|M\bx\|_2$ present in~\eqref{eq:additive-RIP} mandates that for an approximate isometry to be obtained post perturbation by $R$, the original seed matrix $M$ must be an approximate isometry itself defeating the caveated message of the random perturbation ``mending" $M$ into an approximate isometry. 
For NSPs, however, our results show that the additive perturbation model does make sense.

\subsection{Our results}\label{sec:results} In this section, we state our main result, namely Theorem~\ref{thm::main-perturb}. Appreciation for the more abstract formulation of the latter, we develop by first presenting its implications pertaining to the $\ell_2$-RNSP property of fixed matrices randomly perturbed (in an additive fashion) using random matrices whose entries obey classical, so to speak,  distributional laws, namely sub-gaussian and sub-exponential. This can be seen in Corollaries~\ref{cor:classical} and~\ref{cor:classical-perturb}.  Theorem~\ref{thm::main-perturb} ventures much further than these restricted corollaries thereof by accommodating random matrix perturbations, entries of which are heavy-tailed in the sense of~\eqref{eq:beyond-exp}.       

% Theorem~\ref{thm::main-gauss} and its counterpart, namely Corollary~\ref{cor:subgauss-perturb}, assert that once additively randomly perturbed using a sub-gaussian matrix $R \in \mathbb{R}^{m\times n}$, rather arbitrary fixed matrices $M \in \R^{m \times n}$ a.a.s. satisfy $\ell_2$-RNSP; this whilst realising the optimal bound for $m$, namely~\eqref{eq:optimal-m}. Sub-exponential additive perturbation is handled in Theorem~\ref{thm::main-exp} and its counterpart, namely Corollary~\ref{cor:exp-perturb}. Finally, significantly heavier-tailed random perturbations are handled by Theorem~\ref{thm::main-perturb} and its counterpart, namely Corollary~\ref{cor::main-perturb}. In that, our results capture a rather broad spectrum of random perturbations. 

%commencing with the so-called more classical distributions, namely the sub-gaussian and sub-exponential ones, and culminating with some rather heavy-tailed distributions. 

Outline of the core approach used to prove the aforementioned results is provided in Section~\ref{sec:approach} whilst in Section~\ref{sec:contribution}, the impact of our results is detailed.  

\begin{remark}
We do not pursue $(\ell_q,\ell_p)$-RNSP results for (additively) randomly perturbed matrices as such results can be established through an adaptation of our arguments here supporting $\ell_2$-RNSP of (additively) randomly perturbed matrices. 
\end{remark}

\medskip
\noindent
{\bf Notation.} Our notation follows that of~\cite{Vershynin}. Given a random variable $X$ and a positive integer $p$, write $\mu_p(X) := \Ex \{(X -\Ex X)^p \}$ to denote the $p$-{\em central moment} of $X$. Note that $\mu_1(X) = 0$ always holds and $\mu_2(X)$ is simply the variance of $X$.    
A random matrix $A = (a_{ij}) \in \R^{m\times n}$ is said to be $k$-{\em central} if $\mu_p(a_{ij}) = \mu_p(a_{st})$ holds for every $p \in [k]$ and whenever $i,s \in [m]$ and  $j,t \in [n]$. 
Set 
$$
\mu_k(A):= \max_{\substack{ i\in [m] \\ j \in [n]}} |\mu_k(a_{ij})|.
$$
As noted above, $\mu_1(A) = 0$ always holds. For instance, $2$-centrality of a matrix simply means that all entries of the latter have the same variance. A reoccurring quantity in our results below is
$$
K(A) := \mu_2(A)^2+\mu_3(A)+ \mu_2(A)\mu_3(A) + \mu_4(A);
$$
which is well-defined provided the involved central moments are all finite. 

\medskip
\noindent
Similar notation, to that set here for matrices, applies to vectors as well.

% Finally, set
% $$
% \|A\|_{\psi_2} := \max_{\substack{i \in [m]\\j \in [n]}} \|a_{ij}\|_{\psi_2}
% $$
% and define $\|A\|_{\psi_1}$ analogously; definitions of the norms $\|\cdot\|_{\psi_1}$ and $\|\cdot\|_{\psi_2}$ can be seen in Appendix~\ref{app:dist}. Similar notation to that set here for matrices applies to vectors as well.

\bigskip
\noindent
{\bf Classical perturbations.} For a random matrix $A = (a_{ij}) \in \R^{m \times n}$ and $k \in \{1,2\}$, set
$$
\|A\|_{\psi_k} := \max_{\substack{i \in [m]\\j \in [n]}} \|a_{ij}\|_{\psi_k};
$$
a definition of the so-called sub-exponential and sub-gaussian norms $\|\cdot\|_{\psi_1}$ and $\|\cdot\|_{\psi_2}$, respectively, can be seen in Appendix~\ref{app:dist}. The first corollary derived from our main result, namely Theorem~\ref{thm::main-perturb}, reads as follows. 

\begin{corollary}\label{cor:classical}
Let $k \in \{1,2\}$ and let $A \in \R^{m \times n}$ be a $2$-central random matrix. Then, the matrix $A$ satisfies the $\ell_2$-$\mathrm{RNSP}(s,\rho,\tau)$ property with probability at least $1-\exp\left(-\frac{\mu_2(A)^4}{K(A)^2} \cdot m \right)$, whenever $\rho \in (0,1)$, $\tau >0$, $s \in [n]$, and 
\begin{equation}\label{eq:m1}
m \geq C \left(\frac{K(A)}{\mu_2(A)^{5/2}\tau}\left(1+ \tau \rho^{-1}\|A\|_{\psi_k} \sqrt{s \log n} \right)\right)^2,
\end{equation}
where $C > 0$ is some sufficiently large absolute constant. 
\end{corollary}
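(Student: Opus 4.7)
The plan is to derive Corollary~\ref{cor:classical} as a direct instance of the main result, Theorem~\ref{thm::main-perturb}, by extracting the small-ball and moment parameters demanded by the latter from the hypotheses on $A$. The key distributional facts needed to invoke a Mendelson-style small-ball argument (see the discussion around~\eqref{eq:small-ball-beyond} and~\eqref{eq:beyond-exp}) are (i) a uniform small-ball probability bound for the rows of $A$ on the unit sphere and (ii) a moment-growth condition controlling their upper tails. For $A$ sub-gaussian (resp.\ sub-exponential), item (ii) is automatic: the entries satisfy $\|a_{ij}\|_p \leq C\|A\|_{\psi_k}\,p^{1/k}$ for $p\geq 1$, which is exactly~\eqref{eq:beyond-exp} with $\alpha = 1/k \in \{1/2,1\}$ and $\lambda \asymp \|A\|_{\psi_k}$. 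The bulk of the work therefore lies in establishing (i) in terms of the central moments $\mu_2, \mu_3, \mu_4$ that make up $K(A)$.

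For (i), I would fix a unit vector $\bx \in \R^n$ and let $\ba_i$ be a row of $A$. By $2$-centrality together with independence and centering of the entries,
$$
\Ex\bigl[\inner{\ba_i}{\bx}^2\bigr] \;=\; \mu_2(A)\|\bx\|_2^2 \;=\; \mu_2(A).
$$
Expanding $\Ex\bigl[\inner{\ba_i}{\bx}^4\bigr]$ and using $\Ex[a_{ij}]=0$, every index multiset containing a singleton (the $3$-$1$, $2$-$1$-$1$ and all-distinct patterns) contributes zero; only the all-equal and the three $2$-$2$ pairings survive, yielding
$$
\Ex\bigl[\inner{\ba_i}{\bx}^4\bigr] \;\leq\; \mu_4(A)\|\bx\|_4^4 + 3\mu_2(A)^2\|\bx\|_2^4 \;\leq\; \mu_4(A) + 3\mu_2(A)^2 \;\leq\; C\cdot K(A).
$$
Applying the Paley--Zygmund inequality to the non-negative variable $\inner{\ba_i}{\bx}^2$ at the threshold $\mu_2(A)/2$ then produces a small-ball bound of the form~\eqref{eq:small-ball-beyond}, uniformly in $\bx \in \mathbb{S}^{n-1}$, with
$$
\zeta \;=\; \tfrac{1}{\sqrt 2}\sqrt{\mu_2(A)} \qquad\text{and}\qquad \beta \;\geq\; c \cdot \frac{\mu_2(A)^2}{K(A)}.
$$

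With $(\zeta, \beta, \lambda, \alpha)$ identified, the sample-size condition coming from Theorem~\ref{thm::main-perturb} splits, in the same spirit as the prototype~\eqref{eq:m-beyond-exp}, into a noise-floor contribution of order $(\tau\zeta\beta)^{-2}$ and a statistical contribution of order $\rho^{-2}\bigl(\lambda/(\zeta\beta)\bigr)^2 s\log n$. Since $\zeta\beta \asymp \mu_2(A)^{5/2}/K(A)$ and $\lambda \asymp \|A\|_{\psi_k}$, these two pieces combine to give exactly the form~\eqref{eq:m1}, and the universal failure estimate $\exp(-c\beta^2 m)$ transforms to $\exp(-c'\mu_2(A)^4/K(A)^2 \cdot m)$, as claimed.

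The hard part, in my view, is not conceptual but organisational: matching the exact combination $\mu_2(A)^{5/2}/K(A)$ inside~\eqref{eq:m1} requires tracking the role of $\zeta$ (which is only $\sqrt{\mu_2(A)}$) separately from $\beta$ throughout the Mendelson-style argument of Theorem~\ref{thm::main-perturb}, rather than lumping them together as the product $\zeta\beta$. I would also flag one sanity check: the moment $\mu_3(A)$ does not feature in the Paley--Zygmund step above, so its appearance inside $K(A)$ must re-enter through the upstream theorem's empirical-process bounds, presumably to control higher cross moments of $\inner{\bm_i}{\bx}\inner{\br_i}{\bx}$-type terms native to the additive perturbation setting; one should verify that the small-ball estimate on $\beta$ is consistent with, and not tighter than, what Theorem~\ref{thm::main-perturb} actually demands.
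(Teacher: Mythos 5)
Your plan of invoking Theorem~\ref{thm::main-perturb} is the right starting point, but the instantiation you sketch re-derives a small-ball estimate rather than just substituting parameters, and in doing so it introduces a genuine error: your computations of $\Ex\bigl[\inner{\ba_i}{\bx}^2\bigr]$ and $\Ex\bigl[\inner{\ba_i}{\bx}^4\bigr]$ assume the entries of $A$ are centred, but Corollary~\ref{cor:classical} makes no such assumption. Being $2$-central only says all entries share the same variance; it says nothing about the mean, and allowing a nonzero mean is the entire point (Corollary~\ref{cor:classical-perturb} applies Corollary~\ref{cor:classical} to $A := M+R$, whose mean is $M + \Ex\{R\}$). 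Once the mean is nonzero, $\Ex\bigl[\inner{\ba_i}{\bx}^2\bigr] = \mu_2(A) + \inner{\Ex\{\ba_i\}}{\bx}^2$ (Observation~\ref{lem:inner-prod}), the fourth moment acquires cross terms in $\mu_3(A)$ (Lemma~\ref{lem:4-moment-2}), and the empirical-width bound picks up the $F(A)$ term of Theorem~\ref{thm::main-perturb}. Your own flag about $\mu_3(A)$ being absent from the Paley--Zygmund step correctly identifies the crack, but your guess that it re-enters through the empirical-process side is wrong: it belongs squarely in the (non-centred) small-ball estimate.

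The paper's actual deduction is far shorter and does not re-open Mendelson's machinery at all. Theorem~\ref{thm::main-perturb} already outputs the bound in the form $C\bigl(\tfrac{K(A)}{\mu_2(A)^{5/2}\tau}(1+\tau\rho^{-1}(e^{2\alpha}\kappa\sqrt{s\log(en/s)}+F(A)))\bigr)^2$; one just sets $\alpha = 1/k$, notes that $A-\Ex\{A\}$ is $(\kappa,\alpha,\log(en))$-reasonable with $\kappa = O(\|A\|_{\psi_k})$, and bounds $F(A) \le \sqrt{s\log n}\,\|\Ex\{A\}\|_\infty = O(\sqrt{s\log n}\,\|A\|_{\psi_k})$ using $\Ex\{|X|\} = O(\|X\|_{\psi_k})$. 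Absorbing $e^{2\alpha}$ and constants into $C$ then yields~\eqref{eq:m1} verbatim. Also note that your intermediate form with $\zeta$, $\beta$, $\lambda$ is the parametrisation of the earlier result~\eqref{eq:m-beyond-exp} from the literature, not the shape in which Theorem~\ref{thm::main-perturb} is stated, so matching the two would require re-proving the theorem rather than applying it.
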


\begin{remark}
If the terms $\rho,\tau,\|A\|_{\psi_k}$, as well as $(\mu_i)_{i\in [4]}$, appearing in~\eqref{eq:m1}, are all independent of $s$ and $n$, and $s \leq n^{1-\eps}$ holds for some arbitrarily small yet fixed $\eps >0$, then the lower bound~\eqref{eq:m1} asymptotically coincides with the optimal lower bound~\eqref{eq:optimal-m}.    
\end{remark}

\begin{remark}
We do not pursue the optimality of the quotient $\frac{K(A)}{\mu_2(A)^{5/2}}$.    
\end{remark}

Unlike previous results~\cite{SGR15,KJ16,KJ16b,LM17,SJC19,SJC19b}, Corollary~\ref{cor:classical} does not impose a strict distributional identity on either the entries of the matrix or its rows. This, in turn, allows for the following smoothed analysis type result with respect to $\ell_2$-RNSP. 

\begin{corollary}\label{cor:classical-perturb}
Let $k \in \{1,2\}$, let $M \in \R^{m\times n}$ be an arbitrary yet fixed matrix, and let $R \in \R^{m \times n}$ be a $2$-central random matrix. Then, $M+R$ satisfies the $\ell_2$-$\mathrm{RNSP}(s,\rho,\tau)$ property with probability at least $1-\exp\left(-\frac{\mu_2(R)^4 }{K(R)^2} \cdot m\right)$, whenever $\rho \in (0,1)$, $\tau >0$, $s \in [n]$, and 
\begin{equation}\label{eq:m-R}
m \geq C \left(\frac{K(R)}{\mu_2(R)^{5/2}\tau}\left(1+ \tau \rho^{-1}\sqrt{s \log n} \left(\|R\|_{\psi_k}+ \|M\|_\infty\right)\right)\right)^2,
\end{equation}
where $C > 0$ is some sufficiently large absolute constant. 
\end{corollary}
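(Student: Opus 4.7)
The plan is to apply Corollary~\ref{cor:classical} directly to the random matrix $A := M+R$; almost everything then follows by inspection, using two translation-type invariances. First I would note that central moments are translation invariant: for any constant $c$ and random variable $X$, $\mu_p(c+X) = \mu_p(X)$. Consequently $\mu_p(M_{ij}+R_{ij}) = \mu_p(R_{ij})$ for every $i,j$ and every $p \geq 2$, so that $\mu_p(A) = \mu_p(R)$ and $K(A) = K(R)$. In particular $A$ inherits the $2$-centrality of $R$ (each entry of $A$ has variance $\mu_2(R)$), so Corollary~\ref{cor:classical} is indeed applicable to $A$, and its failure probability bound $\exp(-\mu_2(A)^4/K(A)^2 \cdot m)$ matches the one claimed for $M+R$ verbatim.

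The second step is to compare the $\psi_k$-norms. The norm $\|\cdot\|_{\psi_k}$ is \emph{not} translation invariant, but it is sub-additive up to an absolute constant and satisfies $\|c\|_{\psi_k} \leq C_k|c|$ for any deterministic constant $c$ (see Appendix~\ref{app:dist}). Hence
\[
\|A\|_{\psi_k} \;=\; \max_{i,j}\|M_{ij}+R_{ij}\|_{\psi_k} \;\leq\; C_k\bigl(\|M\|_\infty + \|R\|_{\psi_k}\bigr).
\]
Plugging this into the right-hand side of~\eqref{eq:m1} applied to $A$, and absorbing $C_k$ into the absolute constant $C$, yields exactly~\eqref{eq:m-R}. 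Hence any $m$ satisfying~\eqref{eq:m-R} automatically satisfies the hypothesis of Corollary~\ref{cor:classical} for $A$, and that corollary delivers $\ell_2$-$\mathrm{RNSP}(s,\rho,\tau)$ for $M+R$ with the advertised probability.

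There is no real obstacle here: the corollary is a direct reduction, and the key conceptual point is simply the translation invariance of the central moments $\mu_p$ (and thus of $K(\cdot)$), which renders the fixed seed $M$ invisible to the probabilistic quantities governing Corollary~\ref{cor:classical}; only the deterministic $\|\cdot\|_{\psi_k}$ picks up a contribution from $M$, and only through $\|M\|_\infty$. This is precisely the structural feature of NSPs (noted in the paper's discussion around~\eqref{eq:additive-RIP} as being absent for RIP) that powers additive smoothed analysis with an arbitrary deterministic seed.
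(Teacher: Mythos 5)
Your proposal is correct and follows the paper's own reduction essentially verbatim: both set $A:=M+R$, observe that translation invariance of central moments gives $\mu_p(A)=\mu_p(R)$ and hence $K(A)=K(R)$ and the same failure probability, and then bound $\|A\|_{\psi_k}$ by $O(\|M\|_\infty)+\|R\|_{\psi_k}$ via the triangle inequality and~\eqref{eq:sub-gauss-bounded} before absorbing constants into $C$.
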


\begin{remark}
To deduce Corollary~\ref{cor:classical-perturb} from Corollary~\ref{cor:classical}, set $A:= M+R$ and note that the $\ell$-central moments of the entries of $M+R$ obey
\begin{equation}\label{eq:central-moments}
\Ex \left\{ \left(M_{ij}+R_{ij} - \Ex \left\{M_{ij}+R_{ij} \right\} \right)^\ell \right\} = \Ex \left\{ (R_{ij} -\Ex R_{ij})^\ell \right\} 
\end{equation}
and thus coincide with those of $R$. Hence, applying Corollary~\ref{cor:classical} with $A := M + R$, coupled with the fact that  
$$
\|A\|_{\psi_k} = \|M +R\|_{\psi_k} \leq \|M\|_{\psi_k} + \|R\|_{\psi_k} \overset{\eqref{eq:sub-gauss-bounded}}{=} O(\|M\|_\infty) + \|R\|_{\psi_k}
$$
holds, whenever $k \in \{1,2\}$, delivers Corollary~\ref{cor:classical-perturb}. 
\end{remark}

If $\rho,\tau,\|R\|_{\psi_k}, \|M\|_\infty$ as well as $(\mu_i)_{i\in [4]}$, appearing in~\eqref{eq:m-R}, are all independent of $s$ and $n$, and $s \leq n^{1-\eps}$ holds for some arbitrarily small yet fixed $\eps >0$, then Corollary~\ref{cor:classical-perturb} carries the message that {\sl any} matrix $M$ (with $\|M\|_\infty$ as above) can be randomly (additively) perturbed (per entry) by a sub-gaussian or a sub-exponential matrix as to a.a.s. result in a matrix satisfying $\ell_2$-RNSP whilst asymptotically  realising the optimal bound~\eqref{eq:optimal-m}.

\bigskip
\noindent
{\bf Heavy-tailed perturbations.} Our main result, namely Theorem~\ref{thm::main-perturb}, is stated next. To that end, and following in the wake of~\eqref{eq:beyond-exp}, given $\kappa,\alpha >0$, as well as a positive integer $r$, all independent of one another, we refer to a random variable $X$ as being $(\kappa,\alpha,r)$-{\em reasonable} provided $\|X\|_p \leq \kappa p^{\alpha}$ 
holds, whenever $p \leq r$; if all moments of $X$ are so bounded (i.e., $r = \infty$), then the latter is referred to as $(\kappa,\alpha)$-reasonable. A matrix is said to be $(\kappa,\alpha,r)$-reasonable (respectively, $(\kappa,\alpha)$-reasonable) if each of its entries is $(\kappa,\alpha,r)$-reasonable (respectively, $(\kappa,\alpha)$-reasonable). Note that sub-gaussian random variables $X$ are $(O(\|X\|_{\psi_2}),1/2)$-reasonable and sub-exponential ones are $(O(\|X\|_{\psi_1}),1)$-reasonable; rendering the family of $(\kappa,\alpha,r)$-reasonable distributions as fairly wide and one which accommodates some rather heavy-tailed distributions. 

Given $A = (a_{ij}) \in \R^{m \times n}$ as well as $s \in [n]$, let
\begin{equation}\label{eq:local-frob}
\|A\|_{\frob,s} := \max_{\substack{S \subseteq [n]\\|S|=s}} \sqrt{\sum_{j \in S} \sum_{i=1}^m a_{ij}^2}
\end{equation}
denote the largest Frobenius norm witnessed by an $m \times s$ submatrix of $A$. 

\medskip
Our main result reads as follows. 

\begin{theorem}\label{thm::main-perturb}
Let $A \in \R^{m \times n}$ be $2$-central and such that $A - \Ex\{A\}$ is $(\kappa,\alpha,\log (en))$-reasonable for some $\kappa >0$ and $\alpha \geq 1/2$. Then, $A$ satisfies the $\ell_2$-$\mathrm{RNSP}(s,\rho,\tau)$ property with probability at least $1-\exp\left(-\Omega\left(\frac{\mu_2(A)^4 }{K(A)^2}\cdot m\right) \right)$, whenever $\rho \in (0,1)$, $\tau >0$, $s \in [n]$, and 
\begin{equation}\label{eq:m}
m \geq \max \left\{  C \left(\frac{K(A)}{\mu_2(A)^{5/2}\tau}\left(1+ \tau \rho^{-1} \Big(e^{2\alpha}\kappa \sqrt{s \log (en/s)} + F(A) \Big)\right)\right)^2, \left(\log (en)\right)^{\max\{2\alpha-1,1\}}\right\},
\end{equation}
where $F(A) := \min\{\|\Ex\{A\}\|_{\frob,s}, \sqrt{s\log n} \|\Ex\{A\}\|_{\infty}\}$, and $C > 0$ is some sufficiently large constant.
\end{theorem}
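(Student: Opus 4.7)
The plan is to establish $\ell_2$-$\mathrm{RNSP}(s,\rho,\tau)$ via Mendelson's small-ball method applied to the centered part $Z := A - \Ex\{A\}$, carrying the deterministic mean $B := \Ex\{A\}$ as an additive bias. First, I would invoke the standard cone reduction (cf.~\cite[Theorem~4.25]{FR13} and the argument preceding~\cite[Theorem~5.1]{SGR15}) which equates $\ell_2$-$\mathrm{RNSP}(s,\rho,\tau)$ for $A$ with the uniform lower bound $\inf_{\bv \in \mathcal{T}_{s,\rho}} \|A\bv\|_2 \geq 1/\tau$ over an \emph{effectively $s$-sparse} cone $\mathcal{T}_{s,\rho} \subseteq \mathbb{S}^{n-1}$ of $\ell_1$-diameter $O((1+\rho^{-1})\sqrt{s})$. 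Hence, the task reduces to producing a matching probabilistic lower bound on this infimum.

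The small-ball ingredient is delivered by Paley–Zygmund. Two-centrality gives $\Ex\langle\bz_i,\bv\rangle^2 = \mu_2(A)\|\bv\|_2^2$, while a direct moment expansion, coupled with $\|\bv\|_4^4 \leq \|\bv\|_2^4$ and $\|\bv\|_3^3 \leq \|\bv\|_2^3$, bounds $\Ex\langle\bz_i,\bv\rangle^4$ by an absolute multiple of $K(A)\|\bv\|_2^4$. Paley–Zygmund then yields
\[
\Pr\!\left\{\,|\langle\bz_i,\bv\rangle| \geq c\sqrt{\mu_2(A)}\,\|\bv\|_2\,\right\} \;\geq\; c'\cdot\frac{\mu_2(A)^2}{K(A)}
\]
uniformly in $\bv\in\mathbb{S}^{n-1}$, for absolute constants $c,c'>0$. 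A Bernstein/Chernoff step over the $m$ independent rows of $Z$ then upgrades the pointwise bound to a high-probability lower bound on $\|Z\bv\|_2$, with failure probability $\exp(-\Omega(\mu_2(A)^4/K(A)^2\cdot m))$ — matching the exponent advertised in the theorem.

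To pass from the pointwise estimate to a uniform one over $\mathcal{T}_{s,\rho}$, I would bound the mean Rademacher process $\mathcal{E}_m := \Ex\sup_{\bv\in\mathcal{T}_{s,\rho}}\bigl|\sum_{i=1}^m \eps_i\,\langle \bz_i,\bv\rangle\bigr|$ by generic chaining with moment-based Bernstein-type increments (in the spirit of Dirksen and of Mendelson's heavy-tailed extensions of Mendelson's small-ball method). The $(\kappa,\alpha,\log(en))$-reasonable hypothesis licenses $\|\langle \bz_i,\bv\rangle\|_p \leq \kappa p^{\alpha}\|\bv\|_2$ up to $p=\log(en)$, which, combined with the standard entropy estimate $\gamma_2(\mathcal{T}_{s,\rho},\|\cdot\|_2) \lesssim \sqrt{s\log(en/s)}$, yields $\mathcal{E}_m \lesssim e^{2\alpha}\kappa\sqrt{m\cdot s\log(en/s)}$; the truncation at $p=\log(en)$ is precisely what forces the side condition $m \geq (\log(en))^{\max\{2\alpha-1,1\}}$, ensuring the chaining depth sits within the regime of controlled moments. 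Concurrently, the deterministic shift $B$ enters as an additive bias, controlled by $\|B\bv\|_2 \leq \|B\|_{\frob,s}\|\bv\|_2$ for sparse $\bv$ (Cauchy–Schwarz on each row) and by $\|B\bv\|_2 \lesssim \|B\|_{\infty}\sqrt{s\log n}\,\|\bv\|_2$ along the $\ell_1$-cone (via $\ell_1 \to \ell_2$ duality against the rows of $B$); the minimum of the two delivers the $F(A)$ term.

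Assembling the ingredients yields a uniform lower bound of the form
\[
\inf_{\bv\in\mathcal{T}_{s,\rho}}\|A\bv\|_2 \;\geq\; c_1\cdot\frac{\mu_2(A)^{5/2}}{K(A)}\sqrt{m} \;-\; c_2\!\left(e^{2\alpha}\kappa\sqrt{s\log(en/s)} + F(A)\right),
\]
with failure probability $\exp(-\Omega(\mu_2(A)^4/K(A)^2\cdot m))$ once the side condition on $m$ holds. Demanding this dominate $1/\tau$ rearranges to the stated bound~\eqref{eq:m}. The main technical obstacle is the heavy-tailed chaining step: extracting the crisp scale $e^{2\alpha}\kappa\sqrt{s\log(en/s)}$ from a moment hypothesis that is valid only up to order $\log(en)$ requires cutting the chaining sum at the right depth and carefully interpolating between a sub-gaussian regime (at the low chaining levels) and a purely moment-controlled tail (at the high ones) — this is precisely what the auxiliary constraint $m \geq (\log(en))^{\max\{2\alpha-1,1\}}$ encodes, and getting it to play nicely with the small-ball-based probability bound (rather than a cruder union-bound) is the delicate part.
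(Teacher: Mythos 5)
Your strategy diverges from the paper's at a point that introduces a genuine error. You apply Mendelson's method to the centred matrix $Z := A - \Ex\{A\}$ and then pass to $A$ by the triangle inequality, absorbing $B := \Ex\{A\}$ as an additive bias controlled by the claimed deterministic bound $\|B\bv\|_2\lesssim\|B\|_\infty\sqrt{s\log n}\,\|\bv\|_2$ along the cone. That bound is false: take $B$ to be the all-ones matrix and $\bv$ the first standard basis vector, which belongs to $T_{\rho,s}$; then $\|B\bv\|_2 = \sqrt{m}$ while $\|B\|_\infty\sqrt{s\log n}=\sqrt{s\log n}$, so the claim fails by a factor of order $\sqrt{m/(s\log n)}$. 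The best deterministic bound available from $\ell_1$--$\ell_\infty$ duality row by row is $\|B\bv\|_2\lesssim\rho^{-1}\sqrt{ms}\,\|B\|_\infty$, with a $\sqrt{m}$ in place of $\sqrt{\log n}$, and that competing $\sqrt{m}$ on the loss side destroys the stated theorem. The $\sqrt{\log n}$ improvement is not a deterministic property of $B$: it comes from Rademacher cancellation, available only if the mean stays \emph{inside} the random process. The paper therefore applies Mendelson's method to $A$ itself; the mean cannot hurt the small-ball step because it only raises the second moment $\Ex\langle\ba_i,\bu\rangle^2\geq\mu_2(A)$ (Observation~\ref{lem:inner-prod}), and the decomposition $\ba_i = \bx_i + \bmu_i$ is carried out only inside the empirical width term $W$, where each coordinate of $\frac{1}{\sqrt{m}}\sum_i\xi_i\bmu_i$ is a sub-gaussian Rademacher sum with $\psi_2$-norm $O(\|B\|_\infty)$ and the maximum over $n$ coordinates is $O(\sqrt{\log n}\,\|B\|_\infty)$ (Lemma~\ref{lem::Z-part}). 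A triangle-inequality peel-off of the mean loses exactly this cancellation and with it the $\|B\|_\infty$ alternative of $F(A)$; note that the all-ones seed, precisely the case of~\cite{SJC19,SJC19b}, is one where only this alternative is useful (there $\|B\|_{\frob,s}=\Theta(\sqrt{ms})$).

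A secondary divergence: for the width of the centred part you invoke generic chaining with moment-controlled increments, whereas the paper deliberately avoids chaining and Dudley-type bounds. It conditions on the Rademacher signs, observes that the resulting vector has independent coordinates, and bounds $\Ex\{\|\cdot^*\|_{2,s}\}$ directly via a rearrangement argument (Lemmas~\ref{lem:2-norm-re} and~\ref{lem:sum-p-norm}, adapted from Mendelson's Lemma~6.5), which is also where the constraint $m\geq(\log(en))^{\max\{2\alpha-1,1\}}$ originates. Your chaining route could perhaps be made to work for the centred process, but the bias-handling error above is structural: to recover $F(A)$ as stated you must keep $\Ex\{A\}$ inside the Rademacher process, as the paper does.
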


\begin{remark}
In Theorem~\ref{thm::main-perturb}, reasonability is imposed on $A - \Ex\{A\}$ in order to facilitate a centring argument employed in its proof; one might as well impose reasonability on $A$ alone and mildly adapt our proof to fit this alternative. 
\end{remark}

Reasonable distributions for which $\alpha > 1$ holds are rather heavy-tailed; for these, the maximum seen on the right hand side of~\eqref{eq:m} is determined by its second term and thus provide a quantitive measure through which asymptotic departure from the optimal bound~\eqref{eq:optimal-m} for $m$ can be gauged. If, however, $1/2 \leq \alpha \leq 1$ holds, then the optimal bound~\eqref{eq:optimal-m} for $m$ is asymptotically met provided that $F(A) = O\left(\sqrt{s\log(en/s)}\right)$ and all other terms involved are independent of $s$ and $n$. 

\begin{remark}
To deduce Corollary~\ref{cor:classical} from Theorem~\ref{thm::main-perturb}, recall first that $\Ex\{|X|\} = O(\|X\|_{\psi_k})$ holds for any random variable whenever $k \in \{1,2\}$; then, proceed to set $\alpha = 1/2$, to retrieve the sub-gaussian setting, and $\alpha =1$, for the sub-exponential case. In these settings, $\kappa$ coincides with $\|A\|_{\psi_2}$ in the sub-gaussian case and with $\|A\|_{\psi_1}$ in the sub-exponential one.
\end{remark}

\begin{remark}\label{rem:two-alt}
The two alternatives available through $F(A)$ are of different natures; the first imposes a more global condition on the matrix to satisfy whilst the second has a clear local flavour.       
For the sake of brevity, Corollary~\ref{cor:classical} does not utilise the full strength, so to speak, of Theorem~\ref{thm::main-perturb}; from the two alternatives available through $F(A)$, Corollary~\ref{cor:classical} retains the local alternative only. In the context of Smoothed Analysis, however, both alternatives have merit as each imposes rather different restrictions on the deterministic seed matrix.
\end{remark}

% \begin{remark}\label{rem::first-central-moment}
% As indicated above, for any random matrix $A$, the equality $\mu_1(A) = 0$ holds trivially and thus imposes no restrictions on the entries of $A$. 
% \end{remark}

% The fact that for any random variable $X$, the inequality 
% $$
% \Ex\{X\} \leq |\Ex\{X\}| \leq \Ex\{|X|\} = \|X\|_1 
% $$
% holds, means that the term $e^{2\alpha}\kappa + \|\Ex \{A\}\|_\infty$ seen on the right hand side of~\eqref{eq:m} can be replaced by $\Omega(\|A\|_{\psi_2})$ in the case that $A = (a_{ij})$ is sub-gaussian, and by $\Omega(\|A\|_{\psi_1})$ if $A$ is sub-exponential, where 
% $$
% \|A\|_{\psi_2} := \max_{\substack{i \in [m]\\ j \in [n]}} \|a_{ij}\|_{\psi_2} 
% $$
% and where its counterpart $\|A\|_{\psi_1}$ is defined in an analogous manner. 

% Unlike previous results~\cite{SGR15,KJ16,KJ16b,LM17,SJC19,SJC19b}, Theorem~\ref{thm::main-perturb} does not impose distributional identity over the entries (and/or rows) of the matrix; instead, the rather mild requirement that these share the same variance is the sole assumption of said nature. This, in turn, allows for the following Smoothed Analysis result to be deduced from Theorem~\ref{thm::main-perturb}.   

\begin{corollary}\label{cor::main-perturb}
Let $M \in \R^{m\times n}$ be an arbitrary yet fixed matrix and let $R \in \R^{m \times n}$ be $2$-central and such that $R - \Ex\{R\}$ is $(\kappa,\alpha,\log (en))$-reasonable for some $\kappa >0$ and $\alpha \geq 1/2$. Then, $M+R$ satisfies the $\ell_2$-$\mathrm{RNSP}(s,\rho,\tau)$ property with probability at least $1-\exp\left(-\Omega\left(\frac{\mu_2(R)^4 }{K(R)^2} \cdot m\right)\right)$, whenever $\rho \in (0,1)$, $\tau >0$, $s \in [n]$, and provided that $m \geq \max\{\ell_1,\ell_2\}$, where 
$$
\ell_1 := C \left(\frac{K(R)}{\mu_2(R)^{5/2}\tau}\left(1+ \tau \rho^{-1} \Big(e^{2\alpha}\kappa \sqrt{s \log (en/s)} + F(M+R)\Big)\right)\right)^2,
$$
where $C>0$ is a sufficiently large constant; and where
$$
\ell_2:= \left(\log (en)\right)^{\max\{2\alpha-1,1\}}.
$$
\end{corollary}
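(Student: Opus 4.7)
The plan is to derive Corollary~\ref{cor::main-perturb} as a direct reduction to Theorem~\ref{thm::main-perturb}, by setting $A := M+R$ and showing that every hypothesis and every parameter in the theorem's bound translates cleanly under the deterministic additive shift by $M$. This parallels exactly the deduction of Corollary~\ref{cor:classical-perturb} from Corollary~\ref{cor:classical} via~\eqref{eq:central-moments}, only now the local/global alternatives packaged into $F(\cdot)$ are retained.

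\smallskip
\noindent
\textbf{Step 1 (hypotheses).} I would first verify that $A := M+R$ meets the hypotheses of Theorem~\ref{thm::main-perturb}. Since $M$ is deterministic, for every entry one has
\[
A_{ij} - \Ex\{A_{ij}\} = (M_{ij}+R_{ij}) - (M_{ij}+\Ex\{R_{ij}\}) = R_{ij} - \Ex\{R_{ij}\},
\]
so all central moments of $A_{ij}$ coincide with those of $R_{ij}$. In particular, $A$ inherits $2$-centrality from $R$, and the $(\kappa,\alpha,\log(en))$-reasonability imposed on $R-\Ex\{R\}$ is exactly the reasonability hypothesis demanded by Theorem~\ref{thm::main-perturb} for $A - \Ex\{A\}$.

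\smallskip
\noindent
\textbf{Step 2 (parameter matching).} Next I would translate the quantitative parameters. Shift-invariance of central moments yields $\mu_k(A) = \mu_k(R)$ for every $k \geq 2$, whence $K(A) = K(R)$ and $\mu_2(A) = \mu_2(R)$. These identities identify the failure-probability exponent $\mu_2(A)^4/K(A)^2$ with $\mu_2(R)^4/K(R)^2$, and identify the first term of the maximum in~\eqref{eq:m} with $\ell_1$, once $F(A)$ is rewritten as $F(M+R) = \min\{\|M+\Ex\{R\}\|_{\frob,s},\, \sqrt{s\log n}\,\|M+\Ex\{R\}\|_\infty\}$. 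The second term of the maximum in~\eqref{eq:m} equals $\ell_2$ verbatim, as it depends only on $\alpha$ and $n$.

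\smallskip
\noindent
\textbf{Step 3 (invocation).} With the hypotheses checked and the parameters matched, invoking Theorem~\ref{thm::main-perturb} applied to $A := M+R$ immediately yields Corollary~\ref{cor::main-perturb}.

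\smallskip
Because the entire argument consists of syntactic substitutions licensed by the elementary fact that central moments are invariant under deterministic shifts, I do not anticipate a serious obstacle. The only point worth flagging is the deliberate placement of the reasonability hypothesis on the centred matrix $R-\Ex\{R\}$ rather than on $R$ itself: this is precisely what allows the reduction to proceed without any additional centring step, since $A - \Ex\{A\} = R - \Ex\{R\}$ holds entrywise regardless of $M$.
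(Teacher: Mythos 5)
Your proposal is correct and follows precisely the paper's own reduction: set $A := M+R$, observe that $A - \Ex\{A\} = R - \Ex\{R\}$ entrywise so that all central moments (hence $\mu_2$, $K$) and the reasonability hypothesis carry over unchanged, and invoke Theorem~\ref{thm::main-perturb}. The explicit check that $F(A) = F(M+R)$ and that $\ell_2$ depends only on $\alpha$ and $n$ is a welcome spelling-out of details the paper leaves implicit.
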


\begin{remark}
To deduce Corollary~\ref{cor::main-perturb} from Theorem~\ref{thm::main-perturb}, note that in addition to~\eqref{eq:central-moments}, and by the same token, 
$$
\|M+R - \Ex\{M+R\}\|_p = \|R - E\{R\}\|_p
$$
holds as well. Corollary~\ref{cor::main-perturb} then follows by applying Theorem~\ref{thm::main-perturb} to $A:= M+R$. 
\end{remark}

\begin{remark}
In the case of the so-called classical perturbations, seen in Corollary~\ref{cor:classical-perturb}, an emphasis is put solely on $\|M\|_\infty$; this is done for the sake brevity only. For indeed, Theorem~\ref{thm::main-perturb} allows for two alternatives to be imposed on the seed matrix; the second is captured through the localised Frobenius norm $\|M\|_{\frob,s}$.   
\end{remark}

\subsubsection{Overarching argument}\label{sec:approach} Theorem~\ref{thm::main-perturb} is proved through the same high-level conceptual approach utilised in~\cite{SGR15,KJ16b,KJ16,SJC19,SJC19b}; this is detailed next. 
For an integer $s \in [n]$ and a vector $\bv \in \R^n$, write $S_{\max} := S_{\max}(\bv,s) \subseteq [n]$ to denote the set of $s$ indices holding the $s$ largest entries of $\bv$ in absolute value; set $S_{\min} := [n] \sm S_{\max}$. A matrix $A \in \R^{m \times n}$ satisfies $\ell_2-\mathrm{RNSP}(s,\rho,\tau)$ provided that
\begin{equation}\label{eq:rnsp-2}
\| \bv_{S_{\max}}\|_2 \overset{\eqref{eq:rnsp}}{\leq} \frac{\rho}{\sqrt{s}} \|\bv_{S_{\min}}\|_1 + \tau \|A\bv\|_2 
\end{equation}
holds for every $\bv \in \R^n$. This inequality, remaining invariant under rescaling of $\bv$, allows one to assume that $\|\bv\|_2 = 1$. Any vector $\bv \in \R^n$ satisfying 
\begin{equation}\label{eq:Tps}
\bv \notin T_{\rho,s} := \left\{\bu \in \R^n: \|\bu\|_2 =1, \|\bu_{S_{\max}}\|_2 > \frac{\rho}{\sqrt{s}} \|\bu_{S_{\min}}\|_1\right\}
\end{equation}
satisfies~\eqref{eq:rnsp-2}. It follows that $A$ satisfies $\ell_2-\mathrm{RNSP}(s,\rho,\tau)$ whenever
\begin{equation}\label{eq:rnsp-goal}
\inf\left\{ \|A\bv\|_2: \bv \in T_{\rho,s} \right\} > \tau^{-1}
\end{equation} 
holds. 

To establish Theorem~\ref{thm::main-perturb}, we prove that the matrix $A$, per that theorem, satisfies~\eqref{eq:rnsp-goal} with the probability stated in said theorem, using (a slight adaptation of) {\sl Mendelson's method}, namely Theorem~\ref{thm::Mendelson}. This method is employed in~\cite{SGR15,KJ16b,KJ16,SJC19,SJC19b}. Our implementation of this method differs substantially from these previous instantiations; details follow in the next section. 

%of this method and thus limiting the similarity between our work and that of our predecessors

\subsubsection{Our contribution}\label{sec:contribution} 

\medskip
\noindent
{\bf Smoothed Analysis for RNSPs.} As far as we could ascertain, our results presented above are the first to introduce the influential framework of Smoothed Analysis to the rich realm of Compressed Sensing in a systematic manner. In the vein of Smoothed Analysis, in order to realise the optimality seen in~\eqref{eq:optimal-m}, the sole restriction (implicitly) imposed on the seed matrix $M$, in Corollary~\ref{cor:classical-perturb}
is that $\|M\|_\infty$ must be independent of the dimensions of $M$ (i.e., be dimension-free) as well as from $s$ - the sparsity of the signal being reconstructed; the impositions placed on the seed matrix in Corollary~\ref{cor::main-perturb} are discussed in Remark~\ref{rem:two-alt}. Collectively, the restrictions imposed on the seed matrix encountered throughout our results, are significantly milder compared to those seen in~\cite{KR19,SJC19,SJC19b}. Indeed, in~\cite{SJC19,SJC19b}, all entries of the seed matrix $M$ are mandated to be equal to the {\sl same} constant; in~\cite{KR19} a stable-rank assumption is imposed. 

% Corollary~\ref{cor::main-perturb} stipulates that the fattening of the tails of the entries of $R$ entails, in part, the imposition $\|M\|_{\frob,s} = O\left(\sqrt{s \log(en/s)}\right)$ if the optimal bound~\eqref{eq:optimal-m} for $m$ is to be reached. Compared to the condition incurred by $M$ in the sub-gaussian and sub-exponential settings, i.e. that $\|M\|_\infty$ is to be dimension-free, here the restriction on $M$ is more global in nature, so to speak. Indeed, it is expected that allowing the perturbing matrix to have a wilder nature, so to speak, should result in stricter conditions to be imposed on the seed matrix. 

 Throughout our results, the seed matrix $M$ is allowed to retain a rather untamed and fairly arbitrary nature. The ubiquity claims made for NSPs, throughout the rather intensive aforementioned line of research dedicated to the study of said properties in truly random matrices, is then enhanced carrying the message that rather arbitrary fixed matrices (subject to requirements mentioned above) can be ``mended", through additive random perturbations, as to yield matrices satisfying a targeted NSP.

\medskip
\noindent
{\bf Comparison with previous results.}  With the exception of imposing the same variance across all entries of the matrix, our results, namely Theorem~\ref{thm::main-perturb} and corollaries thereof, strip all other distributional-similarity requirements present in previous results~\cite{SGR15,KJ16b,KJ16,LM17,SJC19,SJC19b}, where identical distributions are imposed on the entries/rows of the matrix. This departure from (strict) distributional-similarity has two crucial effects; the first is conceptual in the sense that it allows for the systematic introduction of Smoothed Analysis into Compressed Sensing, the second manifests itself in proof techniques. For instance, we manage (or compelled) to avoid appeals to Dudley's integral identity (see, e.g.,~\cite[Theorem~8.1]{Vershynin}) which is central in~\cite{SGR15,KJ16b,KJ16,SJC19,SJC19b}; we thus avoid any use of $\eps$-nets in our arguments.  

The sub-gaussian setting of Corollary~\ref{cor:classical} captures the main result of~\cite{KJ16b,KJ16}, dedicated to Bernoulli matrices. The results of~\cite{SJC19,SJC19b}, pertaining to rather restrictive sub-gaussian additive perturbation, allow for dependencies within each single row whilst Corollary~\ref{cor:classical-perturb} does not accommodate this. Every other feature in the results of~\cite{SJC19,SJC19b} is, however, captured and generalised in the sub-gaussian setting of said corollary. The sub-exponential setting of Corollary~\ref{cor:classical} captures and generalises the aforementioned results appearing in~\cite{SGR15,LM17} and provides Smoothed Analysis variants thereof.

As mentioned in Section~\ref{sec:approach}, the application of Mendelson's method through which the overarching argument seen in said section is executed is not new and has been utilised in~\cite{SGR15,KJ16b,KJ16,LM17,SJC19,SJC19b}. Nevertheless, the implementation details underpinning our application of Mendelson's method are the driving force leading to various gains compared to previous results. Roughly put, yet thoroughly explained in Section~\ref{sec:Mendelson} below, Mendelson's method is comprised of two core ingredients; the first is a {\sl small-ball probability} estimation, captured through~\eqref{eq:Q}, and the second is a measure for the so-called {\sl empirical width} of a certain ``random walk", captured through~\eqref{eq:W}, performed using the rows of the matrix along the set $T_{\rho,s}$, defined in~\eqref{eq:Tps}. In our application of Mendelson's method, we provide a {\sl distribution-free} small-ball probability estimate, seen in Proposition~\ref{prop:small-ball}, requiring only that the variance be identical across all entries and that their fourth moments be finite. It is our empirical width estimations that require tail-information. This conceptual separation is absent in~\cite{SGR15,KJ16b,KJ16,SJC19,SJC19b}. In~\cite[Theorem~A]{LM17}, a parametric lower bound on the small-ball probability required by the method is part of the assumptions of the theorem. In~\cite{SGR15,KJ16b,KJ16,SJC19,SJC19b}, tail properties are employed in both parts of Mendelson's method. 

%said estimation is carried out whilst relying on intimate knowledge of distributional properties of the entries of the matrix, our handling of this part of the method (see Proposition~\ref{prop:small-ball}) is {\sl distribution-free} and merely requires that the first four central moments of the entries be identical (which does not compel that the distributions of the entries coincide). This result can then be used in a manner decoupled from the distributional properties of the entries of the matrix.  

\section{Mendelson's method}\label{sec:Mendelson}

In this section, we state the so-called {\em Mendelson's method}~\cite{M15}; see~\cite[Section~5]{T15} for a broader insightful exposition. Casually stated, the crux of Mendelson's method is that it provides effective lower bounds for non-negative empirical processes, without the need for the involved distributions to exhibit strict concentration to expectation properties, allowing for the study of heavy-tailed distributions across a wide range of applications. 
 
Given $\eps > 0$ and random vectors $\bphi_1,\ldots,\bphi_m \in \mathbb{R}^n$ as well as a subset $E \subset \R^n$, let
\begin{equation}\label{eq:Q}
Q_\eps(E;\bphi_1,\ldots,\bphi_m) := \inf_{\substack{\bu \in E\\ i \in [m]}} \Pr\left\{ |\langle \bphi_i,\bu \rangle | \geq \eps \right\}. 
\end{equation}
The next quantity involved in said method is referred to as the {\em empirical width} of the set $E$ introduced above. In that, given independent Rademacher random variables $\xi_1,\ldots,\xi_m$ that are independent of the vectors $\bphi_1,\ldots,\bphi_m$, set
\begin{equation}\label{eq:W}
W(E;\bphi_1,\ldots,\bphi_m) := \Ex \left\{\sup_{\bu \in E} |\langle \bh, \bu \rangle| \right\},
\end{equation}
where 
\begin{equation}\label{eq:h}
\bh := \frac{1}{\sqrt{m}} \sum_{i=1}^m \xi_i \bphi_i. 
\end{equation}

The following result, namely Theorem~\ref{thm::Mendelson}, is a slight extension of Mendelson's result~\cite[Theorem~5.4]{M15}, where we follow the formulation seen in~\cite[Proposition~5.1]{T15}. The nature of our extension is divulged in Remark~\ref{rem::Mendelson-ext} below; the proof of Theorem~\ref{thm::Mendelson} is postponed until Appendix~\ref{app:Mendelson}.   
 
\begin{theorem}\label{thm::Mendelson}
Let $E \subset \R^n$ be fixed and let $\bphi_1,\ldots,\bphi_m \in \R^n$ be independent random vectors. Then, for any $\eps >0$ and $t >0$,
$$
\inf_{\bu \in E} \left( \sum_{i=1}^m \langle \bphi_i, \bu \rangle^2 \right)^{1/2} \geq \eps \sqrt{m} Q_{2\eps}(E;\bphi_1,\ldots,\bphi_m) - 2 W(E;\bphi_1,\ldots,\bphi_m ) - \eps t
$$
holds with probability at least $1-\exp(-t^2/2)$.
\end{theorem}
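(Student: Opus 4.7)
The plan is to reduce the $\ell_2$ lower bound to a uniform lower bound on an empirical sum of smoothed indicators, and then to control the deviation of that sum from its mean. I would introduce a Lipschitz ``tent'' function $\psi_\eps\colon \R \to [0,1]$ satisfying $\ind\{|x|\geq 2\eps\} \leq \psi_\eps(x) \leq \ind\{|x|\geq \eps\}$, with $\psi_\eps(0)=0$, Lipschitz constant $1/\eps$, and satisfying $|x| \geq \eps\,\psi_\eps(x)$ pointwise. Combining this last bound with Cauchy--Schwarz in the form $\|\bz\|_2 \geq \|\bz\|_1/\sqrt m$ yields the deterministic inequality
\begin{equation*}
\left(\sum_{i=1}^m \langle \bphi_i, \bu\rangle^2\right)^{1/2} \;\geq\; \frac{\eps}{\sqrt m}\sum_{i=1}^m \psi_\eps(\langle\bphi_i,\bu\rangle),
\end{equation*}
valid for every realization and every $\bu \in E$. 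Since $\psi_\eps \geq \ind\{|\cdot|\geq 2\eps\}$, one has $\Ex\,\psi_\eps(\langle\bphi_i,\bu\rangle) \geq Q_{2\eps}(E;\bphi_1,\ldots,\bphi_m)$ uniformly in $i$ and $\bu$, so $\Ex \sum_i \psi_\eps \geq m\,Q_{2\eps}$---precisely the leading term of the theorem after the prefactor $\eps/\sqrt m$.

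Next I would control the uniform one-sided deviation $Z := \sup_{\bu\in E}\big(\Ex\sum_i\psi_\eps(\langle\bphi_i,\bu\rangle) - \sum_i \psi_\eps(\langle\bphi_i,\bu\rangle)\big)$ in two stages. First, Gin\'e--Zinn symmetrization bounds $\Ex Z$ by a constant times $\Ex\sup_{\bu\in E}|\sum_i \xi_i \psi_\eps(\langle\bphi_i,\bu\rangle)|$; the Ledoux--Talagrand contraction principle, applicable since $\psi_\eps$ is $(1/\eps)$-Lipschitz with $\psi_\eps(0)=0$, then peels off $\psi_\eps$ at a $1/\eps$ cost and returns the empirical width,
\begin{equation*}
\Ex Z \;\leq\; \frac{c}{\eps}\,\Ex\sup_{\bu\in E}\Big|\sum_{i=1}^m \xi_i \langle\bphi_i,\bu\rangle\Big| \;=\; \frac{c\sqrt m}{\eps}\,W(E;\bphi_1,\ldots,\bphi_m)
\end{equation*}
for an absolute constant $c$. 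Second, McDiarmid's bounded-differences inequality applies because $\psi_\eps\in[0,1]$ means that altering any single $\bphi_i$ perturbs $Z$ by at most $1$; this yields $Z \leq \Ex Z + O(\sqrt m\,t)$ with probability at least $1 - \exp(-t^2/2)$. Rescaling by $\eps/\sqrt m$ transports both estimates to the quantity of interest and produces the stated bound $\eps\sqrt m\,Q_{2\eps} - 2W - \eps t$.

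The main obstacle I anticipate is the careful bookkeeping of multiplicative constants along the symmetrization--contraction--McDiarmid chain so that the $W$ term emerges with coefficient exactly $2$; this is the standard but fiddly accounting, typically tightened by invoking one-sided symmetrization and the symmetry of the tent function. A secondary, conceptually important point is the ``slight extension'' advertised in the theorem relative to~\cite{M15,T15}: the $\bphi_i$ are required only to be independent, not identically distributed. Definition~\eqref{eq:Q} already takes the infimum over $i$, so that $Q_{2\eps}$ uniformly lower bounds the per-coordinate small-ball probabilities, and each of the three ingredients above (Gin\'e--Zinn symmetrization, Ledoux--Talagrand contraction, and McDiarmid's inequality) accepts independent but non-identical summands. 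The argument therefore transplants verbatim to this broader setting.
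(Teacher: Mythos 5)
Your proposal is correct and takes essentially the same route as the paper: Cauchy--Schwarz gives the deterministic lower bound, a bounded-differences (McDiarmid) argument handles concentration of the uniform deviation, and symmetrisation together with a Lipschitz-contraction step reduces the expected deviation to the empirical width $W$. The paper organises the steps slightly differently: it lower-bounds $\big(\sum_i\langle\bphi_i,\bu\rangle^2\big)^{1/2}$ by $\tfrac{\eps}{\sqrt m}\sum_i\ind\{|\langle\bphi_i,\bu\rangle|\geq\eps\}$ using the hard indicator, applies McDiarmid to $\sup_{\bu}\sum_i\big(\Pr\{|\langle\bphi_i,\bu\rangle|\geq 2\eps\}-\ind\{|\langle\bphi_i,\bu\rangle|\geq\eps\}\big)$ (whose summands are not centred, since the two thresholds differ), and only then introduces the tent function $\psi_\eps$ through the sandwich $\ind\{|\cdot|\geq 2\eps\}\leq\psi_\eps\leq\ind\{|\cdot|\geq\eps\}$ to bound the expected supremum via symmetrisation (Lemma~\ref{lem:sym}) and the contraction-type bound $\eps\psi_\eps(x)\leq|x|$. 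Your variant, which uses $\psi_\eps$ already in the initial deterministic lower bound and then works with the genuinely centred deviation $Z=\sup_{\bu}\big(\Ex\sum_i\psi_\eps-\sum_i\psi_\eps\big)$, is a somewhat cleaner repackaging of the same argument and avoids the sandwich step. You also invoke the Ledoux--Talagrand contraction principle by name where the paper's justification of the $1/\eps$ peeling is terse; that is the right citation. The one point to actually verify in your version is the claimed coefficient $2$ on $W$: a naive application of two-sided symmetrisation (factor $2$) followed by the contraction principle with absolute values (another factor $2$) yields $4W$, so to match the statement you need the tighter one-sided forms of these inequalities, as you already anticipate; your McDiarmid step (with $c_i\leq 1$ since $\psi_\eps\in[0,1]$) is if anything slightly tighter than the paper's $c_i\leq 2$ and leaves you some slack on the $\eps t$ term to absorb the accounting. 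Your observation about the i.i.d.\ relaxation is also exactly the point of Remark~\ref{rem::Mendelson-ext}: since $Q_{2\eps}$ is already an infimum over $i$, and symmetrisation, contraction, and bounded differences all tolerate independent non-identical summands, the proof carries over verbatim.
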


\begin{remark}\label{rem::Mendelson-ext}
In the original result of Mendelson~\cite[Theorem~5.4]{M15}, the vectors $\bphi_1,\ldots,\bphi_m$ are mandated to be identically distributed. This restriction does not fit our needs and the removal of this requirement in Theorem~\ref{thm::Mendelson} constitutes our adaptation of said method. 
\end{remark}

\section{Proof of the main result: Theorem~\ref{thm::main-perturb}}

\subsection{Core ingredients}  In this section, we state Propositions~\ref{prop:small-ball} and~\ref{prop::W}, which constitute the core ingredients for our proof of Theorem~\ref{thm::main-perturb}. These two propositions constitute our implementation of Mendelson's method, namely Theorem~\ref{thm::Mendelson}. Proposition~\ref{prop:small-ball} handles the small-ball probability part involved in said method; the latter being distribution-free is the sole reason for us being able to apply Mendelson's method for a wide range of distributions. Proposition~\ref{prop::W} handles the empirical width aspect of said method. 

\medskip

The following is our aforementioend distribution-free small-ball probability result. A random variable $X$ satisfying $|\mu_p(X)| < \infty$ is said to be $p$-{\em bounded}; a simple application of H\"older's inequality\footnote{Let $\bx,\by \in \R^n$ as well as $p,q \in [1,\infty]$ satisfying $p^{-1} + q^{-1}=1$ be given. Then, $|\inner{\bx}{\by}| \leq \|\bx\|_p\|\by\|_q$.} delivers that if $X$ is $q$-bounded, then it is also $p$-bounded for every $1\leq p < q$. Sub-gaussian, sub-exponential, and $(\kappa,\alpha,r)$-reasonable distributions, with $r \geq 4$, are all $4$-bounded.
If all entries of a random vector/matrix are $p$-bounded, then the latter is said to be $p$-{\em bounded} as well. 

\begin{proposition}\label{prop:small-ball}
Let $A \in \R^{m \times n}$ be a $2$-central and $4$-bounded random matrix with independent entries. Then, for every $\rho \in (0,1)$ and any $s \in [n]$, the equality 
$$
Q_{2\eps}\left(T_{\rho,s}; \ba_1,\ldots,\ba_m \right) = \Omega \left( \frac{\mu_2(A)^2}{K(A)}\right)
$$
holds, where $(\ba_i)_{i \in [m]}$ denote the rows of $A$, $\eps := \sqrt{\mu_2(A)}/4$, $T_{\rho,s}$ is as per~\eqref{eq:Tps}, and where $Q_{2\eps}$ is as defined in~\eqref{eq:Q}.  
\end{proposition}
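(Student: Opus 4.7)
Fix $\bu \in T_{\rho,s}$ (so in particular $\|\bu\|_2 = 1$) and $i \in [m]$, and set $Z := \langle \ba_i, \bu\rangle$. The target is to show $\Pr\{|Z| \geq 2\eps\} = \Omega(\mu_2(A)^2/K(A))$, where $2\eps = \sqrt{\mu_2(A)}/2$. My approach is to apply the Paley--Zygmund inequality \emph{directly} to the non-negative random variable $Z^2$, rather than to its centred version $(Z - \Ex Z)^2$. Working with $Z^2$ handles a non-centred matrix $A$ in one stroke, sidestepping the obstruction that, when $|\Ex Z|$ is comparable to $\sqrt{\mu_2(A)}$, the centred argument only controls $|Z - \Ex Z|$ and not $|Z|$ itself.

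The computation then proceeds as follows. By $2$-centrality, independence of entries, and $\|\bu\|_2=1$, one has $\Ex Z^2 = (\Ex Z)^2 + \mu_2(A) \geq \mu_2(A)$, so the threshold $4\eps^2 = \mu_2(A)/4$ is at most $\Ex Z^2/4$; Paley--Zygmund then yields $\Pr\{Z^2 \geq \mu_2(A)/4\} \geq \tfrac{9}{16} (\Ex Z^2)^2/\Ex Z^4$. It remains to upper-bound $\Ex Z^4$. Writing $Z = \mu + W$ with $\mu := \Ex Z$ and $W$ the sum of independent centred entries of the $i$th row scaled by the coordinates of $\bu$, I would expand $\Ex Z^4 = \mu^4 + 6\mu^2 \Ex W^2 + 4\mu\Ex W^3 + \Ex W^4$ and use independence together with $\|\bu\|_2 = 1$ and $\|\bu\|_\infty \leq 1$ to derive $\Ex W^2 = \mu_2(A)$, $|\Ex W^3| \leq \mu_3(A) \sum_j |u_j|^3 \leq \mu_3(A)$, and $\Ex W^4 \leq \mu_4(A) + 3\mu_2(A)^2$ via the usual moment expansion of an independent centred sum (only the ``all-equal'' and ``matched-pairs'' index patterns survive).

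The last step is to absorb the cross-term $4\mu\Ex W^3$: by AM--GM together with the Cauchy--Schwarz consequence $\mu_3(A)^2 \leq \mu_2(A)\mu_4(A)$, one has $4|\mu|\mu_3(A) \leq 2\mu^2\mu_2(A) + 2\mu_4(A)$, giving an estimate of the form $\Ex Z^4 \leq \mu^4 + O(\mu^2 \mu_2(A)) + O(\mu_4(A) + \mu_2(A)^2)$. A short one-variable check (the ratio is monotone in $\mu^2 \geq 0$ because $\mu_4(A) \geq \mu_2(A)^2$) shows that $(\Ex Z^2)^2/\Ex Z^4 = (\mu^2 + \mu_2(A))^2/\Ex Z^4$ is minimised at $\mu = 0$, whence $(\Ex Z^2)^2/\Ex Z^4 \geq c\, \mu_2(A)^2/(\mu_4(A)+\mu_2(A)^2) \geq c\, \mu_2(A)^2/K(A)$ since every summand of $K(A)$ is non-negative. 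I expect the main obstacle to be the bookkeeping around $\Ex Z^4$, with the identity $\mu_3(A)^2 \leq \mu_2(A)\mu_4(A)$ being the critical tool that lets the $\mu_3$ cross-term be absorbed into the other terms. Notice that the hypothesis $\bu\in T_{\rho,s}$ is never used beyond $\|\bu\|_2=1$: the bound holds uniformly across the whole unit sphere, reflecting the ``distribution-free'' feature of the small-ball estimate highlighted in Section~\ref{sec:contribution}.
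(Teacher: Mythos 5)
Your proof is correct and follows essentially the same route as the paper: apply Paley--Zygmund to $\langle\ba_i,\bu\rangle^2$, lower-bound $\Ex\langle\ba_i,\bu\rangle^2$ by $\mu_2(A)$ via the variance decomposition, and control the moment ratio $\Ex\langle\ba_i,\bu\rangle^4/(\Ex\langle\ba_i,\bu\rangle^2)^2$ through a central-moment expansion --- exactly the chain Observation~\ref{lem:inner-prod}, Lemma~\ref{lem:deviation}, and Lemmas~\ref{lem:4-moment}--\ref{lem:4-moment-2}. The only difference is in the bookkeeping used to absorb the cubic cross-term $4\mu\,\Ex W^3$: you invoke AM--GM together with the Cauchy--Schwarz estimate $\mu_3(A)^2 \leq \mu_2(A)\mu_4(A)$ and close with a monotonicity check in $\mu^2$, whereas the paper uses the elementary inequality $x \leq 1 + x^2$ and the lower bound $(\Ex\langle\bx,\bz\rangle^2)^2 \geq \max\{\mu_2(A)^2, 2\mu_2(A)\langle\bmu,\bz\rangle^2\}$; both deliver the same $\Omega\!\left(\mu_2(A)^2/K(A)\right)$ lower bound.
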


%In view of~\cite{SGR15,KJ16,KJ16b,LM17,SJC19,SJC19b}, and as mentioned above, Proposition~\ref{prop:small-ball} is somewhat surprising in the sense that it makes distributional assumptions on the entries of $A$ and yet is capable of delivering a small-ball probability result whilst counterparts of which in~\cite{SGR15,KJ16,KJ16b,LM17,SJC19,SJC19b} do rely on such properties. This independence of Proposition~\ref{prop:small-ball} from tail probabilities accompanying the entries of $A$ allows us to apply this proposition in a rather wide context and consequently handle sub-gaussian as well as sub-exponential entries in one stroke. 

The next result pertains to the empirical width component in Mendelson's method fitting the setting of Theorem~\ref{thm::main-perturb}. 

\begin{proposition}\label{prop::W}
Let $\ba_1,\ldots,\ba_m \in \R^n$ be the rows of a random matrix $A \in \R^{m \times n}$ whose entries are pairwise independent and for which $A - \Ex\{A\}$ is 
$(\kappa,\alpha,\log (en))$-reasonable for some $\kappa >0$ and $\alpha \geq 1/2$. Then,
$$
W(T_{\rho,s};\ba_1,\ldots,\ba_m) = O\left(\rho^{-1} e^{2\alpha} \kappa 
\sqrt{s \log \left(\frac{en}{s} \right)}+\rho^{-1}F(A)\right),
$$ 
where $F(A) = \min\{\|\Ex\{A\}\|_{\frob,s}, \sqrt{s\log n} \|\Ex\{A\}\|_{\infty}\}$, and whenever $s \in [n]$,$m \geq \left( \log(en) \right)^{\max\{2\alpha - 1, 1 \}}$,  $\rho \in (0,1)$ and $T_{\rho,s}$ is as per~\eqref{eq:Tps}.
\end{proposition}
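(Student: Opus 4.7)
The plan is to write $\bh = \bh' + \bh''$ with
\[
\bh' := \tfrac{1}{\sqrt{m}}\sum_{i=1}^m \xi_i\bigl(\ba_i - \Ex\{\ba_i\}\bigr), \qquad \bh'' := \tfrac{1}{\sqrt{m}}\sum_{i=1}^m \xi_i\,\Ex\{\ba_i\},
\]
and to bound the two resulting empirical-width contributions separately after reducing each to an $s$-subset supremum $\max_{|S|=s}\|g_S\|_2$ via cone peeling. From~\eqref{eq:Tps}, every $\bu \in T_{\rho,s}$ satisfies $\|\bu_{S_{\min}}\|_1 \le \sqrt{s}/\rho$, whence $\|\bu\|_1 \le 2\sqrt{s}/\rho$. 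Sort the coordinates of $\bu_{S_{\min}}$ in decreasing order of magnitude and partition them into successive blocks $S_1, S_2, \ldots$ of size $s$ each; since entries of $S_k$ are bounded by the $s$-average $\|\bu_{S_{k-1}}\|_1/s$, $\|\bu_{S_k}\|_2 \le \|\bu_{S_{k-1}}\|_1/\sqrt{s}$, and telescoping gives $\sum_{k\ge 1}\|\bu_{S_k}\|_2 \le \|\bu\|_1/\sqrt{s} \le 2/\rho$. Writing $\inner{g}{\bu} = \sum_{k\ge 0}\inner{g_{S_k}}{\bu_{S_k}}$ with $S_0 := S_{\max}$ and applying Cauchy-Schwarz blockwise yields, for every $g \in \R^n$,
\[
|\inner{g}{\bu}| \le \max_{|S|=s}\|g_S\|_2\cdot\Bigl(\|\bu_{S_{\max}}\|_2 + \sum_{k\ge 1}\|\bu_{S_k}\|_2\Bigr) \le 3\rho^{-1}\max_{|S|=s}\|g_S\|_2
\]
(using $\rho<1$). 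It therefore suffices to bound $\Ex\max_{|S|=s}\|g_S\|_2$ for each of $g \in \{\bh', \bh''\}$.

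For $g = \bh''$, two complementary bounds together give $\Ex\max_{|S|=s}\|\bh''_S\|_2 \le C\,F(A)$. The deterministic one exploits $\|\bxi\|_2 = \sqrt{m}$: from $\bh''_S = \tfrac{1}{\sqrt{m}}(\Ex\{A\})_{\cdot,S}^T\bxi$ one gets $\|\bh''_S\|_2 \le \|(\Ex\{A\})_{\cdot,S}\|_{\mathrm{op}} \le \|(\Ex\{A\})_{\cdot,S}\|_{\frob} \le \|\Ex\{A\}\|_{\frob,s}$. The probabilistic one combines $\|\bh''_S\|_2 \le \sqrt{s}\|\bh''\|_\infty$ with the Khintchine-Kahane estimate $\|\bh''_j\|_p \le C\sqrt{p}\,\|(\Ex\{A\})_{\cdot,j}\|_2/\sqrt{m} \le C\sqrt{p}\,\|\Ex\{A\}\|_\infty$ and the standard $p$-th-moment bound $\Ex\|\bh''\|_\infty \le e\max_j\|\bh''_j\|_{\log(en)}$, yielding $\Ex\max_{|S|=s}\|\bh''_S\|_2 \le C\sqrt{s\log n}\,\|\Ex\{A\}\|_\infty$; taking the pointwise minimum then closes this branch.

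For $g = \bh'$, Cauchy-Schwarz and the layer-cake identity deliver
\[
\Ex\max_{|S|=s}\|\bh'_S\|_2 \le \Bigl(\int_0^\infty \min\!\bigl\{s,\,{\textstyle\sum_{j=1}^n}\Pr(|\bh'_j|\ge\sqrt{t})\bigr\}\,dt\Bigr)^{1/2}.
\]
Each $\bh'_j$ is a normalised sum of $m$ independent centred $(\kappa,\alpha,\log(en))$-reasonable random variables, symmetrised by the Rademacher signs $\xi_i$. A Rosenthal-type inequality yields, for $p \le \log(en)$,
\[
\|\bh'_j\|_p \le C\sqrt{p\,\mu_2(A)} + Cp^{\alpha+1}\kappa\,m^{1/p - 1/2},
\]
and the hypothesis $m \ge (\log(en))^{\max\{2\alpha-1,1\}}$ ensures the second (``Bernstein-type'') term is dominated by the first on this range. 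Together with $\sqrt{\mu_2(A)} \le 2^\alpha\kappa$ from reasonability at $p = 2$, this gives $\|\bh'_j\|_p \le Ce^\alpha\kappa\sqrt{p}$ for $p \le \log(en)$, and Markov's inequality then yields $\Pr(|\bh'_j|\ge u) \le \exp(-c u^2/(e^{2\alpha}\kappa^2))$ for $u \le Ce^\alpha\kappa\sqrt{\log(en)}$. Splitting the layer-cake integral at $T \asymp e^{2\alpha}\kappa^2\log(en/s)$, which lies within the sub-Gaussian range since $\log(en/s) \le \log(en)$, produces $O(se^{2\alpha}\kappa^2\log(en/s))$, so $\Ex\max_{|S|=s}\|\bh'_S\|_2 = O(e^\alpha\kappa\sqrt{s\log(en/s)})$. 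Combining with the peeling reduction yields the claimed bound $O\bigl(\rho^{-1}e^{2\alpha}\kappa\sqrt{s\log(en/s)} + \rho^{-1}F(A)\bigr)$.

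The main technical obstacle will be the Rosenthal-type moment bound for the centred sum $\bh'_j$ and the verification that the stated lower bound on $m$ exactly absorbs the tail-correction term on the entire admissible moment range $p \le \log(en)$. Identifying the crossover between the sub-Gaussian and sub-Weibull tail regimes for normalised sums of sub-Weibull-$(1/\alpha)$ summands, and tracking the $\alpha$-dependence through the moment bound, the Markov-derived sub-Gaussian tail, and the layer-cake integral --- so as to land precisely at $e^{2\alpha}$ --- is the bookkeeping-heavy step.
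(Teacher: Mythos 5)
Your high-level plan closely tracks the paper's proof: centre $\bh = \bh' + \bh''$, reduce the supremum over $T_{\rho,s}$ to an $s$-subset $\ell_2$-supremum, handle the mean part $\bh''$ by a deterministic Frobenius bound and a Khintchine--max-of-sub-Gaussians bound, and handle the centred part $\bh'$ by a moment bound followed by a layer-cake / rearrangement argument. The paper arrives at the same $3\rho^{-1}\max_{|S|=s}\|\cdot\|_2$ reduction by citing the inclusion $\mathrm{conv}(T_{\rho,s}) \subseteq (2+\rho^{-1})\mathrm{conv}(\Sigma_s^2)$ from \cite[Lemma 3.2]{SGR15} rather than re-deriving it via block peeling, and it handles the centred part via Lemmas~\ref{lem:2-norm-re} and~\ref{lem:sum-p-norm} (conditioning on $\bxi$), but these are stylistic rather than substantive differences.

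There is, however, a genuine gap in your treatment of $\bh'$. Your claimed Rosenthal-type bound
$\|\bh'_j\|_p \leq C\sqrt{p\,\mu_2(A)} + C\,p^{\alpha+1}\kappa\, m^{1/p-1/2}$
carries a factor of $p^{\alpha+1}$ in the ``Bernstein'' term, and the stated hypothesis $m \geq (\log(en))^{\max\{2\alpha-1,1\}}$ does \emph{not} suffice to have this term dominated by $C\sqrt{p\,\mu_2(A)}$ over the whole admissible range $2 \le p \le \log(en)$. Take $\alpha \geq 1$, $p = \log(en)$, and $m = (\log(en))^{2\alpha-1}$: then the ratio of the second term to the first is of order $\log(en)/2^\alpha$, which diverges as $n \to \infty$. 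The issue is that the classical Rosenthal inequality with a multiplicative constant of order $p$ (or even $p/\log p$) on the $\ell_p$-term is lossy by exactly one power of $p$; the correct tool here is the sharp Lata{\l}a moment inequality for sums of independent symmetric random variables, which replaces $p^{\alpha+1}\kappa\, m^{1/p-1/2}$ by roughly $p^{\alpha}\kappa\, m^{1/p-1/2}$ (no extra power of $p$). With that sharper bound, the domination does hold under $m \geq (\log(en))^{\max\{2\alpha-1,1\}}$, and this is exactly what the paper exploits via Lemma~\ref{lem:sum-p-norm} (an adaptation of \cite[Lemma 2.8]{LM17}, itself based on~\eqref{eq:Latala}). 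So your claim that ``the hypothesis on $m$ absorbs the tail-correction term'' is precisely the bookkeeping step that fails with the classical Rosenthal estimate; you need to replace it with the Lata{\l}a-type inequality. Once that substitution is made, your layer-cake argument (including the split at $T \asymp e^{2\alpha}\kappa^2\log(en/s)$ and the moments-to-tail conversion) is sound and recovers the stated bound, modulo a small point you gloss over: the sub-Gaussian tail is only valid for $u \lesssim e^\alpha\kappa\sqrt{\log(en)}$, so the tail of the integral beyond $t \gtrsim e^{2\alpha}\kappa^2\log(en)$ must be controlled separately by the polynomially-decaying Markov bound at $p = \log(en)$, as is done carefully in the paper's proof of Lemma~\ref{lem:2-norm-re} in Appendix~\ref{app:2-norm-re}.
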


%\begin{remark}
%The implicit constant appearing in the $O_\eps(\cdot)$ term seen in Proposition~\ref{prop::W} depends on $\eps^{-1}$.     
%\end{remark}

Postponing the proof of Proposition~\ref{prop:small-ball} until Section~\ref{sec:small-ball}, and that of Proposition~\ref{prop::W} until Section~\ref{sec:Rademacher}, we proceed, first, with the proof of Theorem~\ref{thm::main-perturb} assuming the former two propositions both hold true.

\subsection{Proof of Theorem~\ref{thm::main-perturb}} In this section, we prove Theorem~\ref{thm::main-perturb} under the assumption that Propositions~\ref{prop:small-ball} and~\ref{prop::W} both hold true. To that end, it suffices to verify that, in the setting of Theorem~\ref{thm::main-perturb}, inequality~\eqref{eq:rnsp-goal} holds with the probability stipulated in said Theorem. Mendelson's method, namely Theorem~\ref{thm::Mendelson}, applied with $\eps := \sqrt{\mu_2(A)}/4$, and coupled with Propositions~\ref{prop:small-ball} and~\ref{prop::W}, collectively assert that 
$$
\inf_{\bu \in T_{\rho,s}} \|A\bu\|_2 \geq \Omega\left( \frac{\mu_2^{5/2}(A)}{K(A)}\sqrt{m}\right) - O \left(\rho^{-1} \left(e^{2\alpha} \kappa \sqrt{s \log \left(\frac{en}{s} \right)} + F(A)\right) \right) - \sqrt{\mu_2(A)}t/4
$$ 
holds for all $t\geq 0$ with probability at least $1 - \exp(-t^2/2)$. Setting $t = c \frac{\mu_2(A)^2}{K(A)} \sqrt{m}$, where $c > 0$ is a sufficiently small constant, yields 
\begin{equation}\label{eq:mendelson-gauss}
\inf_{\bu \in T_{\rho,s}} \|A\bu\|_2 = \Omega\left( \frac{\mu_2^{5/2}(A)}{K(A)}\sqrt{m}\right) - O \left(\rho^{-1} \left(e^{2\alpha} \kappa \sqrt{s \log \left(\frac{en}{s} \right)}  +F(A)\right) \right)
\end{equation}
holds with probability at least
$$
1- \exp \left(- \Omega \left(\frac{\mu_2(A)^4}{K(A)^2} m \right) \right).
$$
Then, for
$$
m \geq \frac{CK(A)^2\left(1 + \tau \rho^{-1} \left(e^{2\alpha} \kappa \sqrt{s \log \left(\frac{en}{s} \right)} +F(A) \right)\right)^2}{\mu_2^5(A) \tau^2},
$$
where $C$ is a sufficiently large constant, the first term appearing on the right hand side of~\eqref{eq:mendelson-gauss} strictly exceeds the second term appearing on the same side of~\eqref{eq:mendelson-gauss} by an additive factor of at least $\tau^{-1}$ leading to the verification of~\eqref{eq:rnsp-goal} with the aforementioned probability. Theorem~\ref{thm::main-perturb} then follows. \hfill\QED

\section{Distribution-free small-ball probabilities}\label{sec:small-ball} In this section, we prove Proposition~\ref{prop:small-ball}. The following two results facilitate our proof of the latter. 

%A random vector $\bx \in \R^n$ entries of which have the same first $k$ central moments is called $k$-{\em central}; for $p \in [k]$, write $\mu_p(\bx)$ to denote the common $p$-central moment of the entries of $\bx$. 

\begin{observation}\label{lem:inner-prod}
Let $\bx \in \R^n$ be $2$-central with independent entries. Then, 
$$
\Ex \left\{ \langle\bx,\bz \rangle^2 \right\} = \mu_2(\bx)\|\bz\|_2^2 + \langle\Ex\{\bx\},\bz \rangle^2 
$$
holds, whenever $\bz \in \R^n$. 
\end{observation}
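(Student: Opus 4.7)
The plan is to prove the identity by decomposing $\langle \bx, \bz\rangle$ into its mean and its centred part, and then using bilinearity of the inner product together with independence of the coordinates. Concretely, write $\langle \bx, \bz\rangle = \langle \bx - \Ex\{\bx\}, \bz\rangle + \langle \Ex\{\bx\}, \bz\rangle$. The second summand is a deterministic constant, while the first is a random variable with mean zero (by linearity of expectation). Squaring this decomposition and taking expectations, the cross term vanishes, leaving
$$
\Ex\{\langle \bx,\bz\rangle^2\} = \Ex\{\langle \bx - \Ex\{\bx\}, \bz\rangle^2\} + \langle \Ex\{\bx\}, \bz\rangle^2,
$$
so it suffices to show that $\Ex\{\langle \bx - \Ex\{\bx\}, \bz\rangle^2\} = \mu_2(\bx) \|\bz\|_2^2$.

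For that remaining step, I would expand $\langle \bx - \Ex\{\bx\}, \bz\rangle^2 = \sum_{i,j} z_i z_j (x_i - \Ex\{x_i\})(x_j - \Ex\{x_j\})$ and take expectation term by term. Since the entries of $\bx$ are independent, off-diagonal terms $i\neq j$ factor as $\Ex\{x_i - \Ex\{x_i\}\}\cdot \Ex\{x_j - \Ex\{x_j\}\} = 0$, leaving only the diagonal. The diagonal contributes $\sum_i z_i^2 \,\mu_2(x_i)$, and $2$-centrality of $\bx$ (i.e., all coordinates share the same variance $\mu_2(\bx)$) allows this to be rewritten as $\mu_2(\bx) \|\bz\|_2^2$.

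There is no real obstacle here; the statement is essentially the formula $\Ex\{Y^2\} = \mathrm{Var}(Y) + (\Ex\{Y\})^2$ applied to the linear combination $Y = \langle \bx, \bz\rangle$, together with the fact that variance is additive for independent summands and that $2$-centrality makes the per-coordinate variances uniform. The only care required is to note at the start that $4$-boundedness (or at least finite variance of each $x_i$) is implicit from the hypothesis $\bx$ being $2$-central and used later in Proposition~\ref{prop:small-ball}, guaranteeing that all expectations above are well defined.
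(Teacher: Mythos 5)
Your proof is correct and follows essentially the same route as the paper's: decompose $\bx$ into its mean $\bmu$ plus the centred residual $\br = \bx - \bmu$, observe that the cross term in $\Ex\{(\inner{\br}{\bz}+\inner{\bmu}{\bz})^2\}$ vanishes by centredness, then expand $\Ex\{\inner{\br}{\bz}^2\}$ and use pairwise independence to kill off-diagonal terms and $2$-centrality to collapse the diagonal to $\mu_2(\bx)\|\bz\|_2^2$. Your closing remark that this is just $\Ex\{Y^2\} = \mathrm{Var}(Y) + (\Ex Y)^2$ with additivity of variance for independent summands is an accurate summary, and the caveat on finiteness of the variance is a fair (if implicit in the paper) point.
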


\begin{proof}
Set $\bmu := \Ex \{\bx\}$ and let $\br := \bx - \bmu$.     Then,
\begin{align*}
\Ex \left\{\inner{\bx}{\bz}^2\right\} & = \Ex \left\{\inner{\br+\bmu} {\bz}^2\right\} \\
& = \Ex \left\{\left( \inner{\br}{\bz} + \inner{\bmu}{\bz}\right)^2 \right\}\\
        & = \Ex \left\{ \inner{\br}{\bz}^2\right\} + 2\Ex\left\{\inner{\br}{\bz}\inner{\bmu}{\bz}\right\} + \inner{\bmu}{\bz}^2\\
        & = \Ex \left\{ \inner{\br}{\bz}^2\right\} +  \inner{\bmu}{\bz}^2,
\end{align*}
where for the last equality we rely on the entries of $\br$ being centred.  
Proceed to write 
\begin{align*}
\Ex \left\{ \inner{\br}{\bz}^2\right\} &= \Ex \left\{ \sum_{i,j \in [n]} \br_i \br_j\bz_i\bz_j \right\}\\
& = \sum_{i=1}^n \Ex\{\br_i^2\} \bz_i^2 + \sum_{\substack{i,j \in [n]\\ i\neq j}} \Ex\{\br_i\} \Ex\{\br_j\} \bz_i \bz_j \\
& = \mu_2(\bx) \|\bz\|_2^2,
\end{align*}
where in the second equality we rely on the entries of $\br$ being pairwise independent, and the last equality is owing to the entries of $\br$ being centred and to $\Ex\{\br_i^2\} = \mu_2(\bx)$ holding for every $i \in [n]$ on account of $\bx$ being $2$-central. The claim follows.
\end{proof} 

For a $4$-bounded $\bx \in \R^n$, set
$$
\tilde K(\bx) := 3 \mu_2(\bx)^2 + 4 \mu_3(\bx) +2 \mu_2(\bx)\mu_3(\bx) + \mu_4(\bx);
$$
while $\tilde K(\bx)$ and $K(\bx)$ do not coincide, they do have the same order of magnitude.

\begin{lemma}\label{lem:deviation}
Let $\bx \in \R^n$ be 
%$4$-central 
$2$-central and $4$-bounded
with independent entries. Then, 
$$
\Pr \Big\{ \langle \bx,\bz \rangle^2 \geq \theta \Ex \left\{ \langle \bx,\bz \rangle^2 \right\} \Big\} \geq (1-\theta)^2 \frac{\mu_2(\bx)^2}{\tilde K(\bx)}
$$
holds, whenever $\bz \in \mathbb{S}^{n-1}$ and $\theta \in [0,1]$. 
\end{lemma}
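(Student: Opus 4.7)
The plan is to apply the Paley--Zygmund inequality to the non-negative random variable $Z := \langle \bx, \bz\rangle^2$, namely
\[
\Pr\{Z \geq \theta \Ex\{Z\}\} \geq (1-\theta)^2 \frac{(\Ex\{Z\})^2}{\Ex\{Z^2\}}, \qquad \theta \in [0,1].
\]
It therefore suffices to establish the deterministic inequality $\tilde K(\bx) (\Ex\{Z\})^2 \geq \mu_2(\bx)^2 \Ex\{Z^2\}$. Observation~\ref{lem:inner-prod} coupled with $\|\bz\|_2 = 1$ already yields $\Ex\{Z\} = \mu_2(\bx) + M^2$ where $M := \langle \Ex\{\bx\}, \bz \rangle$, so the task reduces to controlling the fourth moment $\Ex\{Z^2\} = \Ex\{\langle \bx, \bz \rangle^4\}$.

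For this I would decompose $\bx = \br + \bmu$ with $\bmu := \Ex\{\bx\}$ and centred independent $\br$, set $R := \langle \br, \bz \rangle$, and expand
\[
\Ex\{(R+M)^4\} = \Ex\{R^4\} + 4 M \Ex\{R^3\} + 6 M^2 \Ex\{R^2\} + M^4.
\]
Independence and centring of the coordinates of $\br$ leave only diagonal contributions to $\Ex\{R^3\} = \sum_i z_i^3 \Ex\{r_i^3\}$, bounded in absolute value by $\mu_3(\bx) \sum_i |z_i|^3 \leq \mu_3(\bx)$ since $|z_i| \leq 1$ and $\sum_i z_i^2 = 1$. Similarly, only the all-equal and two-pair index patterns survive in $\Ex\{R^4\}$, so that $\Ex\{R^4\} \leq 3 \mu_2(\bx)^2 + \mu_4(\bx)$. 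Collecting these estimates,
\[
\Ex\{Z^2\} \leq 3 \mu_2(\bx)^2 + \mu_4(\bx) + 4 |M| \mu_3(\bx) + 6 M^2 \mu_2(\bx) + M^4.
\]

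Writing $a := M^2 \geq 0$, the target inequality $\tilde K(\bx) (\Ex\{Z\})^2 \geq \mu_2(\bx)^2 \Ex\{Z^2\}$ becomes a polynomial inequality in $\sqrt{a}$ whose only potentially negative summand is the cross term $-4 \mu_2(\bx)^2 \mu_3(\bx) \sqrt{a}$. The main obstacle, and the very reason for the slightly unusual form of $\tilde K$, is absorbing this single cross term: I would invoke the elementary bound $2\sqrt{a} \leq 1 + a$ to split $4 \mu_2(\bx)^2 \mu_3(\bx) \sqrt{a} \leq 2 \mu_2(\bx)^2 \mu_3(\bx) (1 + a)$ between the constant and linear contributions of $\tilde K(\bx) (\Ex\{Z\})^2 - \mu_2(\bx)^2 \Ex\{Z^2\}$. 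The summands $4\mu_3(\bx)$ and $2\mu_2(\bx)\mu_3(\bx)$ inside $\tilde K$ are tuned precisely so that after this absorption the residual is a manifest sum of non-negative terms (including the always non-negative $a^2$-coefficient $\tilde K(\bx) - \mu_2(\bx)^2$). The deterministic inequality, and hence the claim, follows.
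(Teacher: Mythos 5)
Your proof is correct and follows essentially the same route as the paper's: apply Paley--Zygmund to $Z = \inner{\bx}{\bz}^2$, compute $\Ex\{Z\}$ via Observation~\ref{lem:inner-prod}, expand $\Ex\{Z^2\}$ through the centring $\bx = \br + \bmu$, and close by a polynomial absorption. The paper organises the last step through the intermediate Lemmas~\ref{lem:4-moment-2} and~\ref{lem:4-moment}, absorbing the $\inner{\bmu}{\bz}$-dependence piece by piece against $\Ex\{Z\}^2$ using $\inner{\bmu}{\bz} \leq 1 + \inner{\bmu}{\bz}^2$, whereas you substitute $a = \inner{\bmu}{\bz}^2$ up front and close with $2\sqrt{a} \leq 1 + a$; these are the same device, and the calculations are equivalent. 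One small but genuine benefit of your bookkeeping: your estimate $\Ex\{\inner{\br}{\bz}^4\} \leq 3\mu_2(\bx)^2 + \mu_4(\bx)$ is unconditionally valid, while the paper's passage from Lemma~\ref{lem:4-moment-2} to~\eqref{eq:4th-moment} collapses $3\mu_2(\bx)^2\|\bz\|_2^4 + \big(\mu_4(\bx)-3\mu_2(\bx)^2\big)\|\bz\|_4^4$ to $\mu_4(\bx)$, which tacitly assumes $\mu_4(\bx) \geq 3\mu_2(\bx)^2$ and can fail (e.g.\ for Rademacher-type entries the bound should be $3\mu_2(\bx)^2$, not $\mu_4(\bx)$). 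Your looser but safe fourth-moment bound still yields the stated constant $\tilde K(\bx)$, as your final polynomial check confirms, so you in fact tidy up a minor gap in the paper's intermediate step without changing the conclusion.
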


Postponing the proof of Lemma~\ref{lem:deviation} until Section~\ref{sec:deviation}, we are in a position to prove Proposition~\ref{prop:small-ball}.

\begin{proofof}{Proposition~\ref{prop:small-ball}} Recalling that $\eps = \sqrt{\mu_2(A)}/4$, write 
\begin{align*}
Q_{2\eps}(T_{\rho,s}; \ba_1,\ldots,\ba_m) & = \inf_{\substack{\bu \in T_{\rho,s}\\ i \in [m]}} \Pr\left\{|\langle \ba_i,\bu \rangle| \geq  \sqrt{\mu_2(A)}/2\right\}\\
& = \inf_{\substack{\bu \in T_{\rho,s}\\ i \in [m]}} \Pr\left\{\langle \ba_i,\bu \rangle^2 \geq  \mu_2(A)/4\right\}.
\end{align*}
Since $\Ex\left\{\langle \ba_i,\bu \rangle^2 \right\} \geq \mu_2(A)$ holds for every $\bu \in T_{\rho,s}$ by Observation~\ref{lem:inner-prod}, we may proceed to write 
$$
Q_{2\eps}(T_{\rho,s},\ba_1,\ldots,\ba_m) \geq 
\inf_{\substack{\bu \in T_{\rho,s}\\ i \in [m]}} \Pr\left\{\langle \ba_i,\bu \rangle^2 \geq  \Ex\left\{\langle \ba_i,\bu \rangle^2 \right\}/4\right\}. 
$$
Lemma~\ref{lem:deviation}, applied with $\theta = 1/4$, then asserts that for every $\bu \in T_{\rho,s}$ and every $i \in [m]$ we have  
\begin{align*}
\Pr\Big\{\langle \ba_i,\bu \rangle^2 \geq  \Ex\left\{\langle \ba_i,\bu \rangle^2 \right\}/4\Big\} = \Omega \left(\frac{\mu_2(\ba_i)^2}{\tilde K(\ba_i)}\right) = \Omega \left( \frac{\mu_2(A)^2}{K(A)}\right), 
\end{align*}
where in the last equality we use the previously noted fact that $\tilde K(\ba_i) = \Theta(K(A))$.
\end{proofof}

\subsection{Proof of Lemma~\ref{lem:deviation}}\label{sec:deviation} The first result facilitating our proof of Lemma~\ref{lem:deviation} is the so-called Paley-Zygmund inequality asserting that 
$$
\Pr \Big\{ Z \geq \theta \Ex \{Z\} \Big\} \geq (1-\theta)^2 \frac{\Ex \{Z\}^2}{\Ex \{Z^2\}}
$$
holds, whenever $Z$ is a non-negative random variable with finite variance and $\theta \in [0,1]$. The second tool reads as follows. 

\begin{lemma}\label{lem:4-moment}
Let $\bx \in \R^n$ be $2$-central and $4$-bounded
%$4$-central vector 
and let $\bz \in \mathbb{S}^{n-1}$. Then, 
$$
\Ex \left\{\inner{\bx}{\bz}^4 \right\} \leq \frac{\tilde K(\bx)}{\mu_2(\bx)^2} \cdot \Ex \left\{ \inner{\bx}{\bz}^2 \right\}^2.
$$
\end{lemma}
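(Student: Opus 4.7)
The plan is to decompose $\bx = \br + \bmu$, where $\bmu := \Ex\{\bx\}$ and $\br := \bx - \bmu$ has independent, centred entries with $\mu_p(\br_i) = \mu_p(\bx_i)$; by $2$-centrality, $\Ex\{\br_i^2\} = \mu_2(\bx)$ for all $i$. Writing $a := \inner{\br}{\bz}$ and $b := \inner{\bmu}{\bz}$, one has $\inner{\bx}{\bz} = a+b$, and Observation~\ref{lem:inner-prod} (with $\|\bz\|_2 = 1$) gives $\Ex\{\inner{\bx}{\bz}^2\} = \mu_2(\bx) + b^2$. Expanding $(a+b)^4 = a^4 + 4a^3 b + 6a^2 b^2 + 4ab^3 + b^4$ and using $\Ex\{a\} = 0$ to kill the $ab^3$-term reduces the task to bounding $\Ex\{a^k\}$ for $k \in \{2,3,4\}$.

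The key moment bounds, obtained by expanding the sums $a^k = \sum \br_{i_1}\cdots\br_{i_k}\bz_{i_1}\cdots\bz_{i_k}$ and using independence together with $\Ex\{\br_i\} = 0$, are: $\Ex\{a^2\} = \mu_2(\bx)$; only the diagonal indices survive in $\Ex\{a^3\}$, yielding $|\Ex\{a^3\}| \leq \mu_3(\bx) \|\bz\|_3^3 \leq \mu_3(\bx)$ by H\"older's $\|\bz\|_3^3 \leq \|\bz\|_2 \|\bz\|_4^2 \leq 1$; and for $\Ex\{a^4\}$ only the diagonal and the three two-pair patterns contribute, producing
$$
\Ex\{a^4\} = \sum_i \mu_4(\br_i) \bz_i^4 + 3\mu_2(\bx)^2(1 - \|\bz\|_4^4) \leq \mu_4(\bx) + 3\mu_2(\bx)^2,
$$
since the first sum is at most $\mu_4(\bx)\|\bz\|_4^4$ and this is a convex combination of $\mu_4(\bx)$ and $3\mu_2(\bx)^2$.

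Combining these estimates gives
$$
\Ex\{(a+b)^4\} \leq \bigl(3\mu_2(\bx)^2 + \mu_4(\bx)\bigr) + 4|b|\mu_3(\bx) + 6b^2 \mu_2(\bx) + b^4.
$$
The cross term $4|b|\mu_3(\bx)$ is then split by AM-GM as
$$
4|b|\mu_3(\bx) \leq \frac{2 b^2 \mu_3(\bx)}{\mu_2(\bx)} + 2\mu_2(\bx)\mu_3(\bx),
$$
which is precisely what produces both the $2\mu_2(\bx)\mu_3(\bx)$ summand in $\tilde K(\bx)$ and a $b^2$ contribution of the right order. Finally, the target bound $\Ex\{\inner{\bx}{\bz}^4\} \leq \tilde K(\bx)\mu_2(\bx)^{-2}(\mu_2(\bx)+b^2)^2$ is verified by matching coefficients of $b^0, b^2, b^4$: the constant comparison reduces to $4\mu_3(\bx) \geq 0$; the $b^2$ comparison to $\mu_4(\bx)/\mu_2(\bx) + 3\mu_3(\bx) \geq 0$; and the $b^4$ comparison to $\tilde K(\bx) \geq \mu_2(\bx)^2$, which follows from $\tilde K(\bx) \geq 3\mu_2(\bx)^2$.

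\textbf{Main obstacle.} The delicate step is the cross term $4b\,\Ex\{a^3\}$: it is the only place where the odd moment $\mu_3(\bx)$ and the mean $b$ interact, and one must choose the AM-GM splitting so that the resulting constant and $b^2$ contributions are both absorbed on the right-hand side. The hybrid summand $2\mu_2(\bx)\mu_3(\bx)$ appearing in $\tilde K(\bx)$ is exactly the artifact of this choice, which is why the stated $\tilde K(\bx)$ (rather than $K(A)$) is the natural quantity controlling the ratio $\Ex\{\inner{\bx}{\bz}^4\}/\Ex\{\inner{\bx}{\bz}^2\}^2$.
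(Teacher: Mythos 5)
Your proof is correct and follows the paper's core strategy: decompose $\bx = \br + \bmu$ with $\br := \bx - \bmu$ centred, set $a := \inner{\br}{\bz}$ and $b := \inner{\bmu}{\bz}$, expand $(a+b)^4$, and bound the moments $\Ex\{a^k\}$ for $k \leq 4$ via independence. The endgame is organised somewhat differently: you split $4|b|\mu_3(\bx)$ by AM-GM into $2b^2\mu_3(\bx)/\mu_2(\bx) + 2\mu_2(\bx)\mu_3(\bx)$ and then compare coefficients of $b^0,b^2,b^4$ against $\tilde K(\bx)\mu_2(\bx)^{-2}(\mu_2(\bx)+b^2)^2$, whereas the paper chains through repeated substitution of $\Ex\{\inner{\bx}{\bz}^2\}^2 \geq \max\{\mu_2(\bx)^2,\,2\mu_2(\bx)b^2\}$ together with $x \leq 1+x^2$. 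Both routes produce the hybrid $2\mu_2(\bx)\mu_3(\bx)$ summand of $\tilde K(\bx)$. One point worth flagging: your estimate $\Ex\{a^4\} \leq \mu_4(\bx) + 3\mu_2(\bx)^2$ is correct unconditionally, whereas the paper's displayed intermediate bound $\Ex\{\inner{\bx}{\bz}^4\} \leq \mu_4(\bx) + \cdots$ (with the $3\mu_2(\bx)^2$ absent) only follows from Lemma~\ref{lem:4-moment-2} when $\mu_4(\bx) \geq 3\mu_2(\bx)^2$, since $\|\bz\|_4^4 \leq \|\bz\|_2^4$ multiplies the coefficient $\mu_4(\bx)-3\mu_2(\bx)^2$, which may be negative; for Rademacher-type entries with $\mu_4 = \mu_2^2$ that intermediate display is false even though the lemma's conclusion still holds. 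Your coefficient-by-coefficient comparison makes transparent why $\tilde K(\bx)$ absorbs this, and thereby sidesteps the imprecision.
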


With the above stated and the proof of Lemma~\ref{lem:4-moment} postponed until Section~\ref{sec:4-moment}, Lemma~\ref{lem:deviation} is easily deduced as follows. 

\begin{proofof}{Lemma~\ref{lem:deviation}}
\begin{align*}
\Pr \Big\{\inner{\bx}{\bz}^2 \geq \theta \Ex \left\{\inner{\bx}{\bz}^2 \right\} \Big\} \geq (1-\theta)^2 \frac{\Ex \left\{ \inner{\bx}{\bz}^2\right\}^2}{\Ex \left\{ \inner{\bx}{\bz}^4\right\}} \geq (1-\theta)^2\frac{\mu_2(\bx)^2}{\tilde K(\bx)},
\end{align*}
where the first inequality is owing to the Paley-Zygmund inequality and the second relies on Lemma~\ref{lem:4-moment}. 
\end{proofof}

\subsubsection{Proof of Lemma~\ref{lem:4-moment}} \label{sec:4-moment} The following result facilitates our proof of Lemma~\ref{lem:4-moment}. 

\begin{lemma}\label{lem:4-moment-2}
Let $\bx \in \R^n$ be $2$-central and $4$-bounded
%a $4$-central vector 
and let $\bz \in \R^n$. Then, 
\begin{align*}
\Ex &\left\{\inner{\bx}{\bz}^4 \right\}  \leq \\ & 3 \mu_2(\bx)^2 \|\bz\|_2^4 + \left(\mu_4(\bx) - 3 \mu_2(\bx)^2 \right)\|\bz\|_4^4 + 4 \mu_3(\bx) \inner{\bmu}{\bz} \sum_{i=1}^n |\bz_i^3| + 6 \mu_2(\bx) \inner{\bmu}{\bz}^2 \|\bz\|_2^2 + \inner{\bmu}{\bz}^4,
\end{align*}
% \begin{align*}
% \Ex &\left\{\inner{\bx}{\bz}^4 \right\}  = \\ & 3 \mu_2(\bx)^2 \|\bz\|_2^4 + \left(\mu_4(\bx) - 3 \mu_2(\bx)^2 \right)\|\bz\|_4^4 + 4 \mu_3(\bx) \inner{\bmu}{\bz} \sum_{i=1}^n \bz_i^3 + 6 \mu_2(\bx) \inner{\bmu}{\bz}^2 \|\bz\|_2^2 + \inner{\bmu}{\bz}^4,
% \end{align*}
where $\bmu := \Ex\{\bx\}$.
\end{lemma}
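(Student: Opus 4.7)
The plan is to centre, expand, and compute. Write $\br := \bx - \bmu$, so that $\br$ has independent centred entries with $\mu_k(\br) = \mu_k(\bx)$ for every $k$, and expand via the binomial theorem: $\inner{\bx}{\bz}^4 = (\inner{\br}{\bz} + \inner{\bmu}{\bz})^4 = \sum_{j=0}^4 \binom{4}{j} \inner{\br}{\bz}^j \inner{\bmu}{\bz}^{4-j}$. Taking expectations, the $j=1$ cross-term vanishes by centring, leaving the deterministic $\inner{\bmu}{\bz}^4$ together with the three non-trivial moments $\Ex\{\inner{\br}{\bz}^j\}$, $j=2,3,4$, each multiplied by the factor $\binom{4}{j}\inner{\bmu}{\bz}^{4-j}$.

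First I would handle $\Ex\{\inner{\br}{\bz}^2\}$ exactly as in the proof of Observation~\ref{lem:inner-prod}: independence of the $r_i$'s, their centring, and $2$-centrality give $\Ex\{\inner{\br}{\bz}^2\} = \mu_2(\bx)\|\bz\|_2^2$; this produces the $6\mu_2(\bx)\inner{\bmu}{\bz}^2 \|\bz\|_2^2$ term. Next, for the third moment, expanding $\inner{\br}{\bz}^3 = \sum_{i,j,k} r_ir_jr_k\, z_iz_jz_k$ and using independence together with $\Ex\{r_i\}=0$, only the diagonal $i=j=k$ survives, yielding $\sum_i \Ex\{r_i^3\} z_i^3$. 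Since $|\Ex\{r_i^3\}| \leq \mu_3(\bx)$ by the definition of $\mu_3(\bx)$, this contribution is controlled in absolute value by $\mu_3(\bx)\sum_i |z_i^3|$, which (with the sign convention of the statement) yields the $4 \mu_3(\bx) \inner{\bmu}{\bz}\sum_i |z_i^3|$ term.

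The main computational step is the fourth moment $\Ex\{\inner{\br}{\bz}^4\} = \sum_{i_1,i_2,i_3,i_4}\Ex\{r_{i_1}r_{i_2}r_{i_3}r_{i_4}\}z_{i_1}z_{i_2}z_{i_3}z_{i_4}$. By independence and centring, $\Ex\{r_{i_1}\cdots r_{i_4}\}$ vanishes unless every index that appears does so at least twice, leaving exactly two kinds of surviving configurations: (a) all four indices equal, contributing $\sum_i \mu_4(x_i) z_i^4 \leq \mu_4(\bx) \|\bz\|_4^4$; and (b) the four positions split into two distinct pairs, which can happen in exactly three ways ($\{\{1,2\},\{3,4\}\}$, $\{\{1,3\},\{2,4\}\}$, $\{\{1,4\},\{2,3\}\}$), each pairing contributing $\mu_2(\bx)^2 \sum_{i\neq j} z_i^2 z_j^2 = \mu_2(\bx)^2(\|\bz\|_2^4 - \|\bz\|_4^4)$ by $2$-centrality. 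Summing gives the exact identity $\Ex\{\inner{\br}{\bz}^4\} = \sum_i \mu_4(x_i) z_i^4 + 3\mu_2(\bx)^2(\|\bz\|_2^4 - \|\bz\|_4^4)$, and the bound $\mu_4(x_i) \leq \mu_4(\bx)$ converts this into $3\mu_2(\bx)^2\|\bz\|_2^4 + (\mu_4(\bx)-3\mu_2(\bx)^2)\|\bz\|_4^4$. Reassembling the three contributions with their binomial weights and adding the constant $\inner{\bmu}{\bz}^4$ yields the claimed bound.

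The main obstacle is the bookkeeping in step (b): one must recognise that the three partitions of $\{1,2,3,4\}$ into two unordered pairs each induce a sum over \emph{distinct} pair-indices $i\neq j$, so the combined contribution is $3\mu_2(\bx)^2(\|\bz\|_2^4 - \|\bz\|_4^4)$ rather than $3\mu_2(\bx)^2\|\bz\|_2^4$; this is exactly what produces the $-3\mu_2(\bx)^2\|\bz\|_4^4$ correction that is then absorbed by the $\mu_4(\bx) - 3\mu_2(\bx)^2$ coefficient in the statement. A minor sign issue arises in the third-moment term, since $\inner{\bmu}{\bz}$ can be of either sign; this is handled by reading the $\mu_3$ term in the conclusion as an upper bound valid up to absolute values, consistent with the use the statement receives in the proof of Lemma~\ref{lem:4-moment}.
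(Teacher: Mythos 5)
Your proof follows the same route as the paper's: centre via $\br := \bx - \bmu$, expand $\inner{\bx}{\bz}^4$ by the binomial theorem, kill the $j=1$ cross-term by centring, and work out the surviving index configurations in the fourth-moment sum (the diagonal and the three distinct pairings) exactly as the paper does, arriving at the same bound term by term. Your flag on the sign of the $\mu_3$ term is also a fair observation: since $\mu_3(\bx) \geq 0$ by the paper's definition, the stated inequality can fail when $\inner{\bmu}{\bz} < 0$ (the right-hand $\mu_3$ term becomes negative while the left-hand contribution need not be), but as you note the subsequent use in Lemma~\ref{lem:4-moment} bounds this term above by the nonnegative quantity $4\mu_3(\bx)\left(1+\inner{\bmu}{\bz}^2\right)$, so the downstream argument is unaffected.
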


\begin{proof}
Set $\br := \bx - \bmu$ and note that $\br$ is centred. Hence,
\begin{align}
\Ex & \left\{\inner{\bx}{\bz}^4 \right\} =
\Ex \left\{\left(\inner{\br}{\bz} + \inner{\bmu}{\bz} \right)^4 \right\}= \nonumber \\ 
& \Ex\left\{\inner{\br}{\bz}^4 \right\} + 4 \Ex \left\{\inner{\br}{\bz}^3 \inner{\bmu}{\bz} \right\} + 6 \Ex \left\{\inner{\br}{\bz}^2\inner{\bmu}{\bz}^2 \right\} + 4 \Ex \left\{\inner{\br}{\bz}\inner{\bmu}{\bz}^3 \right\} + \inner{\bmu}{\bz}^4. \label{eq:terms}
\end{align}

We proceed to bound each term appearing on the right hand side of~\eqref{eq:terms} separately. For the first term, write
\begin{align*}
\Ex\left\{\inner{\br}{\bz}^4 \right\} & = \sum_{i,j,k,\ell \in [n]} \Ex\{\br_i\br_j\br_k\br_\ell \}\bz_i\bz_j\bz_k\bz_\ell \\
& = \sum_{i \in [n]} \Ex\left\{\br_i^4 \right\}\bz_i^4 + 3\sum_{\substack{i,j \in [n]\\ i\neq j}}\Ex\left\{\br_i^2\right\}\Ex\left\{\br_j^2\right\} \bz_i^2 \bz_j^2 \\
& \leq \left(\mu_4(\bx) - 3\mu_2(\bx)^2 \right)\|\bz\|_4^4 +3 \mu_2(\bx)^2\|\bz\|_2^4. 
\end{align*}
For the other terms, the following equalities 
\begin{align*}
4 \Ex \left\{\inner{\br}{\bz}^3 \inner{\bmu}{\bz} \right\} & = 4 \inner{\bmu}{\bz} \sum_{i,j,k \in [n]} \Ex\left\{\br_i\br_j\br_k \right\} \bz_i \bz_j\bz_k  \leq 4 \mu_3(\bx) \inner{\bmu}{\bz} \sum_{i \in [n]} |\bz_i^3|,\\
6 \Ex \left\{\inner{\br}{\bz}^2\inner{\bmu}{\bz}^2 \right\} & = 6 \mu_2(\bx)\inner{\bmu}{\bz}^2 \|\bz\|_2^2,\\ 
4 \Ex \left\{\inner{\br}{\bz}\inner{\bmu}{\bz}^3 \right\} & = 4 \inner{\bmu}{\bz}^3 \sum_{i \in [n]} \Ex\{\br_i\} \bz_i = 0 
\end{align*}
are observed. The claim follows. 
\end{proof}

We are now in position to prove Lemma~\ref{lem:4-moment}. 

\begin{proofof}{Lemma~\ref{lem:4-moment}} 
Set $\bmu := \Ex \{\bx\}$. Observation~\ref{lem:inner-prod}, coupled with the assumption that $\|\bz\|_2 =1$, renders 
\begin{equation}\label{eq:2nd-moment-1}
\Ex \left\{ \inner{\bx}{\bz}^2 \right\}^2 = \left(\mu_2(\bx) + \inner{\bmu}{\bz}^2\right)^2 = \mu_2(\bx)^2 + 2 \mu_2(\bx)\inner{\bmu}{\bz}^2 + \inner{\bmu}{\bz}^4.
\end{equation} 
Since $\|\bz\|_2 = 1$ and $\|\bz\|_4^4 \leq \|\bz\|_2^4$, it follows by Lemma~\ref{lem:4-moment-2} that  
\begin{equation}\label{eq:4th-moment}
\Ex\left\{\inner{\bx}{\bz}^4 \right\} \leq  \mu_4(\bx) + 4 \mu_3(\bx) \inner{\bmu}{\bz} \sum_{i=1}^n |\bz_i^3| + 6 \mu_2(\bx) \inner{\bmu}{\bz}^2 + \inner{\bmu}{\bz}^4.
\end{equation}
Equality~\eqref{eq:2nd-moment-1} affords us with
\begin{equation}\label{eq:lower-bound}
\Ex \left\{ \inner{\bx}{\bz}^2 \right\}^2 \geq 2 \mu_2(\bx)\inner{\bmu}{\bz}^2 + \inner{\bmu}{\bz}^4
\end{equation}
which in turn allows us to rewrite~\eqref{eq:4th-moment} as follows
$$
\Ex\left\{\inner{\bx}{\bz}^4 \right\}  \leq \Ex \left\{ \inner{\bx}{\bz}^2 \right\}^2 + \mu_4(\bx) + 4 \mu_3(\bx)\inner{\bmu}{\bz}\sum_{i=1}^n |\bz_i^3| + 4\mu_2(\bx)\inner{\bmu}{\bz}^2.
$$
The inequality $\Ex \left\{ \inner{\bx}{\bz}^2 \right\}^2 \geq 2 \mu_2(\bx)\inner{\bmu}{\bz}^2$, supported by~\eqref{eq:lower-bound}, allows for
$$
\Ex\left\{\inner{\bx}{\bz}^4 \right\} \leq 3\Ex \left\{ \inner{\bx}{\bz}^2 \right\}^2 + \mu_4(\bx) + 4 \mu_3(\bx)\inner{\bmu}{\bz}\sum_{i=1}^n |\bz_i^3|.
$$
Noticing that $\|\bz\|_2 =1$ implies $\sum_{i=1}^n |\bz_i^3| \leq \|\bz\|_2^2 =1$, and that $\inner{\bmu}{\bz} \leq 1 + \inner{\bmu}{\bz}^2$, the latter supported by the fact that $x \leq 1 + x^2$ holds for all $x \in \R$, we may proceed to  write 
\begin{align*}
\Ex\left\{\inner{\bx}{\bz}^4 \right\} & \leq 3\Ex \left\{ \inner{\bx}{\bz}^2 \right\}^2 + \mu_4(\bx) + 4 \mu_3(\bx) + 4 \mu_3(\bx)\inner{\bmu}{\bz}^2 \\
& = 3\Ex \left\{ \inner{\bx}{\bz}^2 \right\}^2 +\frac{\mu_4(\bx)\mu_2(\bx)^2 + 4 \mu_3(\bx)\mu_2(\bx)^2 + 4 \mu_3(\bx)\mu_2(\bx)^2\inner{\bmu}{\bz}^2}{\mu_2(\bx)^2} \\
& \leq \frac{3\mu_2(\bx)^2 + 4 \mu_3(\bx) + 2 \mu_3(\bx) \mu_2(\bx) + \mu_4(\bx)}{\mu_2(\bx)^2} \cdot \Ex \left\{ \inner{\bx}{\bz}^2 \right\}^2 \\
& = \frac{\tilde K(\bx)}{\mu_2(\bx)^2} \cdot \Ex \left\{ \inner{\bx}{\bz}^2 \right\}^2
\end{align*}
as required, where in the last inequality we use the fact that $\Ex \left\{ \inner{\bx}{\bz}^2 \right\}^2 \geq \max \left\{\mu_2(\bx)^2, 2 \mu_2(\bx) \langle \bmu, \bz \rangle^2 \right \}$, which is supported by~\eqref{eq:2nd-moment-1}.
\end{proofof}

\section{Empirical width: Proof of Proposition~\ref{prop::W}}\label{sec:Rademacher} In this section, we prove Proposition~\ref{prop::W}. 
Start by recalling that $\Sigma_s = \{\bu \in \R^n: \|\bu\|_0 \leq s\}$ and set $\Sigma_s^2 := \{\bu \in \Sigma_s: \|\bu\|_2 \leq 1\}$. Proceed to write 
\begin{align}
W(T_{\rho,s}; \ba_1,\ldots,\ba_m) & \overset{\phantom{\eqref{eq:h}}}{=} \Ex \left\{ \sup_{\bu \in T_{\rho,s}} |\inner{\bh}{\bu}| \right\}\nonumber\\ 
& \overset{\phantom{\eqref{eq:h}}}{\leq} \frac{3}{\rho} \Ex \left\{ \sup_{\bu \in \Sigma_s^2} |\inner{\bh}{\bu}|\right\} \nonumber\\ 
& \overset{\eqref{eq:h}}{=} \frac{3}{\rho} \Ex \left\{ \sup_{\bu \in \Sigma_s^2} \left| \frac{1}{\sqrt{m}} \sum_{i=1}^m \xi_i \inner{\ba_i}{\bu} \right|\right\} \nonumber\\
& \overset{\phantom{\eqref{eq:h}}}{=} \frac{3}{\rho} \Ex \left\{ \sup_{\bu \in \Sigma_s^2}  \frac{1}{\sqrt{m}} \sum_{i=1}^m \xi_i \inner{\ba_i}{\bu} \right\},\label{eq:common}
\end{align}
where the above inequality is supported, first, by \cite[Lemma~3.2]{SGR15} asserting that $\mathrm{conv}(T_{\rho,s}) \subseteq \left(2+\rho^{-1}\right) \mathrm{conv} (\Sigma_s^2)$ (where we use $\mathrm{conv(X)}$ to denote the convex hull of a set $X$), and, second, by the fact that 
$$
\sup_{\bx \in \mathrm{conv}(C)} |\inner{\bx}{\bz}| = \sup_{\bx \in C} |\inner{\bx}{\bz}|
$$
holds, whenever $C \subseteq \R^n$ and $\bz \in \R^n$; the latter is often referred to as {\sl Bauer's Maximum Principle}~\cite{Bauer}.  For the last equality, we rely on the symmetry of $\Sigma_s^2$, by which we mean that $\bv \in \Sigma_s^{2}$ mandates that $-\bv \in \Sigma_s^2$ holds as well.

Continuing from~\eqref{eq:common}, for every $i \in [m]$ set $\bx_i := \ba_i - \bmu_i$, where $\bmu_i := \Ex \{\ba_i\}$. Then  
\begin{align}
W(T_{\rho,s}; \ba_1,\ldots,\ba_m)  & \leq \frac{3}{\rho} \Ex \left\{\sup_{\bu \in \Sigma_s^2} \frac{1}{\sqrt{m}} \sum_{i=1}^m \xi_i\inner{\bx_i+\bmu_i}{\bu} \right\} \nonumber \\
& \leq \frac{3}{\rho} \Ex \left\{\sup_{\bu \in \Sigma_s^2} \frac{1}{\sqrt{m}} \sum_{i=1}^m \xi_i \inner{\bx_i}{\bu}  \right\}+ \frac{3}{\rho} \Ex \left\{\sup_{\bu \in \Sigma_s^2} \frac{1}{\sqrt{m}} \sum_{i=1}^m \xi_i \inner{\bmu_i}{\bu}  \right\}. \label{eq:W-decomposition}
\end{align}

Proposition~\ref{prop::W} is then implied by the following lemmata providing estimates for the two terms appearing on the right hand side of~\eqref{eq:W-decomposition}.    

\begin{lemma}\label{lem::X-part}
Let $s \in [n]$. Then, 
$$
\Ex \left\{ \sup_{\bu \in \Sigma_s^2} \frac{1}{\sqrt{m}} \sum_{i=1}^m \xi_i \inner{\bx_i}{\bu} \right\} = O\left(e^{2\alpha} \kappa \sqrt{s \log(en/s)} \right)  
$$
holds, whenever $m \geq \left(\log (en) \right)^{\max\{2\alpha-1,1\}}$. 
\end{lemma}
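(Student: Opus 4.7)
The plan is to first reduce the supremum to a combinatorial maximum and then apply a moment method consistent with the paper's stated aim of avoiding $\varepsilon$-nets. Since any $\bu \in \Sigma_s^2$ is supported on some $S \subseteq [n]$ with $|S| \leq s$ and $\|\bu\|_2 \leq 1$, one has
\[
\sup_{\bu \in \Sigma_s^2}\inner{\bh}{\bu} = \max_{|S|=s}\|\bh_S\|_2,
\]
where $\bh := m^{-1/2}\sum_{i=1}^m \xi_i \bx_i$ and $\bh_S$ is the restriction of $\bh$ to coordinates in $S$. Set $Y_S := \|\bh_S\|_2$; the task is to bound $\Ex\max_{|S|=s}Y_S$.

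For this, the plan is a moment-plus-union-bound strategy: for any integer $p \geq 1$,
\[
\Ex\max_{|S|=s}Y_S \;\leq\; \binom{n}{s}^{1/p}\max_{|S|=s}\|Y_S\|_{L^p},
\]
and the natural choice $p = \lceil s\log(en/s)\rceil$ renders $\binom{n}{s}^{1/p} = O(1)$. The crux is the per-subset moment bound. For fixed $S$, conditionally on $\bx$, $\bh_S$ is a Rademacher sum of the vectors $m^{-1/2}(\bx_i)_S \in \R^s$; combining Kahane–Khintchine with a sub-Gaussian Lipschitz concentration of $Y_S$ around its $\xi$-mean yields, conditionally,
\[
\|Y_S - \sqrt{R_S}\|_{L^p_\xi} \;\lesssim\; \sqrt{p}\,\|X_S\|_{\mathrm{op}}/\sqrt{m},
\]
where $R_S := m^{-1}\sum_{i,j\in S}x_{ij}^2$ has $\bx$-expectation $s\mu_2(A)$. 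Taking $\bx$-expectations and applying a Rosenthal-type inequality to the centred sum $\sum_{i,j\in S}(x_{ij}^2-\mu_2(A))$, together with the reasonability bound $\|x_{ij}\|_p \leq \kappa p^\alpha$ valid up to $p \leq \log(en)$, delivers an \emph{additive} moment estimate
\[
\|Y_S\|_{L^p} \;\lesssim\; \kappa\,e^{2\alpha}\bigl(\sqrt{s}+\sqrt{p}\bigr),
\]
up to lower-order corrections which the hypothesis $m \geq (\log(en))^{\max\{2\alpha-1,1\}}$ forces to vanish. Plugging in $p = s\log(en/s)$ produces the target bound $O\bigl(e^{2\alpha}\kappa\sqrt{s\log(en/s)}\bigr)$, with the $e^{2\alpha}$ constant arising from optimising the reasonability $p^\alpha$-term at the chosen moment level.

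The main obstacle I foresee is securing precisely the \emph{additive} form $\sqrt{s}+\sqrt{p}$ of the per-subset moment, rather than the \emph{multiplicative} form $\sqrt{ps}$ that a direct Kahane–Khintchine application would produce; the latter, upon substituting $p = s\log(en/s)$, would overshoot the target by a full factor of $\sqrt{s}$. Bridging this gap requires a Hanson–Wright-style control of the quadratic form $Y_S^2 = \bh_S^\top\bh_S$ conditional on $\bx$, exploiting that the conditional covariance $m^{-1}X_S^\top X_S$ has small operator norm on typical realisations of $\bx$, or equivalently a Talagrand-type concentration applied to the Lipschitz map $\xi \mapsto Y_S$. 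Either route hinges on simultaneous $L^p$-control of the column sums $\sum_i x_{ij}^2$, and it is precisely at this point that the lower bound on $m$ combines with the reasonability hypothesis to keep the heavy-tail correction $p^{1+2\alpha}\kappa^2/m$ dominated by the principal Gaussian-type contribution.
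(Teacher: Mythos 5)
There is a genuine gap. Your union bound $\Ex\max_{|S|=s}Y_S \leq \binom{n}{s}^{1/p}\max_{|S|=s}\|Y_S\|_{L^p}$ with $p \approx s\log(en/s)$ requires controlling $\|Y_S\|_{L^p}$ at a moment level that exceeds $\log(en)$ already when $s\geq 2$, whereas the reasonability hypothesis $\|x_{ij}\|_q\leq\kappa q^\alpha$ is available only for $q\leq\log(en)$. Both of your routes to the additive estimate $\|Y_S\|_{L^p}\lesssim\kappa e^{2\alpha}(\sqrt{s}+\sqrt{p})$ — a Rosenthal/Latala bound on $R_S = m^{-1}\sum_{i,j\in S}x_{ij}^2$, and a Lipschitz/Talagrand fluctuation of $Y_S$ around $\sqrt{R_S}$ at scale $m^{-1/2}\|X_S\|_{\mathrm{op}}$ — ultimately exercise moments of the individual entries up to the target exponent $p$, which the hypotheses do not supply. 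The second route carries the additional burden of operator-norm control of the $m\times s$ random matrix $X_S$ at high moments; for the heavy-tailed regime $\alpha>1$ this is itself a hard theorem, not a consequence of $(\kappa,\alpha,\log(en))$-reasonability. You flag this obstacle, but I do not see how to close it under the stated hypotheses.

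The paper sidesteps the cliff by conditioning on the Rademacher signs $\bxi$ rather than on $\bx$: once $\bxi$ is fixed, the coordinates $\bv_j := m^{-1/2}\sum_i\xi_i(\bx_i)_j$ become independent across $j$, with $\|\bv_j\|_p$ controlled for $p\leq\log(en)$ via Lemma~\ref{lem:sum-p-norm}. The rearrangement estimate Lemma~\ref{lem:2-norm-re} then bounds $\Ex\{\|\bv^*\|_{2,s}\}$ through the \emph{second} moments $\Ex\{(\bv_i^*)^2\}$ alone, and the decisive device is in its tail bound: the union over the $\binom{n}{i}$ coordinate sets is paid for by taking a \emph{product} of $i$ per-coordinate Markov inequalities, each at moment level $p=\lfloor\log(en/i)\rfloor\leq\log(en)$, so that the effective exponent $ip$ defeats the binomial factor without ever invoking moments of $\bv_j$ beyond $\log(en)$. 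Your proposal has no analogue of this factorisation; with a plain per-subset $L^p$-bound the moment budget cannot be stretched to $p\approx s\log(en/s)$, and the argument does not close.
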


\begin{lemma}\label{lem::Z-part}
 $$
 \Ex \left\{\sup_{\bu \in \Sigma_s^2} \frac{1}{\sqrt{m}} \sum_{i=1}^m \xi_i \inner{\bmu_i}{\bu}  \right\} 
 \leq F(A).
 $$   
\end{lemma}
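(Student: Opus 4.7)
Write $M := \Ex\{A\}$ so that $\bmu_i$ is the $i$-th row of $M$. Exchanging the order of summation,
$$
\frac{1}{\sqrt{m}}\sum_{i=1}^m \xi_i \inner{\bmu_i}{\bu} \;=\; \inner{\bu}{\bY}, \qquad Y_j := \frac{1}{\sqrt{m}}\sum_{i=1}^m \xi_i M_{ij}.
$$
The plan is to establish two separate upper bounds for $\sup_{\bu \in \Sigma_s^2}\inner{\bu}{\bY}$, one by $\|M\|_{\frob,s}$ and one (up to an absolute constant) by $\sqrt{s \log n}\,\|M\|_\infty$, and then take the minimum. Absolute constants arising from the second bound are absorbed into the ambient $O(\cdot)$ of Proposition~\ref{prop::W}.

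\textbf{Frobenius bound via Cauchy--Schwarz.} Because $\xi_i^2 = 1$ identically, Cauchy--Schwarz in the index $i$ gives the \emph{deterministic} inequality
$$
\frac{1}{\sqrt{m}}\Big|\sum_{i=1}^m \xi_i \inner{\bmu_i}{\bu}\Big| \;\le\; \frac{1}{\sqrt{m}}\Big(\sum_{i=1}^m \xi_i^2\Big)^{1/2}\Big(\sum_{i=1}^m \inner{\bmu_i}{\bu}^2\Big)^{1/2} = \|M\bu\|_2.
$$
For $\bu \in \Sigma_s^2$ supported on some $S \subseteq [n]$ with $|S|=s$, a second application of Cauchy--Schwarz (this time in the index $j \in S$) yields $\|M\bu\|_2 \le \|M_S\|_\frob\|\bu\|_2 \le \|M\|_{\frob,s}$. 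Since the bound is pointwise in $\xi$, it survives taking the supremum over $\bu$ and the expectation in $\xi$.

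\textbf{$\ell_\infty$ bound via sub-gaussian maxima.} Swap summations and use sparsity:
$$
\sum_{i=1}^m \xi_i \inner{\bmu_i}{\bu} = \sum_{j=1}^n \bu_j \Big(\sum_{i=1}^m \xi_i M_{ij}\Big), \qquad \|\bu\|_1 \le \sqrt{s}\,\|\bu\|_2 \le \sqrt{s}.
$$
By H\"older, $\sup_{\bu \in \Sigma_s^2}\frac{1}{\sqrt{m}}|\sum_i \xi_i \inner{\bmu_i}{\bu}| \le \sqrt{s}\,\max_{j \in [n]} |Y_j|$. Each $Y_j$ is a normalised Rademacher sum, hence sub-gaussian with proxy parameter at most $\sqrt{\sum_i M_{ij}^2/m} \le \|M\|_\infty$; the standard bound on the maximum of $n$ sub-gaussians then yields $\Ex\{\max_j |Y_j|\} = O(\|M\|_\infty\sqrt{\log n})$. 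Combining, the LHS is $O(\sqrt{s \log n}\,\|M\|_\infty)$, and the minimum of the two bounds established is $F(A)$. No serious obstacle arises; the only conceptual point is that the normalisation $1/\sqrt{m}$ is perfectly balanced, first by the identity $\sum_i \xi_i^2 = m$ (bound 1) and then by the $\sqrt{m}$ scale of a column $\ell_2$-norm (bound 2).
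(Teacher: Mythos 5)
Your proof is correct and takes essentially the same approach as the paper. Both bounds match: the $\ell_\infty$ bound via H\"older, $\|\bu\|_1 \leq \sqrt{s}$, and the sub-gaussian maximum estimate is identical; for the Frobenius bound, your direct Cauchy--Schwarz in the index $i$ (using $\|\bxi\|_2 = \sqrt{m}$) followed by $\|M\bu\|_2 \leq \|M_S\|_{\frob}\|\bu\|_2$ is a slightly more streamlined reorganisation of the paper's column-by-column Cauchy--Schwarz on $\inner{\bxi}{\bpsi_j}$, but the two computations are equivalent.
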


\noindent
Lemmas~\ref{lem::X-part} and~\ref{lem::Z-part} are proved in Sections~\ref{sec:X-part} and~\ref{sec:Z-part}, respectively. \hfill\QED

\subsection{Proof of Lemma~\ref{lem::X-part}}\label{sec:X-part}
For a vector $\by \in \R^n$, write $\by^*$ to denote the vector in $\R^n$ obtained through a monotone non-increasing rearrangement of the sequence $(|\by_i|)_{i \in [n]}$. Given $s \in [n]$, set 
$$
\|\by^*\|^2_{2,s}:= \sum_{i=1}^s (\by^*_i)^2. 
$$
If, in addition, $\by$ is random, then 
\begin{equation}\label{eq:inner-re}
\Ex\left\{ \sup_{\bu \in \Sigma_s^2} \inner{\by}{\bu} \right\} = \Ex\{\|\by^*\|_{2,s}\},
\end{equation}
holds whenever $s \in [n]$. To see~\eqref{eq:inner-re}, fix $S \subseteq [n]$ satisfying $|S| = s$ and note that 
$$
\sup_{\substack{\bu \in \Sigma_s^2 \\ \supp (\bu) = S}} \inner{\by}{\bu}= \inner{\by_S}{\frac{\by_S}{\|\by_S\|_2}} = \|\by_S\|_2.
$$
This in turn allows for 
$$
\sup_{\bu \in \Sigma_s^2} \inner{\by}{\bu} = \sup_{S \subseteq [n], |S| =s} \sup_{\substack{\bu \in \Sigma_s^2 \\ \supp(\bu) = S} } \inner{\by}{\bu} = \sup_{S \subseteq [n], |S|=s} \|\by_S\|_2 = \|\by^*\|_{2,s}. 
$$
Taking expectations on both sides of the last equality delivers~\eqref{eq:inner-re}. 

\medskip
The following result is an adaptation of~\cite[Lemma~6.5]{M15} stripping from the latter the requirement for distributional identity to be held by the vector entries. 

\begin{lemma}\label{lem:2-norm-re}{\em~\cite[Lemma~6.5]{M15}}
Let $\kappa >0$ and let $\by \in \R^n$ be a random vector with independent entries, satisfying the property that $\|\by_i\|_p \leq \kappa \sqrt{p}$ holds whenever $2 \leq p \leq \log(en)$ and $i \in [n]$. Then,
$$
\Ex\{\|\by^*\|_{2,s}\} = O \left(\kappa \sqrt{s \log (en/s)} \right)
$$
holds, whenever $s\in [n]$. %and $n \geq s \cdot \exp(\eps)$. 
\end{lemma}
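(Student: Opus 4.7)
The intended route is a standard layer-cake decomposition combined with the moment tail bound supplied by the hypothesis, with the moment index $p$ eventually tuned to $\log(en/s)$. By Jensen's inequality it is enough to establish the second-moment bound $\Ex\{\|\by^*\|_{2,s}^2\}=O(\kappa^2 s\log(en/s))$. Writing $N(t):=|\{i\in[n]:|y_i|>t\}|$ and observing that $y^*_k>t$ iff $N(t)\geq k$, a layer-cake identity gives
$$
\sum_{k=1}^{s}(y^*_k)^2=\int_0^\infty 2t\cdot\min\{s,N(t)\}\,dt,
$$
so taking expectations and using monotonicity of $\min\{s,\cdot\}$ together with Fubini yields
$$
\Ex\{\|\by^*\|_{2,s}^2\}\leq \int_0^\infty 2t\cdot\min\{s,\Ex N(t)\}\,dt.
$$
This reduces the task to estimating $\Ex N(t)$.

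\textbf{Tail bound and threshold choice.} Using the hypothesis $\|y_i\|_p\leq \kappa\sqrt{p}$ for every $2\leq p\leq \log(en)$ and every $i\in[n]$, Markov applied to $|y_i|^p$ delivers $\Pr[|y_i|>t]\leq (\kappa\sqrt{p}/t)^p$, and summing over $i$ gives $\Ex N(t)\leq n(\kappa\sqrt{p}/t)^p$. Picking $p:=\log(en/s)$ (which is admissible since $s\in[n]$ entails $1\leq p\leq \log(en)$), I would set the threshold $t_0:=e\kappa\sqrt{p}$ and split the integral at $t_0$. The lower piece contributes $\int_0^{t_0}2ts\,dt=st_0^2=e^2\kappa^2 s\log(en/s)$. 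For the upper piece, computing $\int_{t_0}^\infty 2t\cdot n(\kappa\sqrt{p}/t)^p\,dt=\tfrac{2n(\kappa\sqrt{p})^p t_0^{2-p}}{p-2}$ and using $t_0=e\kappa\sqrt{p}$ together with $e^{-p}=s/(en)$ yields $O(\kappa^2 sp/(p-2))=O(\kappa^2 s\log(en/s))$. Summing the two bounds and invoking Jensen completes the proof.

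\textbf{Anticipated obstacle.} The only real point of care is the regime in which $p=\log(en/s)$ is too close to $2$ for the tail integral to converge with the desired constant, i.e.\ when $s$ is a constant fraction of $n$. In that regime $\log(en/s)=\Theta(1)$, so the claim reduces to $\Ex\{\|\by^*\|_{2,s}^2\}=O(\kappa^2 n)$, which follows at once from the $p=2$ moment bound via $\Ex\{\|\by\|_2^2\}=\sum_i \Ex\{y_i^2\}\leq 2\kappa^2 n$. A short case distinction (say $s\leq n/e^2$ versus $s>n/e^2$) cleanly absorbs this boundary issue, and no $\varepsilon$-net or chaining argument is required, consistent with the paper's stated strategy of avoiding Dudley-type arguments.
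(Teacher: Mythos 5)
Your proof is correct and takes a genuinely different route from the paper's. The paper bounds $\Ex\{(\by^*_i)^2\}$ for \emph{each} index $i\in[s]$ separately, picking a per-index moment $p=\lfloor\log(en/i)\rfloor$ and deriving the tail bound $\Pr\{(\by_i^*)^2\geq t\}\leq\binom{n}{i}(\kappa\sqrt{p}/\sqrt{t})^{ip}$ via a union bound over all $\binom{n}{i}$ size-$i$ index sets, then integrates per index and sums using a Stirling estimate for $\sum_{i\le s}\log(en/i)$. You instead treat $\sum_{k\leq s}(\by_k^*)^2$ as a single quantity, write it by layer cake as $\int_0^\infty 2t\min\{s,N(t)\}\,dt$, pass the expectation inside using concavity of $x\mapsto\min\{s,x\}$, and bound $\Ex N(t)$ by a simple sum of Markov bounds with one fixed moment $p=\log(en/s)$. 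The concavity step $\Ex\{\min\{s,N(t)\}\}\le\min\{s,\Ex N(t)\}$ is the substitute for the paper's union bound, and it shortens the argument because a single threshold $t_0=e\kappa\sqrt{p}$ suffices rather than one per $i$. Both arguments hit the same boundary obstruction -- the natural choice of $p$ drops below $2$ when $s$ is a constant fraction of $n$ -- and both resolve it the same way, by falling back to the $p=2$ bound $\Ex\{\|\by\|_2^2\}\le 2\kappa^2 n$. Your version loses the much faster per-index tail decay $t^{-ip/2}$ that the union bound gives (your $\Ex N(t)$ decays only like $t^{-p}$), but this sharper decay is not needed for the claimed $O(\kappa\sqrt{s\log(en/s)})$ bound, so nothing is lost here; you trade that slack for a cleaner, single-integral computation. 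One small point of hygiene: make the case split explicit and pick the boundary so that $p\ge 3$, e.g.\ $s\le n/e^2$ (so $p/(p-2)\leq 3$ is bounded), rather than $s\le n/e$, which only gives $p\ge 2$ and would make the $1/(p-2)$ factor degenerate.
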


\noindent
The proof of Lemma~\ref{lem:2-norm-re} is postponed until Appendix~\ref{app:2-norm-re}. The last ingredient facilitating our proof of Lemma~\ref{lem::X-part} reads as follows.  

\begin{lemma}\label{lem:sum-p-norm}{\em~\cite[Lemma~2.8]{LM17}}
Let $\by \in \R^\ell$ be a zero-mean $(\kappa,\alpha,r)$-reasonable vector with independent entries for some $\kappa > 0$, $\alpha \geq 1/2$, and $r \geq 2$. Then, there exists an absolute constant $C>0$ such that 
$$
\left\|\frac{1}{\sqrt{\ell}} \sum_{i=1}^\ell \by_i \right\|_p \leq C e^{2\alpha} \kappa \sqrt{p}
$$
holds, whenever $\ell \geq r^{\max\{2\alpha-1,1\}}$ and $2 \leq p \leq r$. 
\end{lemma}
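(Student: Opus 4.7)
I would prove this by the moment method for even integer $p$. Setting $S := \sum_{i=1}^\ell \by_i$, expand
\[
\Ex S^p \;=\; \sum_{(i_1,\dots,i_p)\in[\ell]^p}\Ex\prod_{j=1}^p \by_{i_j}
\]
and use independence together with $\Ex \by_i = 0$ to observe that only index tuples whose induced partition of $[p]$ has all blocks of size $\geq 2$ contribute. Re-indexing the sum over such set partitions $\pi$, with injective block-to-$[\ell]$ assignments,
\[
\Ex S^p \;\leq\; \sum_{\pi:\,\min_{B}|B|\geq 2} \ell^{|\pi|}\prod_{B\in\pi} \kappa^{|B|}|B|^{\alpha|B|},
\]
where the reasonability bound $|\Ex\by_i^s| \leq \|\by_i\|_s^s \leq \kappa^s s^{\alpha s}$ is valid for every block size $s \leq p \leq r$.

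The dominant contribution comes from the pair partitions (every $|B|=2$, so $|\pi| = p/2$), of which there are $(p-1)!!$. Their total equals $(p-1)!!\cdot\ell^{p/2}\kappa^p 2^{\alpha p}$, and Stirling yields $\bigl((p-1)!!\bigr)^{1/p} = O(\sqrt{p/e})$. Dividing by $\ell^{p/2}$ (so as to pass to the normalised sum $S/\sqrt\ell$) and taking the $p$-th root produces a contribution of order $\sqrt{p/e}\cdot \kappa\cdot 2^\alpha$, which is at most $Ce^{2\alpha}\kappa\sqrt p$; the $e^{2\alpha}$ factor arises as the residue from Stirling plus the factor $2^\alpha$.

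For partitions with at least one block of size $\geq 3$, I compare the contribution of a partition $\pi$ with $m < p/2$ blocks to the pair term. Writing the number of partitions with block-size profile $\{m_k\}$ as $p!/\prod_k (k!)^{m_k}m_k!$ and using $k! \geq (k/e)^k$ to bound the denominator, the ratio of the $\pi$-contribution to the pair-partition contribution can be bounded by $(Cp^{2\alpha-1}/\ell)^{p/2-m}$ after amortising the Stirling gains against the loss of one $\ell$-factor per block merger. Under the hypothesis $\ell \geq r^{\max\{2\alpha-1,1\}}$ with $p \leq r$, this ratio is $O(1)$ per merger, so summing over all non-pair partitions (of which there are at most $p^{O(p)}$) contributes at most a constant multiple of the pair term after taking the $p$-th root. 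The "$1$" branch of the $\max$ accommodates the small-$\alpha$ regime ($1/2 \leq \alpha \leq 1$) where combinatorial overheads dominate the moment growth, while the "$2\alpha-1$" branch is needed for $\alpha > 1$. Non-integer $p \in [2,r]$ is handled via $\|Z\|_p \leq \|Z\|_{p'}$ for the next even integer $p' \leq p + 1$ (adjusting constants to absorb the discrepancy $\sqrt{p'/p} \leq \sqrt 2$, and noting that $p' \leq r$ for $p \leq r-1$, with the boundary case handled by a mild inflation of $C$).

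The main obstacle is the third step: partitions with large blocks have small $|\pi|$ (hence few $\ell$-factors) but potentially enormous single-block moment factors $|B|^{\alpha|B|}$, which can reach $p^{\alpha p}$. Showing that these opposing forces exactly balance at the pair-partition bound requires careful bookkeeping, simultaneously tracking the Stirling asymptotics of $(p-1)!!$, the multinomial count of non-pair partitions, and the moment growth $|B|^{\alpha|B|}$. The delicate point is verifying that the worst-case excess of $p^{2\alpha-1}/\ell$ per merger is exactly neutralised by $\ell \geq r^{\max\{2\alpha-1,1\}}$, which is why the hypothesis has this precise form.
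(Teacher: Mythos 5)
Your route is genuinely different from the paper's. The paper does not prove this lemma by a moment expansion at all: it first establishes (by symmetrisation plus a rerun of Lata\l{}a's argument for non-negative variables) the Lata\l{}a-type moment formula $\left\|\sum_i X_i\right\|_p = O\left(\max_{2\le s\le p}\frac{p}{s}\left(\frac{n}{p}\right)^{1/s}\max_i\|X_i\|_s\right)$ for independent, centred, not necessarily identically distributed variables, and then follows the optimisation over $s$ from~\cite[Lemma~2.8]{LM17}; under $\|X_i\|_s\le\kappa s^\alpha$ the maximum sits at the endpoints $s=2$ (the Gaussian term $\asymp 2^\alpha\kappa\sqrt{p\ell}$) and $s=p$ (the heavy-tail term), and the hypothesis $\ell\ge r^{\max\{2\alpha-1,1\}}$, together with the $e^{2\alpha}$ slack, is exactly what forces the $s=p$ endpoint below $Ce^{2\alpha}\kappa\sqrt{p\ell}$. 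Your direct partition expansion is the combinatorial shadow of the same computation --- your ``one merger costs a factor $\ell$ and gains at most $p^{2\alpha-1}$'' is the endpoint comparison in disguise --- and the per-profile ratio bound you announce is achievable: using $k!\ge(k/e)^k$ and the convexity of $t\mapsto t\log t$ to locate the worst profile at one large block plus pairs, the total contribution of all $m$-block partitions is at most $(Ce^{c\alpha})^p\left(p^{2\alpha-1}/\ell\right)^{p/2-m}$ times the pair term. Your approach buys self-containedness (no appeal to Lata\l{}a's theorem), at the price of working only for even integer $p$.

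Two steps need repair. First, the reduction to even $p$ is broken at the top of the range: the smallest even integer $p'\ge p$ satisfies $p'<p+2$, not $p'\le p+1$, and when $p\in(r-2,r]$ one may have $p'>r$, where the reasonability hypothesis gives no control whatsoever --- moments of order exceeding $r$ may be infinite, so no inflation of $C$ recovers $\|S\|_{p'}$. You must either prove the statement only for $p\le r-2$ (harmless for the paper's application after enlarging the reasonability window from $\log(en)$ to, say, $\log(e^3n)$), or interpolate differently; the paper's route via Lata\l{}a handles all real $p\in[2,r]$ directly and avoids this entirely. Second, you cannot sum ``over all non-pair partitions, of which there are at most $p^{O(p)}$'' against a per-term bound of the form $C^p$ times the pair term: the $p$-th root of $p^{O(p)}$ is $p^{O(1)}$, which destroys the $\sqrt{p}$ bound. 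The sum must be organised over block-size profiles (integer partitions of $p$), of which there are only $e^{O(\sqrt{p})}$, with the multinomial count $p!/\prod_k(k!)^{m_k}m_k!$ already folded into each profile's contribution; the resulting loss is then $O(1)$ after the $p$-th root.
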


\begin{remark}
Lemma~\ref{lem:sum-p-norm} is an adaptation of~\cite[Lemma~2.8]{LM17} originally formulated for 
random vectors with centred i.i.d. entries. To be rid of the latter assumption, start with proving that 
\begin{equation}\label{eq:Latala}
\left\|\sum_{i=1}^n X_i\right\|_p = O \left(\max\left\{\frac{p}{s}\left(\frac{n}{p} \right)^{1/s}\max_{i \in [n]} \|X_i\|_s: 2 \leq s \leq p \right\} \right)    
\end{equation}
holds, whenever $X_1,\ldots,X_n$ are independent centred random variables and $2 \leq p \leq r$. A standard application of symmetrisation\footnote{$\frac{1}{2}\left\|\sum_i X_i\right\|_p \leq \left\|\sum_i \xi_iX_i\right\|_p \leq 2\left\|\sum_i X_i\right\|_p$, with $(\xi_i)_{i \in [n]}$ a sequence of independent Rademacher random variables that are also independent of the sequence $(X_i)_{i \in [n]}$.}, allows one to further assume that the members of $(X_i)_{i\in[n]}$, seen in~\eqref{eq:Latala}, are all symmetric. The formulation of~\eqref{eq:Latala} thus obtained coincides with that seen for this equality in~\cite[Corollary~2]{Lat97} modulo an i.i.d. assumption present in the latter. As asserted in~\cite{Lat97}, the symmetric case of~\eqref{eq:Latala} readily follows from a proof of the latter under the assumption that the members of $(X_i)_{i \in[n]}$ are non-negative. A proof of the non-negative case of~\eqref{eq:Latala} that is devoid of an i.i.d. assumption is that seen for~\cite[Corollary~1]{Lat97} essentially verbatim and thus omitted. Equipped with~\eqref{eq:Latala}, the argument of~\cite[Lemma~2.8]{LM17} is then easily adapted as to remove its i.i.d. assumption leading to the formulation seen in Lemma~\ref{lem:sum-p-norm}.  
\end{remark}

\begin{proofof}{Lemma~\ref{lem::X-part}}
Define the random vector $\bv \in \R^n$, given by 
$$
\bv_j : = \frac{1}{\sqrt{m}} \sum_{i=1}^m \xi_i (\bx_i)_j.
$$
Conditioning over the Rademacher random variables $\bxi := (\xi_i)_{i\in [m]}$, the entries of $\bv$ become independent of one another. Note that
$$
\Ex \left\{\sup_{\bu \in \Sigma_s^2} \frac{1}{\sqrt{m}} \sum_{i=1}^m \xi_i \inner{\bx_i}{\bu} \mid \bxi \right\} = \Ex\left\{\sup_{\bu \in \Sigma_s^2} \inner{\bv}{\bu} \mid \bxi \right\} \overset{\eqref{eq:inner-re}}{=} \Ex\left\{\|\bv^*\|_{s,2} \mid \bxi\right\}.
$$

By assumption, $m \geq (\log (en))^{\max\{2\alpha-1,1\}}$ holds for any $s \in [n]$. Additionally, it is  assumed that $\|\xi_i (\bx_i)_j\|_p = \|(\bx_i)_j\|_p \leq \kappa p^\alpha$ holds, whenever $1 \leq p \leq \log (en)$. Noticing that $\bv$ conditioned on $\bxi$ forms a zero-mean vector with independent entries, Lemma~\ref{lem:sum-p-norm} (applied with $\ell = m$ and $r = \log (en)$) then asserts the existence of a constant $C>0$ for which 
$$
\|\bv_j\mid \bxi\|_p \leq Ce^{2\alpha}\kappa \sqrt{p}
$$
holds, whenever $2 \leq p \leq \log (en)$. It follows that the vector $\bv$, conditioned on a realisation of $\bxi$, satisfies the premise of Lemma~\ref{lem:2-norm-re}; the latter then asserts that 
$$
\Ex\left\{\|\bv^*\|_{s,2} \mid \bxi\right\} = O\left(e^{2 \alpha} \kappa \sqrt{s \log (en/s)} \right).
$$
The Law of Total Expectation then concludes the proof through
\begin{align*}
\Ex \left\{\sup_{\bu \in \Sigma_s^2} \frac{1}{\sqrt{m}} \sum_{i=1}^m \xi_i \inner{\bx_i}{\bu} \right\} & = \sum_{\br \in \{\pm 1\}^m} \Ex\left\{\sup_{\bu \in \Sigma_s^2} \frac{1}{\sqrt{m}} \sum_{i=1}^m \xi_i \inner{\bx_i}{\bu} \mid \bxi = \br\right\}  \Pr\left\{\bxi = \br \right\}\\
& = \sum_{\br \in \{\pm 1\}^m} \Ex \left\{\|\bv^*\|_{s,2} \mid \bxi = \br \right\}\Pr\left\{\bxi= \br \right\} \\
& = O\left(e^{2 \alpha} \kappa \sqrt{s \log (en/s)} \right).
\end{align*}
\end{proofof}

\subsection{Proof of Lemma~\ref{lem::Z-part}}\label{sec:Z-part}
Start by proving that
\begin{align*}
    \Ex\left\{\sup_{\bu \in \Sigma_s^2} \frac{1}{\sqrt{m}} \sum_{i=1}^m \xi_i \inner{\bmu_i}{\bu}\right\} & \overset{\phantom{\eqref{eq:inner-re}}}{\leq} 
    \sqrt{s\log n} \|\Ex\{A\}\|_{\infty}.
\end{align*}
Note that
\begin{align}
\Ex \left\{\sup_{\bu \in \Sigma_s^2} \frac{1}{\sqrt{m}} \sum_{i=1}^m \xi_i \inner{\bmu_i}{\bu} \right\}  & = \Ex \left\{\sup_{\bu \in \Sigma_s^2} \inner{\bu}{\frac{1}{\sqrt{m}} \sum_{i=1}^m \xi_i \bmu_i} \right\}\nonumber\\
& \leq \Ex \left\{\sup_{\bu \in \Sigma_s^2} \left|\inner{\bu}{\frac{1}{\sqrt{m}} \sum_{i=1}^m \xi_i \bmu_i}\right| \right\}\nonumber\\
& \leq \Ex \left\{\sup_{\bu \in \Sigma_s^2} \|\bu\|_1 \left\| \frac{1}{\sqrt{m}} \sum_{i=1}^m \xi_i \bmu_i\right\|_\infty \right\}\nonumber \\
& \leq \sqrt{s} \cdot \Ex \left\{ \max_{j \in [n]} \left|\frac{1}{\sqrt{m}} \sum_{i=1}^m \xi_i(\bmu_i)_j \right| \right\} \label{eq:sub-exp-Dudley-setup},
\end{align}
where the penultimate inequality is owing to H\"older's inequality and where the last inequality relies on $\|\bu\|_1 \leq \sqrt{s} \|\bu\|_2 = \sqrt{s}$ supported by the Cauchy-Schwartz inequality.  

Gearing up towards an application of~\eqref{eq:max-sub-gauss}, set the random variables 
$$
Z_j := \frac{1}{\sqrt{m}}\sum_{i=1}^m \xi_i (\bmu_i)_j,
$$
for every $j \in [n]$. Note that $Z_j$ is a sum of (scaled) independent Rademacher random variables and therefore is sub-Gaussian with the following sub-Gaussian norm 
\begin{equation}\label{eq:sub-exp-norm-Z}
\|Z_j\|_{\psi_2}^2  \overset{\eqref{eq:sub-gauss-sum}}{=} \frac{1}{m} \cdot O\left(\sum_{i=1}^m \|\xi_i (\bmu_i)_j\|_{\psi_2}^2 \right) 
= \frac{1}{m} \sum_{i=1}^m O\left( (\bmu_i)_j^2\|\xi_i\|^2_{\psi_2}\right) = O\left(\max_{i\in [m]} \|\Ex\{A\}\|_{\infty}^2 \right) 
\end{equation}
holds, where the last inequality holds since $\| \xi_i \|_{\psi_2}^2 \leq 1$ holds for every $i \in [m]$. 

Therefore, 
\begin{align*}
\Ex \left\{\sup_{\bu \in \Sigma_s^2} \frac{1}{\sqrt{m}} \sum_{i=1}^m \xi_i \inner{\bmu_i}{\bu} \right\} & \overset{\eqref{eq:sub-exp-Dudley-setup}}{\leq} \sqrt{s} \cdot \Ex \left\{ \max_{j \in [n]} \left|\frac{1}{\sqrt{m}} \sum_{i=1}^m \xi_i(\bmu_i)_j \right| \right\} \\
& \overset{\phantom{\eqref{eq:sub-exp-Dudley-setup}}}{=} \sqrt{s} \cdot \Ex \left\{ \max_{j \in [n]} |Z_j| \right\} \\
& \overset{\phantom{\eqref{eq:sub-exp-Dudley-setup}}}{=} \sqrt{s} \cdot 
O\left(\sqrt{\log n} \cdot \max_{\substack{i\in [m]\\j \in [n]}} \|\Ex\{A\}\|_{\infty} \right), 
\end{align*}
where for the last equality we rely on~\eqref{eq:max-sub-gauss} and~\eqref{eq:sub-exp-norm-Z}.

\bigskip
Moving on to the second bound in the definition of $F(A)$, start by writing
\begin{align*}
\Ex\left\{\sup_{\bu \in \Sigma_s^2} \frac{1}{\sqrt{m}} \sum_{i=1}^m \xi_i \inner{\bmu_i}{\bu}\right\} & \overset{\phantom{\eqref{eq:inner-re}}}{=} \Ex \left\{\sup_{\bu \in \Sigma_s^2} \frac{1}{\sqrt{m}} \sum_{i=1}^m \xi_i \sum_{j=1}^n (\bmu_i)_j \bu_j \right\}\\
& \overset{\phantom{\eqref{eq:inner-re}}}{=} \Ex \left\{\sup_{\bu \in \Sigma_s^2} \sum_{j=1}^n \left(\frac{1}{\sqrt{m}}\sum_{i=1}^m \xi_i(\bmu_i)_j \right)\bu_j \right\}\\
& \overset{\phantom{\eqref{eq:inner-re}}}{=} \Ex \left\{\sup_{\bu \in \Sigma_s^2} \inner{\bz}{\bu} \right\}\\
& \overset{\eqref{eq:inner-re}}{=} \Ex \left\{\|\bz^*\|_{2,s}\right\},
\end{align*}
where $\bz \in \R^n$ is given by 
$$
\bz_j := \frac{1}{\sqrt{m}}\sum_{i=1}^m \xi_i(\bmu_i)_j.
$$

As above, set $\bxi := (\xi_i)_{i\in [m]}$. For $j \in [n]$, let $\bpsi_j$ denote the $j$th column of $\Ex\{A\}$ and proceed to write 
\begin{align*}
\Ex \left\{\|\bz^*\|_{2,s}\right\} & = \Ex\left\{ \max_{\substack{S \subseteq [n]\\ |S|=s}}\sqrt{\sum_{j \in S} \left( \frac{1}{\sqrt{m}} \sum_{i=1}^m \xi_i (\bmu_i)_j \right)^2} \right\} \\
& = \Ex \left\{ \max_{\substack{S \subseteq [n]\\ |S|=s}} \sqrt{\frac{1}{m} \sum_{j\in S} \inner{\bxi}{\bpsi_j}^2}\right\} \\
& \leq \Ex \left\{ \max_{\substack{S \subseteq [n]\\ |S|=s}} \sqrt{\frac{1}{m} \sum_{j\in S} \|\bxi\|_2^2 \|\bpsi_j\|_2^2}\right\} \\
& = \Ex \left\{ \max_{\substack{S \subseteq [n]\\ |S|=s}} \sqrt{\sum_{j\in S} \|\bpsi_j\|_2^2} \right\} \\
& = \|\Ex\{A\}\|_{\frob,s},
\end{align*}
where the above inequality holds by the Cauchy-Schwartz inequality and the last equality follows by the definition of $\|\cdot\|_{\frob,s}$ and the fact that the quantity seen in the penultimate equality involves no randomness. 
\hfill\QED

\section{Concluding remarks and further research}

Our main result, namely Theorem~\ref{thm::main-perturb}, together its various corollaries, namely Corollaries~\ref{cor:classical}, ~\ref{cor:classical-perturb}, and~\ref{cor::main-perturb} collectively assert that fairly arbitrary matrices $M \in \R^{m \times n}$ can a.a.s. be mended, so to speak, into matrices satisfying $\ell_2$-RNSP once these are additively randomly perturbed entry-wise using a random matrix $R \in \R^{m\times n}$. A wide range of distributions for the entries of $R$ is considered, allowing the latter to be sub-gaussian, sub-exponential, or merely  reasonable (i.e. rather heavy-tailed). This in turn captures and generalises previous results seen in~\cite{SGR15,KJ16b,KJ16,LM17,SJC19,SJC19b}.

A natural next step following our results here is to consider NSPs in the multiplicative random perturbation model. More concretely, given a fixed matrix $M \in \R^{m \times n}$ as well as a random matrix $R \in \R^{n \times d}$, determine whether $MR$ a.a.s. satisfies $\ell_2$-RNSP; as in our work here, the nature of the distributions of the entries of $R$ is of prime interest. 

New subtleties and challenges arise in the multiplicative perturbation model. The kernels of $NM$ and $M$ coincide, whenever $N \in R^{m \times m}$ is non-singular; this in turn implies that NSP is preserved under left multiplication by a non-singular matrix. As~\cite[Exercise~4.2]{FR13} demonstrates, matrices $MD$, with $D$ a conformal non-singular diagonal matrix, need not satisfy NSP despite $M$ satisfying this property. A statistical study of the NSPs of random matrices $MR$, as above, probes into the ubiquity (or lack thereof) of the latter phenomenon. 

The multiplicative perturbation model introduces dependencies between the entries of $MR$ to such an extent that the applicability of Mendelson's method comes into question as the latter can no longer be applied to $MR$ directly as in the additive perturbation model. 

\bibliographystyle{amsplain} 
\bibliography{Lit}
 
\appendix

\section{Properties of distributions}\label{app:dist}

In this section, we collect various properties of sub-gaussian and sub-exponential distributions utilised throughout our arguments above; our account here, follows that of~\cite[Chapter~2]{Vershynin}. The {\em sub-gaussian norm} of a random variable $X$ is given by 
$$
\|X\|_{\psi_2} := \inf \left\{t >0: \Ex \left\{\exp\left(\frac{X^2}{t^2} \right)\right\} \leq 2\right\} 
$$
and its {\em sub-exponential} norm is defined to be
$$
\|X\|_{\psi_1} := \inf \left\{t >0: \Ex \left\{\exp\left(\frac{|X|}{t} \right)\right\} \leq 2\right\}. 
$$

% The observation  
% \begin{equation}\label{eq:sub-gauss-expec}
% \Ex \{X\} \leq |\Ex \{X\}| \leq \Ex \{|X|\} = \|X\|_1 = O(\|X\|_{\psi_2}),
% \end{equation}
% leads to sub-gaussianity being inherited through centering~\cite[Lemma~2.6.8]{Vershynin} in the sense of 
% \begin{equation}\label{eq:sub-gauss-centre} 
% \|X - \Ex \{X\}\|_{\psi_2} = O(\|X\|_{\psi_2}).
% \end{equation}
For a bounded random variable $X$,  
\begin{equation}\label{eq:sub-gauss-bounded}
\|X\|_{\psi_1},\|X\|_{\psi_2} = O(\|X\|_\infty)
\end{equation}
holds. By~\cite[Proposition~2.6.1]{Vershynin}, sub-gaussianity is inherited by sums of zero-mean independent random variables in the sense of 
\begin{equation}\label{eq:sub-gauss-sum}
\left\|\sum_{i=1}^n  X_i\right\|_{\psi_2}^2 = O\left(\sum_{i=1}^n \|X_i\|_{\psi_2}^2\right).
\end{equation}

Lastly, given a sequence $X_1,\ldots,X_n$ of sub-gaussian random variables, a well-known result, see e.g.~\cite[Exercise~2.5.10]{Vershynin}, asserts that 
\begin{equation}\label{eq:max-sub-gauss}
\Ex\left\{\max_{i \in [n]} |X_i| \right\} = O\left(\sqrt{\log n} \cdot \max_{i\in[n]}\left\{ \|X_i\|_{\psi_2}\right\} \right).
\end{equation}

\section{Adaptation of Mendelson's method:\\ Proof of Theorem~\ref{thm::Mendelson}}\label{app:Mendelson}

%In the proof of Theorem~\ref{thm:mendelson} we make use of the following known lemmas, which we present here without proof.
%
%\smallskip
%The Cauchy-Schwarz inequality is a well-known mathematical result, stated as follows.
%\begin{lemma}
%\label{lemma:CauchySchwarz}
%    Let $\bu,\bv\in\bbR^m$. Then,
%    \begin{align*}
%        \brk{\sum_{i=1}^m \bu_i \bv_i}^2 \leq \sum_{i=1}^m \bu_i \sum_{i=1}^m \bv_i.
%    \end{align*}
%\end{lemma}

Our proof of Theorem~\ref{thm::Mendelson} follows closely the account of Tropp~\cite{T15} for Mendelson's small-ball method~\cite[Theorem~5.4]{M15}. The following two results facilitate our proof of Theorem~\ref{thm::Mendelson}. The first is a standard consequence of symmetrisation that can be derived from~\cite[Lemma~6.3]{LT91}; see e.g. the proof of~\cite[Lemma~6.6]{LT91} as well as~\cite[Exercise~6.4.4]{Vershynin}.

\begin{lemma}\label{lem:sym}
Let $ X_1, X_2, \ldots, X_n $ be independent random variables  and let \( \mathcal{F} \subseteq \R^n \) be a set of functions (viewed as vectors). Then,  
\[
\Ex\left\{ \sup_{f \in \mathcal{F}} \left|  \sum_{i=1}^n \Big( f(X_i) - \Ex\left\{f(X_i)\right\} \Big) \right|\right\}
\leq 2 \, \Ex \left\{ \sup_{f \in \mathcal{F}} \left| \sum_{i=1}^n \xi_i f(X_i) \right|\right\},
\]
where $ (\xi_i)_{i \in [n]}$ are independent Rademacher random variables which are also independent of the sequence $\left( X_i\right)_{i \in [n]}$.
\end{lemma}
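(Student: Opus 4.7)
The plan is to carry out the classical symmetrisation argument via an independent (ghost) copy of the sample. Introduce random variables $X_1',\ldots,X_n'$, independent of $X_1,\ldots,X_n$ and such that $X_i'$ has the same distribution as $X_i$ for every $i \in [n]$. Let $\Ex'$ denote expectation with respect to $(X_i')_{i\in[n]}$ alone. Then $\Ex\{f(X_i)\} = \Ex'\{f(X_i')\}$, so that
\[
\sum_{i=1}^n \Big(f(X_i) - \Ex\{f(X_i)\}\Big) = \Ex'\left\{\sum_{i=1}^n \Big(f(X_i) - f(X_i')\Big) \right\}.
\]

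The first step is then to pull $\Ex'$ outside the supremum and absolute value via Jensen's inequality, yielding
\[
\Ex\left\{\sup_{f \in \mathcal{F}} \left|\sum_{i=1}^n \Big(f(X_i) - \Ex\{f(X_i)\}\Big)\right|\right\} \leq \Ex\,\Ex'\left\{\sup_{f \in \mathcal{F}} \left|\sum_{i=1}^n \Big(f(X_i) - f(X_i')\Big)\right|\right\}.
\]
The second (and central) step is to exploit the symmetry of the difference. Since the pairs $(X_i, X_i')$ are independent across $i$ and each pair has an exchangeable distribution, swapping $X_i$ with $X_i'$ on any subset $I \subseteq [n]$ leaves the joint distribution of the sequence $\big(f(X_i) - f(X_i')\big)_{i \in [n]}$ unchanged. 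Introducing a sequence $(\xi_i)_{i \in [n]}$ of independent Rademacher random variables, also independent of $(X_i, X_i')_{i \in [n]}$, one obtains the distributional identity
\[
\big(f(X_i) - f(X_i')\big)_{i\in[n]} \;\overset{d}{=}\; \big(\xi_i(f(X_i) - f(X_i'))\big)_{i\in[n]},
\]
uniformly in $f$; hence the supremum over $\mathcal{F}$ of the corresponding partial sums has the same distribution on either side.

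The final step is a triangle inequality on the randomised sum together with the fact that $(\xi_i f(X_i))_{i \in [n]}$ and $(\xi_i f(X_i'))_{i \in [n]}$ share the same distribution, giving
\[
\Ex\left\{\sup_{f \in \mathcal{F}} \left|\sum_{i=1}^n \xi_i\big(f(X_i) - f(X_i')\big)\right|\right\} \leq 2\,\Ex\left\{\sup_{f \in \mathcal{F}} \left|\sum_{i=1}^n \xi_i f(X_i)\right|\right\},
\]
concluding the proof upon chaining the inequalities. There is no genuine obstacle here; the only subtlety worth flagging is to ensure that the ghost copy, the original sample, and the Rademacher sequence are defined on a common probability space with the stated mutual independences, so that Fubini and the distributional identity used in the symmetrisation step are both justified without measurability issues over $\mathcal{F}$ (which is harmless since $\mathcal{F}\subseteq\R^n$ is a fixed set of vectors).
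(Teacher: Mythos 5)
Your proof is correct and is the standard ghost-sample symmetrisation argument (Jensen to pull in the independent copy, exchangeability to introduce the Rademacher signs, then the triangle inequality to split off the factor of $2$). The paper does not spell out a proof of Lemma~\ref{lem:sym} but defers to Ledoux--Talagrand and Vershynin, which use exactly this argument, so your route matches the one the paper invokes.
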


The second result facilitating our proof of Theorem~\ref{thm::Mendelson} is the so-called {\sl bounded differences} concentration inequality often dubbed as {\sl McDiarmid's inequality}, who was the first~\cite{M89} to phrase it in an effective and easy to apply form. A multivariate function $f: \mathcal{X}^n \to \R$ is said to possess the {\em bounded differences property} with non-negative parameters $c_1,\ldots,c_n$ provided 
$$
\sup_{z, y_1,\ldots,y_n \in \mathcal{X}} \Big|f(y_1,\ldots,y_n) - f(y_1,\ldots,y_{i-1},z,y_{i+1},\ldots,y_n) \Big| \leq c_i, 
$$
holds, whenever $ i \in [n]$.  

\begin{theorem}{~\cite[Theorem~6.2]{BLM13}}
\label{thm:BoundedDiff}
Let $f:\mathcal{X}^n \rightarrow \R$ be a function satisfying the bounded differences property with non-negative parameters $c_1,\ldots,c_n$ and let
    $$
        v := \frac{1}{4}\sum_{i=1}^n c_i^2.
    $$
    Let $X_1,\ldots, X_n$ be a sequence of independent random variables and set $Z:=f(X_1,\ldots,X_n)$. Then,
    for every $t>0$,
    $$
        \Pr\left\{Z \geq \Ex{Z} + t\right\}\leq e^{-t^2/2v} 
    $$
holds.
\end{theorem}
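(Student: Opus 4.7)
The plan is to establish this classical bounded differences inequality via the Doob martingale method, following the standard approach based on Azuma--Hoeffding. First, I would set up the Doob martingale $Z_i := \Ex\{Z \mid X_1, \ldots, X_i\}$ for $i = 0, 1, \ldots, n$, so that $Z_0 = \Ex\{Z\}$ and $Z_n = Z$; this yields the telescoping decomposition $Z - \Ex\{Z\} = \sum_{i=1}^n D_i$, where $D_i := Z_i - Z_{i-1}$ are the martingale differences with respect to the natural filtration $\mathcal{F}_i := \sigma(X_1,\ldots,X_i)$.

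Second, I would show that, conditionally on $\mathcal{F}_{i-1}$, the random variable $D_i$ lies in an interval of length at most $c_i$. For this, introduce the function
$$
\phi_i(x) := \Ex\left\{ f(X_1,\ldots,X_{i-1},x,X_{i+1},\ldots,X_n) \mid \mathcal{F}_{i-1} \right\},
$$
where the use of independence of the coordinates is crucial (it allows one to fix $x$ inside the expectation rather than having to also condition on $X_i$). The bounded differences hypothesis then yields $\sup_x \phi_i(x) - \inf_x \phi_i(x) \leq c_i$, and since $D_i = \phi_i(X_i) - \Ex\{\phi_i(X_i) \mid \mathcal{F}_{i-1}\}$, its conditional range is bounded by $c_i$ as claimed.

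Third, I would invoke Hoeffding's lemma conditionally: for any real random variable $Y$ supported in an interval of length $\ell$ with $\Ex\{Y\}=0$, one has $\Ex\{e^{\lambda Y}\} \leq \exp(\lambda^2 \ell^2 / 8)$. Applied to $D_i$ given $\mathcal{F}_{i-1}$, this yields $\Ex\{e^{\lambda D_i} \mid \mathcal{F}_{i-1}\} \leq \exp(\lambda^2 c_i^2 / 8)$. A routine iteration using the tower property then gives
$$
\Ex\left\{ \exp\left(\lambda (Z - \Ex\{Z\})\right) \right\} \leq \exp\!\left(\frac{\lambda^2}{8} \sum_{i=1}^n c_i^2\right) = \exp\!\left(\frac{\lambda^2 v}{2}\right).
$$
Finally, Markov's inequality (the Chernoff bound) yields $\Pr\{Z \geq \Ex\{Z\} + t\} \leq \exp(-\lambda t + \lambda^2 v/2)$ for every $\lambda > 0$; optimising at $\lambda = t/v$ delivers the stated tail bound $e^{-t^2/(2v)}$.

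The only delicate point is the second step, namely justifying the bound on the conditional range of $D_i$: one must carefully separate the dependence on $X_i$ (encapsulated in $\phi_i$) from the integration over the remaining coordinates, and this is precisely where independence of the $X_j$'s enters the argument. Every other step is essentially mechanical, so I do not anticipate a serious obstacle beyond bookkeeping.
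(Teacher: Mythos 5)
Your proof is correct: this is the standard Doob martingale/Azuma--Hoeffding argument for McDiarmid's bounded differences inequality, and each step (the telescoping martingale decomposition, the conditional range bound on the increments $D_i$ via independence and the auxiliary function $\phi_i$, Hoeffding's lemma applied conditionally, the tower-property chaining of the moment generating function, and the Chernoff optimisation at $\lambda = t/v$) is carried out properly. Note, however, that the paper itself does not prove this statement: Theorem~\ref{thm:BoundedDiff} is imported as a black-box citation of \cite[Theorem~6.2]{BLM13}, so there is no in-paper proof to compare against. For what it is worth, the proof in that reference proceeds via the entropy method (a modified logarithmic Sobolev inequality and the Herbst argument) rather than the martingale route you took; your approach is the more elementary and self-contained one, whereas the entropy-method proof situates the bounded differences inequality inside a broader framework that also handles self-bounding and related functionals uniformly.
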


\begin{proofof}{Theorem~\ref{thm::Mendelson}}
Given $\eps >0$, observe that 
\begin{equation}\label{eq:lower-bound-any-vector}
\left(\sum_{i=1}^m \inner{\bphi_i}{\bu}^2 \right)^{1/2} \geq \frac{1}{\sqrt{m}} \sum_{i=1}^m \left|\inner{\bphi_i}{\bu} \right| \geq \frac{\eps}{\sqrt{m}} \sum_{i=1}^m \ind\left\{\left|\inner{\bphi_i}{\bu} \right| \geq \eps\right\}
\end{equation}
holds for any $\bu \in \R^n$, where the first inequality relies on the Cauchy-Schwartz inequality and the second inequality relies on the fact that 
$
\left|\inner{\bphi_i}{\bu} \right|\geq \eps \cdot \ind\left\{\left|\inner{\bphi_i}{\bu} \right| \geq \eps \right\}
$ holds. 
Set $\tilde Q_{2\eps} (\bphi_i,\bu) := \Pr \left\{\left|\inner{\bphi_i}{\bu} \right| \geq 2\eps \right\} $ and write 
\begin{align}
\inf_{\bu \in E} &\left(\sum_{i=1}^m \inner{\bphi_i}{\bu}^2 \right)^{1/2} \nonumber \\
& \overset{\eqref{eq:lower-bound-any-vector}}{\geq} \inf_{\bu \in E} \left(\frac{\eps}{\sqrt{m}} \sum_{i=1}^m \tilde Q_{2\eps}(\bphi_i,\bu) - \frac{\eps}{\sqrt{m}} \sum_{i=1}^m \Big(\tilde Q_{2\eps}(\bphi_i,\bu) - \ind\left\{\left|\inner{\bphi_i}{\bu} \right|\geq \eps \right\} \Big) \right)\nonumber \\
& \overset{\eqref{eq:Q}}{\geq} \eps \sqrt{m} Q_{2\eps}(E; \bphi_1,\ldots,\bphi_m) - \frac{\eps}{\sqrt{m}} \sup_{\bu \in E} \sum_{i=1}^m \Big(\tilde Q_{2\eps}(\bphi_i,\bu) - \ind\left\{ \left|\inner{\bphi_i}{\bu} \right| \geq \eps \right\} \Big). \label{eq:initial-lower-bound}
\end{align}

Focusing on the second term appearing on the right hand side of~\eqref{eq:initial-lower-bound}, given $i \in [m]$ and $\bu \in E$, define the random variable 
$$
X_{i,\bu} := \tilde Q_{2\eps}(\bphi_i,\bu) - \ind\left\{\left|\inner{\bphi_i}{\bu} \right|\geq \eps\right\}
$$
and note that $X_{i,\bu} \in \left\{\tilde Q_{2\eps}(\bphi_i,\bu)-1,\tilde Q_{2\eps}(\bphi_i,\bu)\right\} \subseteq [-1,1]$ always holds. The expression $\sup_{\bu \in E} \sum_{i=1}^m X_{i,\bu}$ can be reproduced through a function satisfying the bounded differences property with parameters $0 \leq c_1,\ldots, c_m \leq 2$. Hence, for any $t \geq 0$, Theorem~\ref{thm:BoundedDiff} yields  
$$
\Pr \left\{ \sup_{\bu \in E} \sum_{i=1}^m X_{i,\bu} \leq \Ex \left\{ \sup_{\bu \in E} \sum_{i=1}^m X_{i,\bu} \right\} + \sqrt{m} \cdot t\right\} \geq 1 -\exp\left(-t^2/2 \right);
$$ 
more explicitly this asserts that 
\begin{align}
\sup_{\bu \in E}\sum_{i =1}^m \Big(\tilde Q_{2\eps}(\bphi_i,\bu) -&  \ind \left\{\left|\inner{\bphi_i}{\bu}\right|\geq \eps \right\} \Big) \nonumber \\ 
& \leq \Ex \left\{\sup_{\bu \in E}\sum_{i =1}^m \Big(\tilde Q_{2\eps}(\bphi_i,\bu) - \ind \left\{\left|\inner{\bphi_i}{\bu}\right|\geq \eps \right\} \Big) \right\} + \sqrt{m}\cdot t \label{eq:post-bounded-diff}
\end{align}
holds with probability at least $1 -\exp\left(-t^2/2 \right)$ whenever $t \geq 0$. 

The function $\psi_\eps: \R \to [0,1]$ given by 
$$
\psi_\eps(s) := 
\begin{cases}
0, & |s| \leq \eps,\\
(|s|-\eps)/\eps, & \eps < |s| \leq 2\eps,\\
1, & |s| >2\eps
\end{cases}
$$ 
is introduced in order to upper bound the expectation seen on the right hand side of~\eqref{eq:post-bounded-diff}. Prior to doing so, we record two features that this function satisfies. The first is that $\psi_\eps(\cdot)$ acts as a {\em soft indicator} function, by which we mean that
\begin{equation}\label{eq:soft}
\ind\left\{|s| \geq 2 \eps \right\} \leq \psi_\eps(s) \leq \ind \left\{|s| \geq \eps \right\}. 
\end{equation}
The second trait supported by $\psi_\eps(\cdot)$ is that its scaling $\eps\psi_\eps(\cdot)$ acts as a {\em contraction} in the sense that 
$$
|\eps\psi_\eps(x) - \eps \psi_\eps(y)| \leq |x-y|
$$
holds for every $x$ and $y$; this in turn implies that 
\begin{equation}\label{eq:contract}
\eps \psi_\eps(x) \leq |x|
\end{equation}
always holds.

Proceeding to upper bound the expectation seen on the right hand side of~\eqref{eq:post-bounded-diff} we obtain  
\begin{align}
\Ex \Big\{ \sup_{\bu \in E}\sum_{i =1}^m \Big(\tilde Q_{2\eps}(\bphi_i,\bu) - &\ind \left\{\left|\inner{\bphi_i}{\bu}\right|\geq \eps \right\} \Big) \Big\}  \nonumber \\ 
& \overset{\phantom{\eqref{eq:soft}}}{=} \Ex \left\{\sup_{\bu \in E}\sum_{i =1}^m \Big(\Ex \left\{\ind\left\{\left|\inner{\bphi_i}{\bu} \right|\geq 2\eps\right\} \right\} - \ind \left\{\left|\inner{\bphi_i}{\bu}\right|\geq \eps \right\} \Big) \right\} \nonumber \\
& \overset{\eqref{eq:soft}}{\leq} \Ex \left\{\sup_{\bu \in E}\sum_{i =1}^m \Big(\Ex \left\{\psi_\eps\left(\inner{\bphi_i}{\bu} \right) \right\} - \psi_\eps\left(\inner{\bphi_i}{\bu} \right) \Big) \right\} \nonumber\\
& \overset{\phantom{\eqref{eq:soft}}}{\leq} \Ex \left\{\sup_{\bu \in E}\Big|\sum_{i =1}^m  \Ex \left\{\psi_\eps\left(\inner{\bphi_i}{\bu} \right) \right\} - \psi_\eps\left(\inner{\bphi_i}{\bu} \right) \Big| \right\} \nonumber \\
& \overset{\phantom{\eqref{eq:soft}}}{\leq} 2 \Ex \left\{\sup_{\bu \in E} \left|\sum_{i=1}^m \xi_i \psi_\eps \left(\inner{\bphi_i}{\bu} \right) \right| \right\} \nonumber\\
& \overset{\eqref{eq:contract}}{\leq} \frac{2}{\eps} \Ex \left\{\sup_{\bu \in E} \left|\sum_{i=1}^m \xi_i \inner{\bphi_i}{\bu} \right| \right\},\label{eq:exp-bound-final}
\end{align}
where the first equality is supported by 
$\tilde Q_{2\eps}(\bphi_i,\bu)= \Ex \left\{\ind\left\{\left|\inner{\bphi_i}{\bu} \right|\geq 2\eps\right\} \right\}$, and 
the penultimate inequality is owing to Lemma~\ref{lem:sym} which introduces the sequence $(\xi_i)_{i \in [m]}$ of independent Rademacher random variables. For the last inequality, beyond an appeal to~\eqref{eq:contract} we also rely on the fact that $\sup_{\bu \in E} \left|\sum_{i=1}^m \xi_i \inner{\bphi_i}{\bu} \right|$ and $\sup_{\bu \in E} \left|\sum_{i=1}^m \xi_i \left|\inner{\bphi_i}{\bu}\right| \right|$ have the same distribution and thus, in particular, the same expectation.

Prior to concluding the proof, recall that $\bh = \frac{1}{\sqrt{m}} \sum_{i=1}^m \xi_i \bphi_i$ so that 
$$
\frac{1}{\sqrt{m}} \sum_{i=1}^m \xi_i \inner{\bphi_i}{\bu} = \frac{1}{\sqrt{m}} \sum_{i=1}^m \xi_i \sum_{j=1}^n \bu_j(\bphi_i)_j = \sum_{j=1}^n\left(\sum_{i=1}^m \frac{1}{\sqrt{m}}\xi_i\bphi_i \right)_j\bu_j = \sum_{j=1}^n \bh_j \bu_j = \inner{\bh}{\bu}.
$$   
Hence, for any $t \geq 0$, with probability at least $1- \exp\left(-t^2/2\right)$ it holds that  
\begin{align*}
\inf_{\bu \in E} \Big(\sum_{i=1}^m \inner{\bphi_i}{\bu}^2 \Big)^{1/2} &\overset{\phantom{\eqref{eq:W}}}{\geq} \eps \sqrt{m} Q_{2\eps}(E;\bphi_1,\ldots,\bphi_m) - \frac{2}{\sqrt{m}} \Ex \left\{ \sup_{\bu \in E} \left|\sum_{i=1}^m \xi_i \inner{\bphi_i}{\bu} \right| \right\} - \eps t \\
& \overset{\phantom{\eqref{eq:W}}}{=} \eps \sqrt{m} Q_{2\eps}(E;\bphi_1,\ldots,\bphi_m) - 2 \Ex \left\{\sup_{\bu \in E} \left|\inner{\bh}{\bu} \right| \right\} - \eps t\\
& \overset{\eqref{eq:W}}{=} \eps \sqrt{m} Q_{2\eps}(E;\bphi_1,\ldots,\bphi_m) - 2 W(E;\bphi_1,\ldots,\bphi_m) - \eps t
\end{align*} 
where the first inequality is owing to~\eqref{eq:initial-lower-bound},~\eqref{eq:post-bounded-diff}, and~\eqref{eq:exp-bound-final}.
\end{proofof}  

\section{Proof of Lemma~\ref{lem:2-norm-re}}\label{app:2-norm-re}
We follow the account of~\cite[Lemma~6.5]{M15}. Given $s \in [n]$, it suffices to prove that
\begin{equation}\label{eq:2nd-moment}
\Ex \left\{(\by^*_i)^2 \right\} = O\left(\kappa^2 \log(en/i) \right)    
\end{equation}
holds, whenever $i \in [s]$. Indeed, given~\eqref{eq:2nd-moment}, we may proceed to write
$$
\Ex\left\{\|\by^*\|_{2,s} \right\} = \Ex \left\{\sqrt{\sum_{i=1}^s (\by^*_i)^2}\right\} \leq \sqrt{\Ex\left\{\sum_{i=1}^s (\by^*_i)^2\right\}} = \sqrt{\sum_{i=1}^s \Ex\left\{(\by^*_i)^2\right\}} \overset{\eqref{eq:2nd-moment}}{=} O\left(\kappa \sqrt{s \log(en/s)} \right),
$$
as required, where the above inequality holds by Jensen's inequality for concave functions, and to support the last equality we use Stirling approximation as to write
\begin{align*}
\sum_{i=1}^s \log(en/i) & =s\log(en) - \sum_{i=1}^s \log(i) = s\log(en) - \log(s!) \leq s \log(en) - s \log s + s = s \log(en/s) +s\\ 
& = O(s \log(en/s)).
\end{align*}

\medskip
To see that~\eqref{eq:2nd-moment} holds, commence with noticing that for every $t > 0$, every $i \in [s]$, and every $2 \leq p \leq \log (en)$, it holds that
\begin{align}
\Pr\left\{(\by^*_i)^2 \geq t \right\} & \leq \sum_{\substack{S \subseteq [n]\\ |S| = i}} \prod_{j \in S}\Pr\left\{ |\by_j|^2 \geq t\right\} \nonumber \\
& = \sum_{\substack{S \subseteq [n]\\ |S| = i}} \prod_{j \in S} \Pr\left\{ |\by_j|^p \geq t^{p/2}\right\} \nonumber \\
& \leq \sum_{\substack{S \subseteq [n]\\ |S| = i}} \prod_{j \in S} \frac{\|\by_i\|_p^p}{(\sqrt{t})^p} \nonumber \\
& \leq \binom{n}{i} \left(\frac{\kappa \sqrt{p}}{\sqrt{t}} \right)^{ip}, \label{eq:tail}
\end{align}
where the penultimate inequality holds by Markov's inequality, and the last inequality holds by the premise of the lemma.

For $i \in [s]$ for which $p := \lfloor \log(en/i)\rfloor \geq 2$ holds (i.e. $i \leq n/e$), and for $t = e^4 u \kappa^2 \log(en/i)$ with $u \geq 1$, we may proceed to write
\begin{equation}\label{eq:specific-tail}
\Pr\left\{(\by^*_i)^2 \geq e^4 u \kappa^2 \log(en/i) \right\}  \overset{\eqref{eq:tail}}{\leq} \left(\frac{en}{i}\right)^i \left(\frac{\kappa \sqrt{\log (en/i)}}{e^2 \sqrt{u} \kappa \sqrt{\log (en/i)}}\right)^{i \log (en/i)} \leq \left(\frac{1}{u}\right)^{i \log(en/i)/2}.
\end{equation}
For $i \in [s]$ such that $\lfloor \log(en/i) \rfloor =1$ (i.e. $i > n/e$), utilise~\eqref{eq:tail} with $p=2$ to obtain 
\begin{equation}\label{eq:specific-tail-2}
\Pr\left\{(\by^*_i)^2 \geq e^4 u \kappa^2 \log(en/i) \right\} \leq 2^n \left(\frac{2\kappa^2}{e^4 u \kappa^2 \log(en/i)} \right)^{i} \leq \left(\frac{1}{u}\right)^{n/e}.
\end{equation}
Then, for $i \leq n/e$, we have 
\begin{align*}
\Ex\left\{(\by^*_i)^2\right\} & \overset{\phantom{\eqref{eq:specific-tail}}}{=} \int_0^\infty \Pr\left\{ (\by^*_i)^2 \geq t\right\}\; \mathrm{d}t\\
& \overset{\phantom{\eqref{eq:specific-tail}}}{=} \int_0^{e^4 \kappa^2 \log(en/i)} \Pr\left\{ (\by^*_i)^2 \geq t\right\}\; \mathrm{d}t + \int_{e^4 \kappa^2 \log(en/i)}^\infty \Pr\left\{ (\by^*_i)^2 \geq t\right\}\; \mathrm{d}t\\
& \overset{\phantom{\eqref{eq:specific-tail}}}{=} O\left( \kappa^2 \log(en/i)\right)\left(1 + \int_1^\infty \Pr\left\{ (\by^*_i)^2 \geq e^4 u \kappa^2 \log(en/i)\right\}\; \mathrm{d}u\right)\\
& \overset{\eqref{eq:specific-tail}}{\leq} O\left( \kappa^2 \log(en/i)\right)\left(1 + \int_1^\infty \left(\frac{1}{u}\right)^{i \log(en/i)/2}\; \mathrm{d}u \right) \\
& \overset{\phantom{\eqref{eq:specific-tail}}}{=} O\left(\kappa^2 \log(en/i)\right),
\end{align*}
where for the last equality we rely on $i \log (en/i)/2 \geq 2$ holding which in turn compels the convergence $\int_1^\infty u^{-2}\;\mathrm{d}u = 1$. The required inequality $i \log (en/i)/2 \geq 2$ can be rewritten to read $n \geq i\cdot \exp(-1 + 4/i)$ which holds for sufficiently large $n$ since $i \leq s \leq n$. 

Similarly, for $i > n/e$, write 
\begin{align*}
\Ex\left\{(\by^*_i)^2\right\} & \overset{\phantom{\eqref{eq:specific-tail-2}}}{=} \int_0^\infty \Pr\left\{ (\by^*_i)^2 \geq t\right\}\; \mathrm{d}t\\
& \overset{\phantom{\eqref{eq:specific-tail-2}}}{=} \int_0^{e^4 \kappa^2 \log(en/i)} \Pr\left\{ (\by^*_i)^2 \geq t\right\}\; \mathrm{d}t + \int_{e^4 \kappa^2 \log(en/i)}^\infty \Pr\left\{ (\by^*_i)^2 \geq t\right\}\; \mathrm{d}t\\
& \overset{\phantom{\eqref{eq:specific-tail-2}}}{=} O\left( \kappa^2 \log(en/i)\right)\left(1 + \int_1^\infty \Pr\left\{ (\by^*_i)^2 \geq e^4 u \kappa^2 \log(en/i)\right\}\; \mathrm{d}u\right)\\
& \overset{\eqref{eq:specific-tail-2}}{\leq} O\left( \kappa^2 \log(en/i)\right)\left(1 + \int_1^\infty \left(\frac{1}{u}\right)^{n/e}\; \mathrm{d}u \right) \\
& \overset{\phantom{\eqref{eq:specific-tail-2}}}{=} O\left(\kappa^2 \log(en/i)\right).
\end{align*}
%where for the last equality we rely on $en/2 \geq 1+\eps$ which is supported by the assumption $n\geq s \cdot \exp(\eps)$ appearing in the premise of the lemma. 
\hfill\QED
\end{document}